\documentclass[11pt,]{article}
\usepackage{color}
 \setlength{\textwidth}{14.7cm} 
\setlength{\hoffset}{-0.9cm}
\usepackage{amssymb,amsmath}
\usepackage{amsthm}
\usepackage{hyperref}
\usepackage{mathrsfs}
\usepackage{textcomp}
\usepackage{verbatim}
\usepackage{marginnote}

\usepackage{sectsty}

\hypersetup{
    colorlinks,%
    citecolor=black,%
    filecolor=black,%
    linkcolor=black,%
    urlcolor=black
}

\makeatletter

\newdimen\bibspace
\setlength\bibspace{0pt}   
\renewenvironment{thebibliography}[1]{%
 \section*{\refname 
       \@mkboth{\MakeUppercase\refname}{\MakeUppercase\refname}}%
     \list{\@biblabel{\@arabic\c@enumiv}}%
          {\settowidth\labelwidth{\@biblabel{#1}}%
           \leftmargin\labelwidth
           \advance\leftmargin\labelsep
           \itemsep\bibspace
           \parsep\z@skip     %
           \@openbib@code
           \usecounter{enumiv}%
           \let\p@enumiv\@empty
           \renewcommand\theenumiv{\@arabic\c@enumiv}}%
     \sloppy\clubpenalty4000\widowpenalty4000%
     \sfcode`\.\@m}
    {\def\@noitemerr
      {\@latex@warning{Empty `thebibliography' environment}}%
     \endlist}

\makeatother

\makeatletter

\newtheorem{trm}{Theorem}
\newtheorem*{trma}{Theorem A}
\newtheorem*{trmb}{Theorem B}
\newtheorem*{trmc}{Theorem C}
\newtheorem{prop}[trm]{Proposition}
\newtheorem{cor}[trm]{Corollary}
\newtheorem{rem}[trm]{Remark}
\newtheorem{lemma}[trm]{Lemma}
\newtheorem{defin}[trm]{Definition}

\def\Xint#1{\mathchoice
  {\XXint\displaystyle\textstyle{#1}}%
  {\XXint\textstyle\scriptstyle{#1}}%
  {\XXint\scriptstyle\scriptscriptstyle{#1}}%
  {\XXint\scriptscriptstyle\scriptscriptstyle{#1}}%
  \!\int}
\def\XXint#1#2#3{{\setbox0=\hbox{$#1{#2#3}{\int}$}
  \vcenter{\hbox{$#2#3$}}\kern-.5\wd0}}

\def\dashint{\Xint-}

\newcommand{\R}[1]{\mathbb{R}^{#1}}

\newcommand{\de}{\partial}
\newcommand{\ve}{\varepsilon}
\newcommand{\M}[1]{\mathcal{#1}}

\newcommand{\ud}{\mathrm{d}}

\DeclareMathOperator{\loc}{loc}

\title{\textbf{Existence and asymptotics for solutions of a non-local Q-curvature equation in dimension three}}
\author{Tianling Jin \and  Ali Maalaoui \and Luca Martinazzi \and  Jingang Xiong}

\begin{document}
\maketitle

\begin{abstract}
We study conformal metrics on $\R{3}$, i.e., metrics of the form $g_u=e^{2u}|dx|^2$, which have constant $Q$-curvature and finite volume. This is equivalent to studying the non-local equation
\begin{equation*}
(-\Delta)^\frac32 u = 2 e^{3u}\text{ in }\R{3},\quad V:=\int_{\R{3}}e^{3u}dx<\infty,
\end{equation*}
where $V$ is the volume of $g_u$. Adapting a technique of A. Chang and W-X. Chen to the non-local framework, we show the existence of a large class of such metrics, particularly for $V\le 2\pi^2=|\mathbb{S} ^3|$. Inspired by previous works of C-S. Lin and L. Martinazzi, who treated the analogue cases in even dimensions, we classify such metrics based on their behavior at infinity.
\end{abstract}

\section{Introduction}
In this paper, we study existence and asymptotics for solutions of
\begin{equation}\label{eq0}
(-\Delta)^\frac{3}{2} u =2e^{3u}\quad \text{in }\R{3}
\end{equation}
with
\begin{equation}\label{area}
V:=\int_{\R{3}}e^{3u}\,\ud x<\infty,
\end{equation}
where $(-\Delta)^\frac{3}{2}$ is interpreted as $(-\Delta)^\frac{1}{2}\circ(-\Delta)$. To define $(-\Delta)^\frac{1}{2} v$ for a function $v$ in $\R{3}$, we require throughout the paper that 
$$v\in L_{1/2}(\R{3}):=\left\{v\in L^1_{\loc}(\R{3}):\int_{\R{3}}\frac{|v(x)|}{1+|x|^4}dx<\infty \right\},$$
which makes $(-\Delta)^\frac{1}{2} v$ be a tempered distribution (see \cite{Sil}).

\begin{defin}\label{deflapl}
Given a tempered distribution $f$ in $\R{3}$, we say that $u$ is a solution of $(-\Delta)^{\frac32} u=f$ if $u\in W^{2,1}_{loc}(\R{3})$, $\Delta u\in L_{1/2}(\R{3})$ and
\begin{equation}\label{deffrac}
\int_{\R{3}} (-\Delta u)\,  (-\Delta)^\frac12\varphi  dx=\langle f ,\varphi\rangle\quad \text{for every }\varphi\in\mathcal{S}(\R{3}),
\end{equation}
where $\mathcal{S}(\R{3})$ is the Schwarz space of rapidly decreasing smooth functions in $\R{3}$.
\end{defin}
Note that the LHS of \eqref{deffrac} is finite since $\Delta u\in L_{1/2}(\R{3})$ and \eqref{stimaphi} holds.

Equation \eqref{eq0} is a prescribed $Q$-curvature equation, in the sense that if a smooth function $u$ solves
$$(-\Delta)^\frac{3}{2}u=Ke^{3u}\quad\text{in }\R{3}$$
for some function $K$, then the metric $g_u:=e^{2u}|dx|^2$ (which is a conformal perturbation of the Euclidean metric $|dx|^2$) has $Q$-curvature $K$, see e.g. \cite{cha}, \cite{CY2} or \cite{GZ} and the references therein. Moreover, the quantity $V$ appearing in \eqref{area} is simply the volume of $g_u$.

Problem \eqref{eq0}-\eqref{area} is the three dimensional case of the problem
\begin{equation}\label{eqn}
(-\Delta)^\frac{n}{2}u=(n-1)! e^{nu}\text{ in }\R{n},\quad V:=\int_{\R{n}}e^{nu}dx<\infty,
\end{equation}
which has been received considerable attentions, particularly in the case $n$ even.
It is well-known that the function $w_0(x):=\log\left(\frac{2}{1+|x|^2}\right)$ is a solution of \eqref{eqn} with $V=|\mathbb{S}^n|$ for any $n\ge 1$. Indeed, $w_0$ has the following geometric interpretation: If $\pi: \mathbb{S}^n\setminus \{p\}\to\R{n}$ is the stereographic projection from the sphere $\mathbb{S}^n=\{x\in \R{n+1}:|x|=1\}$ minus the south pole $p$ given by
$$\pi(x',x_{n+1})=\frac{x'}{1+x_{n+1}},\quad x'=(x_1,\dots, x_n)$$
and $g_0$ is the round metric on $\mathbb{S}^n$, then
$$(\pi^{-1})^* g_0= e^{2w_0}|dx|^2.$$
Applying the M\"obius transformations (translations and dilations) to $w_0$ (or to $(\pi^{-1})^* g_0$ to be more precise), we obtain the functions
\begin{equation}\label{wspherical}
w_{x_0,\lambda}(x)=\log\left(\frac{2\lambda}{1+\lambda^2|x-x_0|^2}\right),\quad x_0\in\R{n},\quad \lambda>0,
\end{equation}
which also solve \eqref{eqn} with $V=|\mathbb{S}^n|$. Because of their geometric origin, they can be called \emph{spherical} solutions.

In dimension $2$, where \eqref{eqn} reduces to $-\Delta u=e^{2u}$, it was proven by Chen-Li \cite{CL} that all solutions of \eqref{eqn} are spherical. Things are different in higher dimensions as shown by A. Chang and W-X. Chen \cite{CC}.

\begin{trma}[\cite{CC}]\label{thm:thma}
For every $n\ge 4$ \emph{even} and $V\in (0,|\mathbb{S}^{n}|)$
there exists a (non-spherical) solution $u\in C^\infty(\R{n})$ of \eqref{eqn}.
\end{trma}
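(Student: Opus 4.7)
The plan is to construct \emph{radial} non-spherical solutions following Chang--Chen's scheme, parametrised by their logarithmic decay rate $\alpha$ at infinity, and then let $\alpha$ sweep out the admissible range to realise every prescribed volume $V\in(0,|\mathbb{S}^n|)$.

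The starting point is that in even dimension $n$ the operator $(-\Delta)^{n/2}$ admits the explicit fundamental solution $\Gamma(x)=c_n\log(1/|x|)$, so any well-decaying solution of \eqref{eqn} must satisfy the integral representation
\begin{equation*}
u(x)=\frac{(n-1)!}{c_n}\int_{\R{n}}\log\frac{|y|+1}{|x-y|}\,e^{nu(y)}\,\ud y+C_\alpha,
\end{equation*}
where $C_\alpha$ is an additive constant to be chosen at the end. I would work with this integral equation rather than with the PDE itself.

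For each $\alpha>2$ I would fix a smooth radial profile $u_\alpha$ with $u_\alpha(x)=-\alpha\log|x|$ for $|x|\ge 1$ (extended smoothly inside the unit ball), so that $e^{nu_\alpha}\in L^1(\R{n})$ with strict integrability gained from the condition $\alpha>2$. Writing the candidate solution as $u=u_\alpha+\phi$, the integral equation is recast as a fixed-point equation $\phi=T_\alpha(\phi)$. The core step is to solve this in a Banach space of radial continuous functions weighted at infinity, for instance $X_\sigma=\{\phi\in C^0(\R{n}):\|(1+|x|)^\sigma\phi\|_\infty<\infty\}$ with $\sigma\in(0,n(\alpha-2))$, by showing that $T_\alpha$ maps a small closed ball of $X_\sigma$ into itself and is a contraction there. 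Both properties reduce to pointwise estimates on the logarithmic kernel tested against $e^{nu_\alpha}(1+O(\phi))$; the margin provided by $\alpha>2$ is what makes the estimates close. This is the main technical obstacle of the proof, as one must simultaneously keep $\phi$ small enough not to destroy the prescribed asymptotic $-\alpha\log|x|$ while gaining enough decay to run the contraction.

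Once $\phi_\alpha$ is produced I would verify that $u=u_\alpha+\phi_\alpha$ indeed solves \eqref{eqn} distributionally (using the fundamental-solution identity for $\Gamma$ together with standard elliptic regularity to promote the solution to $C^\infty$), and then study the continuous map $\alpha\mapsto V(\alpha)=\int_{\R{n}} e^{n(u_\alpha+\phi_\alpha)}\,\ud x$. Heuristically $V(\alpha)\to|\mathbb{S}^n|$ as $\alpha\to 2^+$ and $V(\alpha)\to 0$ as $\alpha\to\infty$, so by continuity the range of $V$ covers all of $(0,|\mathbb{S}^n|)$; a secondary difficulty lies in making these two limits quantitative. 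The solutions produced are automatically non-spherical, since their asymptotic rate $-\alpha\log|x|$ at infinity differs from the spherical rate $-2\log|x|$.
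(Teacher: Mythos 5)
Your plan has a structural flaw that no amount of work on the contraction estimates can repair. In even dimensions the classification results of C.-S.\ Lin ($n=4$) and Martinazzi (general even $n$) --- the even-dimensional analogues of Theorems \ref{clas1} and \ref{clas2} of this paper --- show that every solution of \eqref{eqn} decomposes as $u=v+p$, where $v$ is the Newtonian-type potential of $(n-1)!e^{nu}$ and $p$ is a polynomial of degree $0$ or $2$, bounded above. Moreover $v$ has the \emph{forced} logarithmic rate $\alpha=\tfrac{2V}{|\mathbb{S}^n|}$: if $\deg p=0$, i.e.\ $u$ satisfies the pure integral equation you propose to solve, then by the Wei--Xu/Lin classification $u$ is necessarily spherical, hence $\alpha=2$ and $V=|\mathbb{S}^n|$. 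In other words, the fixed-point equation $u=\Gamma*(n-1)!e^{nu}+\text{const}$ has \emph{only} the spherical solutions; every non-spherical solution has $\deg p=2$, so it behaves like $-a|x|^2$ at infinity, not like $-\alpha\log|x|$. Prescribing a logarithmic profile $u_\alpha(x)=-\alpha\log|x|$ and correcting by a decaying $\phi\in X_\sigma$ can therefore never reach a non-spherical solution. (A secondary sign error: since $\alpha=2V/|\mathbb{S}^n|$, logarithmic asymptotics with $V<|\mathbb{S}^n|$ would require $\alpha\in(1,2)$, not $\alpha>2$; but this is moot, since no constant-$Q$-curvature solution with logarithmic decay and $\alpha\ne2$ exists.)

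The actual Chang--Chen mechanism, which this paper adapts to $n=3$ in Theorem \ref{trmexist}, is designed precisely to create the quadratic polynomial part your ansatz excludes. One first proves a more flexible existence theorem: for a positive $K$ decaying at infinity, the prescribed-curvature problem $(-\Delta)^{n/2}w=Ke^{nw}$ with $\int Ke^{nw}=2(1-\mu)|\mathbb{S}^n|$ has a solution, obtained variationally by pulling back to $\mathbb{S}^n$ via stereographic projection, minimizing a functional involving the GJMS intertwining operator $P^n_{g_0}$, and using Beckner's sharp Moser--Trudinger inequality for coercivity together with compactness of the exponential map for the existence of a minimizer. The crucial step is then to \emph{choose} $K(x)=(n-1)!e^{-na|x|^2}$; because $(-\Delta)^{n/2}$ annihilates polynomials of degree $\le n-1$, the function $u:=w-a|x|^2$ solves the constant-curvature equation \eqref{eqn}, and the subtracted quadratic is exactly the degree-$2$ polynomial $p$ that makes $u$ non-spherical. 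Sweeping $\mu\in(0,1)$ then realizes every $V=(1-\mu)|\mathbb{S}^n|\in(0,|\mathbb{S}^n|)$. Your proposal contains no device for introducing this quadratic term, and that is the missing idea.
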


The restriction to $n$ even in Theorem A is essentially technical: for $n$ odd the operator $(-\Delta)^\frac{n}{2}$ is non-local and several difficulties arise. On the other hand, we will show, at least in dimension $3$ that the arguments in \cite{CC} can be adapted to the non-local setting.

\begin{trm}\label{nonstandard}
For every $V\in (0,2\pi^2)$, Problem \eqref{eq0}-\eqref{area} has at least one solution (in the sense of Definition \ref{deflapl}) $u\in C^\infty(\R{3})$.
\end{trm}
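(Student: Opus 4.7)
The plan is to adapt the Chang--Chen construction to the non-local setting in dimension three. The key preliminary is that a fundamental solution of $(-\Delta)^{3/2}$ on $\R{3}$ is proportional to $\log|x|$: there is an explicit $\gamma>0$ (with $\gamma = 4\pi^2$ up to conventions) such that $(-\Delta)^{3/2}(-\gamma^{-1}\log|x|)=\delta_0$ as tempered distributions, and the kernel of $(-\Delta)^{3/2}$ among polynomial tempered distributions is precisely the space of polynomials of degree at most $2$. This affine/quadratic freedom in the ansatz is decisive: a purely logarithmic representation would force $u(x)\sim -\alpha\log|x|$ at infinity with $\alpha=V/(2\pi^2)$, which for $V<2\pi^2$ gives $\alpha<1$, a decay too slow for $e^{3u}\in L^1(\R{3})$.

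Motivated by this, I would seek radial solutions of the form
\[
u(x) \;=\; \frac{2}{\gamma}\int_{\R{3}}\log\frac{|y|}{|x-y|}\,e^{3u(y)}\,\ud y \;-\; a|x|^2 \;+\; c,
\]
with $a>0$ and $c\in\mathbb{R}$ free parameters; the quadratic term guarantees $e^{3u}\in L^1(\R{3})$ regardless of the slow decay of the integral part. For each fixed $(a,c)$, I would run a Schauder fixed-point argument for the nonlinear operator $T_{a,c}$ defined by the right-hand side, on a closed convex bounded subset $\mathcal{K}\subset C^0_{\mathrm{rad}}(\R{3})$ of non-increasing radial functions satisfying two-sided pointwise bounds $-a|x|^2-C\le u(x)\le -a|x|^2+C$, for $C$ large. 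The self-map, continuity, and compactness properties all follow from sharp estimates on the kernel $\log(|y|/|x-y|)$, split according to whether $|y|\ll|x|$, $|y|\sim|x|$, or $|y|\gg|x|$, combined with the quadratic decay built into $\mathcal{K}$. A parameter-continuity argument then shows that the resulting map $(a,c)\mapsto V(a,c):=\int_{\R{3}}e^{3u_{a,c}}\,\ud x$ is continuous and sweeps out the full interval $(0,2\pi^2)$, covering the prescribed range of volumes.

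\emph{Main obstacle.} Without a radial ODE reduction (the engine of Chang--Chen in even dimensions), every estimate must be carried out directly on the non-local integral operator. The sharpest technical points are the self-map property for $T_{a,c}$ with matching two-sided constants and surjectivity of $(a,c)\mapsto V(a,c)$ onto $(0,2\pi^2)$: the upper endpoint $2\pi^2=|\mathbb{S}^3|$ is sharp, corresponding via a Moser--Trudinger-type obstruction to concentration onto a spherical profile. Once these steps are in place, smoothness of the fixed point and the verification of Definition~\ref{deflapl} (including $\Delta u\in L_{1/2}(\R{3})$) follow by a standard bootstrap on the integral representation, using $e^{3u}\in L^1\cap L^\infty_{\mathrm{loc}}$.
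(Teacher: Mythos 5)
Your proposal takes a genuinely different route from the paper: a direct fixed-point argument for the integral operator on $\R{3}$ followed by a parameter sweep in $(a,c)$, rather than the paper's variational minimization on $\mathbb{S}^3$. The observation that the quadratic polynomials lie in the kernel of $(-\Delta)^{3/2}$, and that subtracting $a|x|^2$ is what makes $e^{3u}$ integrable when the log-potential alone decays too slowly, is exactly the right starting point and is also the device the paper uses. However, the final steps of your sketch contain a genuine gap. Schauder's fixed-point theorem gives existence but not uniqueness, so the assignment $(a,c)\mapsto u_{a,c}$ is not even well-defined as a map, let alone continuous; the claim that ``a parameter-continuity argument then shows that $(a,c)\mapsto V(a,c)$ is continuous and sweeps out $(0,2\pi^2)$'' is therefore unsupported. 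Even granting a continuous selection, surjectivity onto the full interval $(0,2\pi^2)$ would require concrete limiting analysis at the endpoints of the parameter range (as $a\to 0^+$, $a\to\infty$, and in $c$), which is not provided and is not obvious. Your acknowledged ``main obstacle'' --- invariance of a two-sided box $-a|x|^2-C\le u\le -a|x|^2+C$ under $T_{a,c}$ --- is also a real issue: the leading coefficient of the log-potential is $\alpha=\frac{1}{\pi^2}\int e^{3u}$, which varies over the candidate set $\mathcal{K}$, so a naive constant $C$ is not evidently self-consistent.

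The paper sidesteps all of these difficulties by building the volume constraint directly into the variational problem. It proves Theorem \ref{trmexist} by pulling the equation back to $\mathbb{S}^3$ via stereographic projection, using the intertwining operator $P^3_{g_0}$, and minimizing the functional $J$ on $H^{3/2}(\mathbb{S}^3)$ with $\mu$ chosen a priori so that $(1-\mu)|\mathbb{S}^3|=V$; coercivity comes from Beckner's inequality and compactness from Proposition \ref{propcomp} on the compact sphere. The volume then comes out exactly right after normalizing the minimizer by an additive constant --- no surjectivity or continuous-selection argument is needed. Taking $K(x)=2e^{-3a|x|^2}$ and setting $u=w-a|x|^2$ then transfers the result directly to \eqref{eq0}--\eqref{area}. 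If you want to pursue your fixed-point route, you would at minimum need to replace the parameter sweep by a scheme that prescribes $V$ from the outset (e.g.\ a normalized integral equation with a Lagrange-multiplier constant), and you would need to confront the loss of compactness on the unbounded domain, both of which the spherical variational setup handles cleanly.
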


It is natural to try to gather information about the non-spherical smooth solutions produced by Theorem \ref{nonstandard}, in particular their behavior at infinity.
To do that, let us first recall that the fundamental solution of $(-\Delta)^\frac{3}{2}$ in $\R{3}$ is 
$$\Gamma(x):=\frac{1}{2\pi^2}\log\left(\frac{1}{|x|}\right),$$
i.e.
$(-\Delta)^\frac{3}{2}\Gamma=\delta_0$  in $\R{3}$ in the sense of tempered distributions. This follows, e.g., from $\Delta \log|x|=|x|^{-2}$ and Lemma \ref{lemmafund} below. Set
\begin{equation}\label{eqalpha}
\alpha:=\frac{1}{\pi^2}\int_{\R{3}}e^{3u}\,dx
\end{equation}
and
\begin{equation}\label{eqv}
v(x):=\frac{1}{\pi^2}\int_{\R{3}}\log\left(\frac{|y|}{|x-y|}\right) e^{3u(y)}dy,
\end{equation}
where $u$ is a smooth solution of \eqref{eq0}-\eqref{area}.
The function $v$ looks quite similar to $\Gamma * 2e^{3u}=\Gamma* (-\Delta)^\frac{3}{2}u$ (except for the additional $|y|$ appearing in the argument of the logarithm, which is necessary to make the integral in \eqref{eqv} convergent, but which plays no role after one differentiates $v$). In fact, as we shall see in Lemma \ref{equal} that $(-\Delta)^\frac{3}{2}v=(-\Delta)^\frac{3}{2}u$, it is reasonable to ask how $u$ and $v$ are related. Since for any polynomial $p$ of degree at most $2$ one has $(-\Delta)^\frac{3}{2}p=0$, one could wonder whether $u-v=p$ for a polynomial of degree $0$ (a constant), $1$ or $2$. It turns out that this is the case, and $p$ is either a constant or a polynomial of degree $2$ bounded from above. Moreover, $v$ exhibits a well-controlled behavior at infinity.

\begin{trm}\label{clas1} Let $u$ be a smooth solution of \eqref{eq0} satisfying \eqref{area}. Then
\begin{equation}\label{repr}
u=v+p,
\end{equation}
where $p$ is a polynomial of degree $0$ or $2$ bounded from above, $v$ is as in \eqref{eqv}. Moreover, $v$ satisfies
\begin{equation}\label{limDeltav}
\lim_{|x|\to\infty}\nabla^\ell v(x)=0,\quad \text{for }\ell =1,2,
\end{equation}
\begin{equation}\label{limv}
v(x)=-\alpha\log|x|+o(\log|x|),\quad \text{as }|x|\to\infty,
\end{equation}
where $\alpha>0$ is given by \eqref{eqalpha}.
\end{trm}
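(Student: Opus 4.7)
The plan is to prove the theorem in three stages: establish the asymptotic behaviour \eqref{limv} and \eqref{limDeltav} of $v$; use Lemma \ref{equal} together with a Liouville-type argument to show $h := u - v$ is a polynomial of degree at most $2$; and finally show this polynomial is bounded from above, forcing its degree to be either $0$ or $2$ with negative-semidefinite quadratic part.

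For the asymptotics of $v$, I would split the integral in \eqref{eqv} into the piece on $A_{x} := B_{|x|/2}(0) \cup B_{|x|/2}(x)$ and its complement. On $A_{x}^{c}$ the uniform expansion $\log(|y|/|x-y|) = -\log|x| + o(\log|x|)$ holds, so multiplying by $e^{3u(y)}/\pi^{2}$ and integrating over $\R{3}$ produces the leading term $-\alpha \log|x|$ by \eqref{eqalpha}; the contribution from $A_{x}$ is $o(\log|x|)$ since the logarithmic singularities there are locally integrable while $\int_{A_{x}} e^{3u}\,dy \to 0$ as $|x|\to\infty$. For \eqref{limDeltav}, differentiating under the integral produces kernels of order $|x-y|^{-\ell}$ for $\ell=1,2$, and dominated convergence after splitting as above yields the claimed decay.

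By Lemma \ref{equal}, $(-\Delta)^{3/2}v = 2e^{3u} = (-\Delta)^{3/2}u$, so $(-\Delta)^{3/2}h = 0$ in the distributional sense. Rewriting this as $(-\Delta)^{1/2}(-\Delta h) = 0$, and combining the decay $\Delta v \to 0$ with a growth bound on $\Delta u$ extracted from \eqref{eq0} and \eqref{area} via interior estimates for the half-Laplacian, a Liouville theorem for $(-\Delta)^{1/2}$ on $\R{3}$ forces $-\Delta h \equiv c_{0}$ to be a constant. Then $h + (c_{0}/6)|x|^{2}$ is harmonic on $\R{3}$ with at most polynomial growth, so by classical Liouville it is a harmonic polynomial of degree $\le 2$; hence $h = p$ is a polynomial of degree $\le 2$. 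If $p$ were unbounded above, $p(x) \to +\infty$ along some ray; since $v(x) = -\alpha\log|x| + O(1)$, we would get $u \to +\infty$ sufficiently fast on a cone, contradicting $\int e^{3u}\,dy < \infty$. So $p$ is bounded above, and among polynomials of degree $\le 2$ these are precisely constants or degree-$2$ polynomials with negative semidefinite quadratic form (and linear part vanishing on its kernel), giving the stated dichotomy.

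The principal obstacle is the Liouville step: extracting the growth bound on $\Delta u$ needed to apply the half-Laplacian Liouville, and verifying the polynomial growth of the harmonic remainder $h + (c_{0}/6)|x|^{2}$, without circularly assuming the polynomial structure being proved. This requires careful use of interior regularity for the non-local operator $(-\Delta)^{3/2}$ combined with the integrability hypothesis \eqref{area}.
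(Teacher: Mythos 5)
Your plan reverses the logical order used in the paper, and the reversal creates a genuine gap. You want to establish \eqref{limv}--\eqref{limDeltav} first and only afterward deduce the polynomial structure of $p:=u-v$, but the two-sided asymptotics $v(x)=-\alpha\log|x|+o(\log|x|)$ and the decay of $\nabla v,\Delta v$ require controlling $\int_{B_1(x)}e^{3u(y)}|x-y|^{-\ell}\,dy$, and that is only possible once one knows $u$ is bounded above. In the paper, $\sup_{\R{3}}u<\infty$ (Lemma \ref{lem: upper bd}) is deduced \emph{from} the polynomial representation (Proposition \ref{liou}), which is itself established using only the easy one-sided bound $-v(x)\leq\alpha\log|x|+C$ (Lemma \ref{upper bound}); the full asymptotics of $v$ come last (Lemma \ref{lower bound}). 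Your justification that the contribution from $A_x$ is $o(\log|x|)$ — ``the logarithmic singularities there are locally integrable while $\int_{A_x}e^{3u}\,dy\to 0$'' — does not close this: on $B_{|x|/2}(0)$ the mass of $e^{3u}$ does \emph{not} tend to $0$, and on $B_{|x|/2}(x)$ the product of the unbounded factor $\log(1/|x-y|)$ with the merely $L^1$ weight $e^{3u}$ is not a priori $o(\log|x|)$ without an $L^\infty$ bound on $u$. Your proposed order is therefore circular at exactly the point you flag as ``the principal obstacle.''

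The Liouville step also needs more than you supply. Once $\Delta p\equiv c_0$, the function $p-(c_0/6)|x|^2$ is harmonic, but classical Liouville gives degree $\leq 2$ only after one knows it grows at most like $|x|^2$; asserting ``at most polynomial growth'' is not enough. The paper uses a sharper statement (Theorem 6 in \cite{mar1}) that converts the one-sided bound on $v$ together with $\int_{\R{3}}e^{3u}\,dx<\infty$ into the degree bound on $p$, and that argument must be reproduced. Separately, you do not need to extract any growth bound on $\Delta u$ to conclude $\Delta p$ is constant: Definition \ref{deflapl} gives $\Delta u\in L_{1/2}(\R{3})$, Lemma \ref{equal} gives $\Delta v\in L_{1/2}(\R{3})$, so $\Delta p\in L_{1/2}(\R{3})$ and Lemma \ref{lem:constant} applies directly — this is both simpler and avoids any circularity.
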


The behavior at infinity of $u$ in terms of the decomposition $u=v+p$ in Theorem \ref{clas1}  can be used to give necessary and sufficient conditions under which a solution of \eqref{eq0}-\eqref{area} is spherical. This is the content of the following theorem.

\begin{trm}\label{clas2} Let $u$ be a smooth solution of \eqref{eq0} satisfying \eqref{area}. Then the following are equivalent:
\begin{itemize}
\item[(i)] $u$ is a spherical solution, i.e. $u=w_{x_0,\lambda}$ as in \eqref{wspherical} for some $\lambda>0$, $x_0\in\R{3}$;
\item[(ii)] $\deg p=0$, where $p$ is the polynomial in \eqref{repr};
\item[(iii)] $\lim_{|x|\to\infty}\Delta u(x)=0$;
\item[(iv)] $u(x)=o(|x|^2)$ as $|x|\to \infty$;
\item[(v)] $\liminf_{|x|\to +\infty}R_{g_u}> -\infty$, where $g_u=e^{2u}|dx|^2$, and $R_{g_u}$ is the scalar curvature of $g_u$;
\item[(vi)] $\pi^*g_u$ can be extended to a Riemannian metric on $\mathbb{S}^{3}$, where $\pi:\mathbb{S}^{3}\setminus\{p\}\to\R{3}$ is the 
stereographic projection and $p\in \mathbb{S}^3$ is the south pole.
\end{itemize}
Moreover, if $u$ is not a spherical solution then there exists a constant $a>0$ such that
\begin{equation}\label{deltaa}
\Delta u(x)\to -a\quad \textrm{as }|x|\to+\infty.
\end{equation}
\end{trm}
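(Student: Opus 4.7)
The plan is to close the chain of implications (i)$\Rightarrow$(vi)$\Rightarrow$(v)$\Rightarrow$(ii)$\Leftrightarrow$(iii)$\Leftrightarrow$(iv), together with the loop (ii)$\Rightarrow$(i), using the decomposition $u=v+p$ from Theorem \ref{clas1} and the asymptotic estimates \eqref{limDeltav}--\eqref{limv} throughout. The ``moreover'' claim will drop out of the analysis of $p$ in the non-spherical case.

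Since $\Delta v\to 0$ by Theorem \ref{clas1} and $\Delta p$ is a constant (as $\deg p\le 2$), one has $\Delta u(x)\to\Delta p$ as $|x|\to\infty$. If $\deg p=2$, write $p(x)=\tfrac12 x^{\top}Hx+\ell(x)$ with $\ell$ affine; the requirement that $p$ be bounded above forces the symmetric matrix $H$ to be negative semi-definite and non-zero, hence $\Delta p=\mathrm{tr}(H)<0$. This simultaneously proves (iii)$\Leftrightarrow\deg p=0\Leftrightarrow$(ii) and the ``moreover'' limit $\Delta u\to-a$ with $a:=-\Delta p>0$ whenever $u$ is not spherical. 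For (ii)$\Leftrightarrow$(iv), using $v=o(|x|^2)$ from \eqref{limv}, one has $u=o(|x|^2)\Leftrightarrow p=o(|x|^2)\Leftrightarrow\deg p=0$, the last equivalence because a non-trivial quadratic polynomial contains a non-trivial $|x|^2$-term.

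For (i)$\Rightarrow$(vi)$\Rightarrow$(v): a spherical $u$ expresses $g_u$ as the pullback of $g_0$ under a M\"obius map, so $\pi^* g_u$ is smooth on $\Sn$ and $R_{\pi^*g_u}$ is continuous on the compact $\Sn$, hence bounded. For (v)$\Rightarrow$(ii) we argue by contrapositive: if $\deg p=2$, pick a unit eigenvector $e$ of $H$ with negative eigenvalue $-\mu$; along the ray $x=te$ with $t\to+\infty$, $p(x)\sim-\tfrac{\mu}{2}t^2$, so $e^{-2u(x)}\sim |x|^{2\alpha}e^{\mu t^2}$ grows super-exponentially while $|\nabla u(x)|^2\sim\mu^2 t^2$. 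Then
\[
R_{g_u}=e^{-2u}\bigl(-4\Delta u-2|\nabla u|^2\bigr)
\]
tends to $-\infty$ along this ray, contradicting (v). Hence $\deg p=0$, i.e.~(ii).

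The main obstacle is (ii)$\Rightarrow$(i). Given $u=v+c$, I would first show $V=2\pi^2$ via a Pohozaev-type identity, evaluating $I:=\int_{\R{3}}e^{3u}(x\cdot\nabla u)\,dx$ in two ways. Integration by parts on $B_R$ gives
\[
\int_{B_R}e^{3u}(x\cdot\nabla u)\,dx=\frac{R}{3}\int_{\de B_R}e^{3u}\,dS-\int_{B_R}e^{3u}\,dx,
\]
and since the coarea identity $\int_0^\infty \bigl(R\int_{\de B_R}e^{3u}\,dS\bigr)\frac{dR}{R}=V<\infty$ forces $\liminf_{R\to\infty}R\int_{\de B_R}e^{3u}\,dS=0$, the boundary term vanishes along a subsequence, yielding $I=-V$. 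On the other hand, $\nabla u=\nabla v$ with
\[
\nabla v(x)=\frac{1}{\pi^2}\int_{\R{3}}\frac{y-x}{|x-y|^2}e^{3u(y)}\,dy,
\]
and the $x\leftrightarrow y$ symmetrization collapses the kernel to $-\tfrac{1}{2}$:
\[
I=\frac{1}{2\pi^2}\iint\frac{x\cdot(y-x)+y\cdot(x-y)}{|x-y|^2}e^{3u(x)}e^{3u(y)}\,dx\,dy=-\frac{V^2}{2\pi^2}.
\]
Comparing yields $V=2\pi^2$. With $\alpha=2$, a refinement of the integral expansion of $v$ (splitting the integrand into near- and far-field contributions at scale $|x|/2$) upgrades \eqref{limv} to $v(x)=-2\log|x|+C+o(1)$, so $u(x)+2\log|x|\to C+c$ at infinity and $\pi^*g_u$ extends continuously to $\Sn$; elliptic regularity applied to the constant $Q$-curvature equation upgrades this to a smooth extension, giving (vi). The classification of conformal constant $Q$-curvature metrics on $\Sn$---or alternatively a moving-plane argument for the non-local equation using the sharp asymptotic $u\sim-2\log|x|$---then forces $u=w_{x_0,\lambda}$. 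The two delicate points are the rigorous justification of the Pohozaev identity in the non-local setting (especially the Fubini/symmetrization step, where only the symmetrized kernel is nice) and the bootstrap of the extension across the south pole.
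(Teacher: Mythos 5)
Most of your argument coincides with the paper's. The equivalences $(ii)\Leftrightarrow(iii)\Leftrightarrow(iv)$ and the final limit \eqref{deltaa} follow, as in the paper, from the decomposition $u=v+p$, the asymptotics \eqref{limDeltav}--\eqref{limv}, and the fact that $\sup_{\R{3}}p<\infty$ forces $\Delta p\le 0$ with equality iff $p$ is constant. Your contrapositive for $(v)\Rightarrow(ii)$ --- that $\deg p=2$ makes $R_{g_u}=-2e^{-2u}(2\Delta u+|\nabla u|^2)\to-\infty$ along a suitable ray because $e^{-2u}$ blows up while the bracket becomes a negative quadratic --- is exactly the paper's argument. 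The paper disposes of $(vi)$ by showing directly that $\liminf_{|x|\to\infty}u(x)/|x|^2<0$ makes $\pi^*g_u$ degenerate at the south pole; your shortcut $(vi)\Rightarrow(v)$ (a Riemannian metric on compact $\Sn$ has bounded scalar curvature, which transports via the diffeomorphism $\pi$) is an acceptable alternative, but you should state it as such rather than fold it into the sentence about spherical $u$, which as written only establishes $(i)\Rightarrow(vi)$ and $(i)\Rightarrow(v)$.

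Where you genuinely diverge is $(ii)\Rightarrow(i)$. The paper reduces $u=v+c$ to the integral equation and simply invokes Theorem 4.1 of \cite{Xu}. You instead try to prove the classification from scratch: a Pohozaev identity to force $\alpha=2$, a refinement of \eqref{limv} to $v(x)=-2\log|x|+C+o(1)$, extension of $\pi^*g_u$ to a metric on $\Sn$, and then a rigidity theorem on $\Sn$ or a moving-plane argument. Your Pohozaev computation is correct ($I=-V$ by integration by parts plus coarea, $I=-V^2/(2\pi^2)$ by the symmetrization $x\cdot(y-x)+y\cdot(x-y)=-|x-y|^2$) and is precisely Lemma~\ref{lemmaxu} with $K\equiv 2$, a lemma the paper quotes from \cite{Xu} but uses only for Theorem~\ref{trmvolume}. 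The genuine gap is that your plan stops exactly where the real work begins: the ``classification of conformal constant $Q$-curvature metrics on $\Sn$'', or equivalently the moving-sphere argument for the integral equation, is a theorem of the same depth as the one you are trying to prove, and you only gesture at it. The paper sidesteps this by citing \cite{Xu}, whose Theorem 4.1 carries out that classification. As written, $(ii)\Rightarrow(i)$ in your proposal is incomplete; you would do better to cite Xu as the paper does, or else actually run the moving-sphere argument.
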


Conclusions similar to those of Theorem \ref{clas1} for solutions of \eqref{eqn} were proven by C-S. Lin \cite{lin1} in dimension $4$ and by L. Martinazzi \cite{mar1} in arbitrary \emph{even} dimension. Also Theorem \ref{clas2}, in this generality, was proven by Lin in dimension 4 and Martinazzi in arbitrary even dimension, extending several previous results in \cite{CY2, WX1, Xu}.

It is also interesting to investigate what values the volume $V$ in \eqref{area} can be attained. According to Theorem \ref{nonstandard}, and in analogy with Theorem A, every value in $(0,2\pi^2)$ can be attained, and of course the value $V=2\pi^2$ is attained by the spherical solutions. Can $V$ attain values bigger than the volume of $\mathbb{S}^3$? The corresponding question in dimension $4$ was answered in the negative by C-S. Lin \cite{lin1}, which shows that Theorem A is sharp as far as $V$ is concerned.

\begin{trmb}[\cite{lin1}]
For every non-spherical solution of Problem \eqref{eqn} with $n=4$ one has $V<|\mathbb{S}^4|$.
\end{trmb}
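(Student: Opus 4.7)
The plan is to combine the four-dimensional analogue of Theorem \ref{clas1} (proved by Lin in \cite{lin1}) with a Pohozaev-type identity for $\Delta^2 u=6e^{4u}$ in $\mathbb{R}^4$. In this critical dimension the identity is particularly favourable: the bulk $(\Delta u)^2$-term cancels exactly, leaving a purely asymptotic identity on $\partial B_R$ that can be evaluated using the known behaviour of $u$ at infinity.

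More precisely, by the $n=4$ version of Theorem \ref{clas1} one writes $u=v+p$, where $p$ is a polynomial of degree at most $2$ bounded above,
\[
v(x)=\frac{3}{4\pi^2}\int_{\R{4}}\log\frac{|y|}{|x-y|}\,e^{4u(y)}\,dy=-\alpha\log|x|+o(\log|x|),\qquad \alpha:=\frac{3V}{4\pi^2},
\]
with $\nabla^\ell v(x)=O(|x|^{-\ell})$ for $\ell\ge 1$; note that $V=|\mathbb{S}^4|=8\pi^2/3$ is equivalent to $\alpha=2$. Multiplying the equation by $x\cdot\nabla u$ and integrating on $B_R$, one integration by parts on the RHS yields $-6V+o(1)$. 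Two integrations by parts on the LHS, using $\Delta(x\cdot\nabla u)=2\Delta u+x\cdot\nabla\Delta u$, show that the bulk contribution equals $\bigl(2-\tfrac{n}{2}\bigr)\int_{B_R}(\Delta u)^2$, which vanishes exactly for $n=4$. One is left with the asymptotic identity
\[
\frac{R}{2}\int_{\partial B_R}(\Delta u)^2+\int_{\partial B_R}\bigl[(x\cdot\nabla u)\,\partial_\nu\Delta u-\partial_\nu(x\cdot\nabla u)\,\Delta u\bigr]\,d\sigma=-6V+o(1).
\]

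Next one inserts $u=v+p$ into this identity and lets $R\to\infty$. The leading divergent contributions, of order $R^4$ and coming from the quadratic part of $p$ via the constant $\Delta p\le 0$, cancel between the two boundary terms on the LHS thanks to the fact that the Hessian of $p$ is constant. The finite remainder splits into: \emph{(a)} a pure $v$-part, computable from the sharp asymptotics $\nabla v(x)=-\alpha x/|x|^2+o(|x|^{-1})$, $\Delta v(x)=o(1)$, which contributes a multiple of $\alpha(\alpha-2)|\mathbb{S}^3|$; and \emph{(b)} mixed and pure $p$-contributions that assemble into a non-negative quadratic form $Q(p)$ in the coefficients of the quadratic part of $p$, strictly positive whenever $\deg p=2$ (because the Hessian of $p$ is then a non-trivial negative semi-definite symmetric matrix). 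The resulting identity has the shape
\[
\alpha(2-\alpha)=Q(p)\ge 0.
\]
By the $n=4$ analogue of Theorem \ref{clas2}, $u$ is non-spherical precisely when $\deg p=2$; in that case $Q(p)>0$ and therefore $\alpha<2$, that is, $V<|\mathbb{S}^4|$.

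The \emph{main obstacle} is the asymptotic evaluation of the boundary integrals. Both $\frac{R}{2}\int_{\partial B_R}(\Delta u)^2$ and $\int_{\partial B_R}\partial_\nu(x\cdot\nabla u)\,\Delta u\,d\sigma$ individually blow up like $R^4$ when $\deg p=2$, and the argument hinges on extracting a finite limit from their delicate cancellation. For this one needs sharp Hessian-type estimates on $v$ (for instance $\nabla^2 v(x)=O(|x|^{-2})$ and $\nabla\Delta v(x)=O(|x|^{-3})$), together with a careful computation of $Q(p)$ and a verification of its non-negativity in terms of the non-positive-definite form $\tfrac{1}{2}\nabla^2 p$.
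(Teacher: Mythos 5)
The paper does not prove Theorem B itself (it cites Lin \cite{lin1}), but it proves the three-dimensional analogue, Theorem~\ref{trmvolume}, and the proof given there---which mirrors Lin's---is a clean application of a Pohozaev identity for the \emph{integral} equation satisfied by $v$: writing $e^{nu}=Ke^{nv}$ with $K=\mathrm{const}\cdot e^{np}$, Lemma~\ref{lemmaxu} (due to Xu) gives $\alpha(\alpha-2)=c\int x\cdot\nabla K\,e^{nv}\,dx=c'\int(x\cdot\nabla p)\,e^{nu}\,dx$, which is negative because $x\cdot\nabla p\le 0$, $\not\equiv 0$; hence $\alpha<2$. The scaling of the logarithmic kernel does all the Pohozaev work, and no boundary integrals on $\partial B_R$ ever appear. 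Your proposal runs the \emph{differential} Pohozaev identity for $\Delta^2u=6e^{4u}$ on $B_R$ instead, which is a genuinely different route, and the paper's/Lin's integral-equation device is exactly what avoids the difficulties you run into below.

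The gap in your sketch is substantive, and it is not merely that the boundary analysis is unfinished. Plugging the leading asymptotics you list ($v\sim-\alpha\log|x|$, $\Delta v\sim-2\alpha/|x|^2$, $x\cdot\nabla v\to-\alpha$, $\partial_\nu\Delta v\sim 4\alpha/|x|^3$) together with a quadratic $p$ into your boundary functional, one checks that the $R^4$ and $R^2$ divergences do cancel, exactly as you claim; but the surviving $R^0$ limit is $4\pi^2\alpha^2-8\pi^2\alpha^2=-4\pi^2\alpha^2$ with \emph{no} contribution from $p$, and equating this to the right-hand side $-6V=-8\pi^2\alpha$ forces $\alpha=2$ for \emph{every} solution, which is false. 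So the deviation of $\alpha$ from $2$ is not captured by a quadratic form $Q(p)$ in the coefficients of $p$: the integral identity shows it equals a constant multiple of $\int(x\cdot\nabla p)\,e^{4u}\,dx$, a solution-weighted moment of $e^{4u}$, which cannot be expressed through the coefficients of $p$ alone. In the differential approach this quantity is hidden in the \emph{sub-leading} asymptotics of $v$ (e.g.\ the $O(|x|^{-4})$ corrections to $\Delta v$, governed by second moments of $e^{4u}$) paired against $x\cdot\nabla p$ and $\partial_\nu(x\cdot\nabla p)$. Extracting it requires considerably finer expansions than the ones you invoke, and the conceptual misidentification of the source of positivity as $Q(p)$ is precisely what makes filling the gap non-routine.
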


 Surprisingly, it was recently shown by Martinazzi \cite{mar2} that in dimension $n=6$ things are quite different and \eqref{eqn} has solutions for $V$ arbitrarily large.

\begin{trmc}[\cite{mar2}]
There exist $V_*>|\mathbb{S}^6|$ and $V^*>0$ such that for every $V\in (0,V_*]$ and for every $V\ge V^*$ there exists a solution of \eqref{eqn} with $n=6$.
\end{trmc}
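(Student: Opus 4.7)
The plan is to exploit a classification of solutions by their behavior at infinity (the even-dimensional analogue of Theorem~\ref{clas1} proved in \cite{mar1}) to reduce the problem to a finite-dimensional parameter problem. Write $u = v + p$, where $v$ has logarithmic growth at infinity and $p$ is a polynomial of even degree at most $4$, bounded above. For radial solutions this forces
$$p(r) = c_0 + c_1 r^2 + c_2 r^4, \qquad c_2 \le 0 \ (\text{and } c_1 \le 0 \text{ if } c_2=0).$$
Modulo the scaling and translation symmetries, $(c_0,c_1,c_2)$ constitute the essential parameters. I would try to realize each admissible triple by a smooth radial solution and then read off the volume $V(c_0,c_1,c_2)$.

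For the construction I would use an inside-out shooting method. In the radial setting the operator $(-\Delta)^3$ reduces to a local sixth-order ODE in $r$, so nonlocality is not an obstacle. Starting from the prescribed polynomial asymptote $p(r)$ as $r\to\infty$, one solves the ODE backward toward $r=0$ by a fixed-point argument in a weighted function space tailored to the nonlinearity $e^{6u}$, which decays like $e^{-6|c_2|r^4}$ when $c_2<0$ (and like $e^{-6|c_1|r^2}$ when $c_2=0<|c_1|$). Regularity at $r=0$ imposes three conditions (vanishing of odd-order derivatives); these can be matched by adjusting the three free parameters in the asymptotic expansion of $v$ beyond $p$, via a degree-theoretic or implicit function theorem argument.

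For the small-volume range, the Chang-Chen construction underlying Theorem~A already yields existence for every $V\in(0,|\mathbb{S}^6|)$. To push slightly beyond $|\mathbb{S}^6|$, I would perturb the spherical solution $w_{0,\lambda}$ by $\varepsilon q(r)$ with $q$ a suitable fourth-degree polynomial bounded from above, and use the implicit function theorem to produce a true solution $u_\varepsilon$. Computing $\tfrac{dV}{d\varepsilon}\big|_{\varepsilon=0}$ and showing it is nonzero would imply that $V$ crosses $|\mathbb{S}^6|$ transversally, giving some $V_*>|\mathbb{S}^6|$. For the large-volume range, fix $c_2<0$, set $c_1=0$, and let $c_0\to+\infty$: the remainder $v$ should be negligible compared to $p$ on the relevant scale, so heuristically $V(c_0,0,c_2)\sim e^{6c_0}\!\int_{\R{6}}e^{-6|c_2|r^4}\,dx$, which diverges; connectivity of the parameter space then fills in all sufficiently large $V\ge V^*$.

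The main obstacle is the shooting/fixed-point argument in the noncompact setting, which is more delicate than the compact lift to $\mathbb{S}^6$ exploited by Chang-Chen and which underlies the restriction $V<|\mathbb{S}^n|$ there. The linearization around the polynomial profile may have a nontrivial kernel arising from the dilation symmetry and from the polyharmonic polynomials in $\ker(-\Delta)^3$; this kernel must be identified and quotiented out before the implicit function theorem can be applied. In addition, making the heuristic $V\to\infty$ as $c_0\to\infty$ rigorous requires uniform control of $v=u-p$ at intermediate scales in terms of the parameters, together with a verification that smooth radial solutions persist throughout the unbounded branch $c_0\to+\infty$ rather than developing singularities.
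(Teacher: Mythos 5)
This statement is not proved in the paper: Theorem~C is quoted from \cite{mar2} (Martinazzi, \emph{Conformal metrics on $\R{2m}$ with constant $Q$-curvature and large volume}) as background motivating the dimension-three result (Theorem~\ref{trmvolume}), and no proof is given here. So there is no internal proof to compare against, and you should be aware that what you have written is a sketch of an argument that belongs to a different paper.

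That said, the broad framework you outline --- radial reduction of $(-\Delta)^3$ to a local sixth-order ODE, decomposition $u=v+p$ with $p$ a polynomial of degree at most $4$ bounded above, and continuity of the volume in the essential parameters --- is indeed the right one, and it is close in spirit to what \cite{mar2} actually does. Two substantive concerns with your version. First, the ``inside-out'' shooting from a prescribed polynomial asymptote at $r=\infty$ is considerably harder to set up than the shooting from $r=0$ used in \cite{mar2}, where smoothness at the origin removes the odd derivatives and leaves, after normalizing $u(0)=0$ by a dilation, a two-parameter family $(\Delta u(0),\Delta^2 u(0))$; prescribing a polynomial tail and recovering regularity at the origin as a matching condition introduces exactly the kind of weighted-space/spectral difficulties you flag at the end, and you give no indication of how to overcome them. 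Second, the large-volume heuristic ``fix $c_2<0$, set $c_1=0$, let $c_0\to+\infty$, so $V\sim e^{6c_0}\int e^{-6|c_2|r^4}$'' is not sound as stated: the constant $c_0$ is not a free gauge parameter (adding a constant to $u$ changes the right-hand side $e^{6u}$, and cannot be absorbed by a dilation without also changing $c_2$), and the remainder $v$ is strongly coupled to $p$ through the nonlinearity, so there is no reason to expect $v$ to stay ``negligible'' as $c_0\to\infty$. The mechanism in \cite{mar2} for making $V$ large is a genuinely different and more delicate argument driven by the sign and size of the fourth-order coefficient (roughly, $\Delta^2 u(0)$) rather than by the additive constant; replacing your heuristic with that mechanism would be necessary for a correct proof.

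Finally, your claim that ``the Chang--Chen construction already yields existence for every $V\in(0,|\mathbb{S}^6|)$'' is true but does not, on its own, give the endpoint or the overshoot: the crossing argument past $|\mathbb{S}^6|$ needs the radial ODE continuity in the shooting parameters, not a perturbation off the compactified variational construction on $\mathbb{S}^6$, which is not naturally set up to track $V$ past the Beckner threshold.
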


It turns out that in dimension $3$ Problem \eqref{eqn} behaves like in dimension $4$ and not like in dimension 6. More precisely:

\begin{trm}\label{trmvolume} Let $u$ be a non-spherical smooth solution of \eqref{eq0}-\eqref{area}. Then $V<2\pi^2$.
\end{trm}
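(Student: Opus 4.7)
By Theorem \ref{clas2}, a non-spherical smooth solution $u$ satisfies $u=v+p$ with $p$ a polynomial of degree exactly $2$ bounded from above and $\Delta u(x)\to -a$ as $|x|\to\infty$ for some $a>0$. After a suitable translation we may write $p(x)=p_{0}-\tfrac{1}{2}x^{\top}Ax$ with $A$ a non-zero positive semidefinite symmetric matrix of trace $a$ (the linear part can be absorbed into the translation because $p$ is bounded above). By \eqref{limv} one has $V=\alpha\pi^{2}$, so $V<2\pi^{2}$ is equivalent to the strict inequality $\alpha<2$; note that $\alpha=2$ is attained by the spherical solutions, so the content of the theorem is precisely the strictness.

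The plan is to derive a Pohozaev-type integral identity adapted to the non-local equation \eqref{eq0}. Because the conformal multiplier $x\cdot\nabla u+1$ (associated with the scale invariance $u_{\lambda}(x)=u(\lambda x)+\log\lambda$) yields only the tautology $\int(x\cdot\nabla u+1)\cdot 2e^{3u}\,dx=0$, I would instead follow the strategy of Lin \cite{lin1} and Martinazzi \cite{mar1} in even dimensions and compute the integrals $\int_{B_R}(-\Delta)^{3/2}u\,dx=2V_R$ together with a higher moment such as $\int_{B_R}|x|^{2}(-\Delta)^{3/2}u\,dx$ in two different ways as $R\to\infty$. On the one hand, directly from the PDE; on the other hand, using $(-\Delta)^{3/2}u=-(-\Delta)^{1/2}(\Delta v)$ (since $(-\Delta)^{1/2}$ annihilates the constant $a$ appearing in $\Delta u=\Delta v-a$), followed by a boundary/extension analysis exploiting the sharp asymptotics $v\sim-\alpha\log|x|$, $-\Delta v\sim \alpha/|x|^{2}$ and $\nabla^{\ell}v\to 0$ for $\ell=1,2$ from Theorem \ref{clas1}. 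Matching the two evaluations should produce a relation between $V=\alpha\pi^{2}$ and the quadratic form $A$ of the schematic form $\alpha(\alpha-2)\propto -\int x^{\top}Ax\cdot e^{3u}\,dx$, with a strictly positive proportionality constant; since the right-hand side is strictly negative whenever $A\neq 0$ (i.e., in the non-spherical case), this forces $\alpha<2$.

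The main technical obstacle is to rigorously extract the $\alpha^{2}-2\alpha$ boundary contribution in the non-local setting. The natural tool is the Caffarelli--Silvestre extension, which turns $(-\Delta)^{1/2}$ into a local (though degenerate-elliptic) operator on the upper half-space $\mathbb{R}^{4}_{+}$, allowing classical Green-type identities and boundary-flux computations to be used. Alternatively, one may work directly with the explicit integral formula \eqref{eqv} for $v$, reducing via Fubini to a double integral of $\log(|y|/|x-y|)\,e^{3u(x)}e^{3u(y)}$ whose large-$|x|$ asymptotic expansion yields the same information. The integrability of the relevant moments $\int|x|^{2}e^{3u}\,dx$ and $\int x^{\top}Ax\cdot e^{3u}\,dx$ needed to justify these manipulations follows from Theorem \ref{clas1}: the Gaussian decay of $e^{3p}$ along the range of $A$ dominates the $|x|^{2}$ growth, while along $\ker A$ the form $x^{\top}Ax$ vanishes and only the polynomial tail $|x|^{-3\alpha}$ from $e^{3v}$ enters, which is integrable in the effective dimension of $\ker A$. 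The delicate part of the argument is the book-keeping of boundary terms at infinity produced by the logarithmic asymptotic of $v$ interacting with the quadratic polynomial $p$ through the non-local operator $(-\Delta)^{3/2}$.
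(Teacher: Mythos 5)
Your overall strategy---reducing the theorem to the strict inequality $\alpha<2$ and obtaining it from a Pohozaev-type identity applied to the integral equation satisfied by $v$ with prescribed curvature $K=2e^{3p}$---is exactly the route taken in the paper, and your schematic conclusion $\alpha(\alpha-2)\propto-\int x^{\top}Ax\,e^{3u}\,dx<0$ (with $A\neq 0$ positive semidefinite because $u$ is non-spherical) is correct in both sign and substance.

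The genuine gap is that you leave the Pohozaev identity itself as a proposal: you sketch two possible derivations (a Caffarelli--Silvestre extension argument, or a direct asymptotic expansion of the double integral built from the $\log(|y|/|x-y|)$ kernel) but carry out neither, and each would require a nontrivial amount of book-keeping at infinity. The paper's observation, which makes its proof essentially two lines, is that the identity you need is already in the literature for precisely this class of integral equations: it is Theorem 2.1 of Xu \cite{Xu}, recalled as Lemma \ref{lemmaxu}, which states that if $w\in C^{1}(\R{3})$ solves $w(x)=\frac{1}{2\pi^2}\int_{\R{3}}\log\bigl(\frac{|y|}{|x-y|}\bigr)K(y)e^{3w(y)}\,dy$ with $Ke^{3w}\in L^{1}(\R{3})$, then $\alpha(\alpha-2)=\frac{1}{3\pi^{2}}\int_{\R{3}}x\cdot\nabla K\,e^{3w}\,dx$. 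Applying this with $w=v$, $K=2e^{3p}$ gives $x\cdot\nabla K=6e^{3p}\,(x\cdot\nabla p)\le 0$, not identically zero, hence $\alpha(\alpha-2)<0$. So what is missing from your argument is not a new non-local computation (no extension to $\R{4}_{+}$, no moment integrals $\int_{B_{R}}|x|^{2}(-\Delta)^{3/2}u\,dx$) but rather the recognition that the Pohozaev identity for the \emph{integral} formulation is available off the shelf once one notices that Lemma \ref{equal} and Proposition \ref{liou} put $v$ exactly in the form required by Xu's theorem.
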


Let us spend a few words about the potential applications of Theorems \ref{clas1} and \ref{clas2}. In even dimension $n=2m$, their analogs (compare to Problem \eqref{eqn}) have been widely used to prove compactness, quantization and existence results for equations of order $2m$ with critical growth, such as the equation
\begin{equation}\label{eqMT}
(-\Delta)^m u=\lambda u e^{mu^2},\quad \lambda>0
\end{equation}
satisfied by critical points of some Moser-Trudinger type inequality, see, e.g., \cite{AD, AS, dru, LRS, mar4, MS2, RS, str},
or the equation
\begin{equation}\label{eqQcurv}
P^{2m}_g u+ Q_g= Q e^{2mu}\quad \text{on a manifold } (M^{2m},g)
\end{equation}
which prescribes the $Q$-curvature of the manifold $(M, e^{2u}g)$, see, e.g., \cite{DR, mal, MS, mar3, str0},
or to the higher order Liouville equation 
\begin{equation}\label{eqopen}
(-\Delta)^m u= Ve^{2mu}\quad \text{in }\Omega\subset\R{2m},\quad V\in L^\infty(\Omega),
\end{equation}
see, e.g., \cite{mar5, rob1, rob2, RW, MP}. The main idea is that if a sequence $\{u_k\}$ of solutions (or the heat flow) of \eqref{eqMT}, \eqref{eqQcurv} or \eqref{eqopen} is not pre-compact, then a suitably blown-up  subsequence will converge strongly (say in $C^{2m}_{\loc}(\R{2m})$) to a solution of \eqref{eqn}. Then it is understandably important to know the behavior of the solutions of \eqref{eqn}, and in particular to have geometric or analytic conditions which ensure that a solution is spherical. Therefore, we expect that the above Theorems \ref{clas1} and \ref{clas2} will be useful in understanding the non-local analogs of  \eqref{eqMT}, \eqref{eqQcurv} and \eqref{eqopen} in dimension $3$.

\medskip

The paper is organized as follows. In Section \ref{GJMS} we start with some definitions and results which will be necessary to give a simple and essentially self-contained (up to Beckner's inequalities and the Sobolev embeddings) proof of Theorem \ref{nonstandard}. The proof of Theorem \ref{nonstandard} will be then given in Section \ref{s:nonex}, and it will follow from  Theorem \ref{trmexist}. In Section \ref{s:estimates} we prove the main lemmas which will be used to prove Theorems \ref{clas1}, \ref{clas2} and \ref{trmvolume}. In the appendix we collect a few definitions and theorems about the fractional Laplacian. 

\bigskip

\noindent\textbf{Acknowledgements:} We would like to thank anonymous referee for his/her suggestions and comments. A. Maalaoui and L. Martinazzi were supported in part by the Swiss National Science Foundation. J. Xiong was supported in part by the First Class Postdoctoral
Science Foundation of China (No. 2012M520002). Part of the work was done while A. Maalaoui and L. Martinazzi were attending the research program
``Conformal Geometry and Geometric PDE's'' at CRM Barcelona.

\section{Preliminaries}\label{GJMS}

Let $g_0$ be the standard metric on $\mathbb{S}^3$ and $\Delta_{g_{0}}$ be the Laplace-Beltrami operator. Let $\{\lambda_k=k(k+2), k\in\mathbb{N}\cup\{0\}\}$ be the eigenvalues of $-\Delta_{g_0}$. The eigenspace of $\lambda_k$ is of finite dimension $N_k$ and is spanned by spherical harmonics $Y^\ell_k $ of degree $k$, where $\ell=1,\cdots, N_k$  (see, e.g., \cite{Stein}). We renormalize them so that $\|Y^{\ell}_{k}\|_{L^2(\mathbb{S}^3)}=1$. The spherical harmonics $\{Y^{\ell}_{k}\}$ form an orthonormal basis of the Hilbert space $L^2(\mathbb{S}^3)$. In particular, given $u\in L^2(\mathbb{S}^3)$ we can write
\begin{equation}\label{uspher}
u = \sum_{k=0}^{\infty}\sum_{\ell=1}^{N_k} u_{k}^{\ell}Y^{\ell}_{k},\quad u_k^\ell\in\R{},
\end{equation}
and $\|u\|_{L^2(\mathbb{S}^3)}=\sum_{k,\ell} (u_k^\ell)^2$.

One can define an operator $P_{g_{0}}^{3}$ as follows (see e.g. \cite{Beck} and \cite{CQ}). Given $u \in L^{2}(\mathbb{S}^3)$ with spherical harmonics expansion as in \eqref{uspher} such that
\begin{equation}\label{def}
\|u\|^2_{H^3(\mathbb{S}^3)}:= \|u\|_{L^2}^2+\sum_{k=1}^{\infty}\sum_{\ell=1}^{N_k}|u^{\ell}_{k}|^2(\lambda_{k}+1)\lambda_{k}^2<\infty,
\end{equation} 
we define
$$P_{g_{0}}^{3}u:=\sum_{k=0}^{\infty}(\lambda_{k}+1)^{\frac{1}{2}}\lambda_{k}\sum_{\ell=1}^{N_k}u^{\ell}_{k}Y^{\ell}_{k}.$$
Notice that on $H^3(\mathbb{S}^3)$ the operator $P^{3}_{g_{0}}$ coincides with the operator $(-\Delta_{g_{0}}+1)^{\frac{1}{2}}(-\Delta_{g_{0}})$, where the operator $(-\Delta_{g_{0}}+1)^{\frac{1}{2}}$ is also understood in terms of spectral decomposition of the Laplace-Beltrami operator:
$$(-\Delta_{g_{0}}+1)^{\frac{1}{2}} u=\sum_{k=0}^\infty \sqrt{\lambda_k+1}\sum_{\ell=1}^{N_k} u^{\ell}_k Y^{\ell}_k,\quad \text{for $u$ as in \eqref{uspher}}.$$
Therefore, $P^3_{g_0}$ is the well-known intertwining operator on $\mathbb{S}^3$ (see, e.g., \cite{Bra}).

Define the space
$$H^{\frac{3}{2}}(\mathbb{S}^{3})=\left\{ u=\sum_{k=0}^\infty \sum_{\ell=1}^{N_k} u_k^\ell Y_k^\ell \in L^{2}(\mathbb{S}^3): \sum_{k=0}^{\infty}(\lambda_{k}+1)^{\frac{1}{2}}\lambda_{k}\sum_{\ell=1}^{N_k}|u^{\ell}_{k}|^{2}<\infty \right\},$$
endowed with the seminorm 
$$\|u\|_{\dot H^{3/2}}^2:= \sum_{k=0}^{\infty}(\lambda_{k}+1)^{\frac{1}{2}}\lambda_{k}\sum_{\ell=1}^{N_k}|u^{\ell}_{k}|^{2}$$
and with the norm
$$\|u \|_{H^{3/2}}^2:=\|u\|_{L^2}^2+\|u\|^2_{\dot H^{3/2}}.$$

Since the operator $P^{3}_{g_{0}}$ is self-adjoint and non-negative, one can define the operator $(P^{3}_{g_{0}})^{\frac{1}{2}}$ for functions $u\in H^\frac{3}{2}(\mathbb{S}^3)$ as
$$(P^{3}_{g_{0}})^{\frac{1}{2}}u=\sum_{k=1}^{\infty}(\lambda_{k}\sqrt{1+\lambda_{k}})^{\frac{1}{2}}\sum_{\ell=1}^{N_k}u^{\ell}_{k}Y^{\ell}_{k}.$$
Notice that $\|(P^{3}_{g_{0}})^{\frac{1}{2}}u\|_{L^2}=\|u\|_{\dot H^{3/2}}$. 

The (fractional) Sobolev embeddings give $H^\frac{3}{2}(\mathbb{S}^3)\hookrightarrow L^p(\mathbb{S}^3)$ for every $p\in [1,\infty)$ but not for $p=\infty$, in which case the following inequality is a useful replacement.

\begin{trm}[Theorem 1 in \cite{Beck}]\label{beck}
For every $u\in H^{\frac{3}{2}}(\mathbb{S}^3)$ one has
\begin{equation}\label{eq:beck}
\log \left(\dashint_{\mathbb{S}^3} e^{u-\overline{u}} dV_{0}\right)\leq \frac{1}{24\pi^2}\|u\|_{\dot H^{3/2}}^2,
\end{equation}
where $\dashint_{\mathbb{S}^3}=\frac{1}{|\mathbb{S}^3|}\int_{\mathbb{S}^3}$, $\overline u$ is the average of $u$ on $\mathbb{S}^3$ and $dV_0$ is the standard volume element of $\mathbb{S}^3$.
\end{trm}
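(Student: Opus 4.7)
The statement is a sharp Moser--Trudinger/Onofri-type inequality on $\mathbb{S}^3$ for the intertwining operator $P^3_{g_0}$, and my plan is to follow Beckner's strategy: derive \eqref{eq:beck} by convex duality from a sharp logarithmic Hardy--Littlewood--Sobolev inequality on the sphere, proved by rearrangement combined with Möbius invariance. First I would set up the dual. Let $G$ denote the Green's function of $P^3_{g_0}$ on the $L^2$-orthogonal complement of constants, with spherical harmonic expansion
\begin{equation*}
G(x,y)=\sum_{k\ge 1}\frac{1}{\lambda_k(\lambda_k+1)^{1/2}}\sum_{\ell=1}^{N_k}Y_k^\ell(x)Y_k^\ell(y),
\end{equation*}
whose leading singularity matches $-\frac{1}{2\pi^2}\log d(x,y)$ as $y\to x$ (verifiable either by Funk--Hecke summation, or by transplanting the Euclidean fundamental solution $\Gamma(x)=\frac{1}{2\pi^2}\log(1/|x|)$ via stereographic projection and using the conformal covariance of $P^3_{g_0}$ recorded in \cite{Bra}). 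Applying the Donsker--Varadhan variational characterization of the log-exponential with the optimal test function $u=\beta\, G*(f-|\mathbb{S}^3|^{-1})$, the inequality \eqref{eq:beck} becomes equivalent to the sharp logarithmic HLS inequality
\begin{equation*}
\int_{\mathbb{S}^3}f\log f\,dV_0\;\ge\;12\pi^2\iint_{\mathbb{S}^3\times\mathbb{S}^3}f(x)G(x,y)f(y)\,dV_0(x)\,dV_0(y)
\end{equation*}
for every probability density $f$ on $\mathbb{S}^3$.

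To prove this dual inequality, observe that both sides are invariant under the Möbius action $f\mapsto(f\circ\Phi)|\mathrm{Jac}\,\Phi|$ of $\mathrm{Conf}(\mathbb{S}^3)$. Fixing a center-of-mass normalization via the standard topological/degree argument of Hersch and Chang--Yang breaks the noncompact symmetry, after which symmetric decreasing rearrangement on the stereographic chart reduces the problem to radial $f$ on $\R{3}$ paired with the Euclidean kernel $-\frac{1}{2\pi^2}\log|x-y|$. On radial functions this kernel diagonalizes under the Mellin transform on $(0,\infty)$; the sharp constant is then extracted by matching with the extremal family --- the conformal orbit of $f\equiv|\mathbb{S}^3|^{-1}$, corresponding under stereographic projection to the spherical solutions $w_{x_0,\lambda}$ --- and reversing the duality produces the primal constant $\frac{1}{24\pi^2}=\frac{1}{12|\mathbb{S}^3|}$ in \eqref{eq:beck}.

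The main obstacle is the noncompactness of the Möbius group, which defeats any direct variational attack on \eqref{eq:beck}: a minimizing sequence for the primal functional $\frac{1}{24\pi^2}\|u\|_{\dot H^{3/2}}^2-\log\dashint_{\mathbb{S}^3} e^{u-\bar u}\,dV_0$ can concentrate along a sequence of Möbius transformations $\Phi_n$ with no subsequential limit, and it is the Hersch-type renormalization of the center of mass that restores compactness. A secondary technical point will be verifying that the leading singularity of $G$ has coefficient exactly $-\frac{1}{2\pi^2}$; this is a direct spectral computation using the eigenvalues $\lambda_k(\lambda_k+1)^{1/2}$ of $P^3_{g_0}$ matched against the Gegenbauer expansion of $-\log|x-y|$ under stereographic projection, and it is what ultimately pins down the sharp constant in \eqref{eq:beck}.
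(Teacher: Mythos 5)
The paper does not actually prove this theorem: it is imported verbatim from Beckner's Annals paper (the bracketed attribution ``[Theorem 1 in \cite{Beck}]'' in the theorem header is the entire content of the ``proof''), and the only additional material is the remark immediately afterwards, which checks that the constant $\frac{1}{24\pi^2}$ and the seminorm $\|\cdot\|_{\dot H^{3/2}}$ translate Beckner's original formulation with $c_k=\frac{k(k+1)(k+2)}{2}$ and the normalized volume element $d\xi=\frac{1}{2\pi^2}dV_0$. Your proposal, by contrast, sketches an actual proof. As a high-level outline of Beckner's strategy --- convex/Donsker--Varadhan duality reducing \eqref{eq:beck} to a sharp logarithmic HLS inequality, whose constant is then extracted from conformal invariance, rearrangement and a Hersch-type center-of-mass (competing-symmetries) normalization --- it is faithful to how the result is really obtained, so it is a legitimate outline of the cited argument rather than a ``different route'' from the paper, which offers no route at all.

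Two points in the sketch do not survive scrutiny. First, the dual inequality as you wrote it is false: taking $f\equiv|\mathbb{S}^3|^{-1}$ gives $\iint_{\mathbb{S}^3\times\mathbb{S}^3}f\,G\,f\,dV_0\,dV_0=0$ (since $G$ integrates to zero in each variable) while $\int_{\mathbb{S}^3}f\log f\,dV_0=-\log|\mathbb{S}^3|<0$, so $\int f\log f\ge 12\pi^2\iint fGf$ fails for the very density that should saturate it. Running the Legendre step you describe, $\sup_\beta\bigl(\beta-\frac{\beta^2}{24\pi^2}\bigr)=6\pi^2$, so the coefficient should be $6\pi^2$ and the dual inequality must carry the additive constant $-\log(2\pi^2)$, i.e.\ $\int f\log f\,dV_0\ge 6\pi^2\iint fGf\,dV_0\,dV_0-\log(2\pi^2)$, with equality at $f\equiv|\mathbb{S}^3|^{-1}$. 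Second, the ``Mellin diagonalization on radial $f$'' step is not how the sharp constant is obtained in Beckner, Lieb, or Carlen--Loss: Lieb's sharp HLS constant comes from rearrangement plus a compactness argument, and Carlen--Loss's self-contained proof of log-HLS runs a competing-symmetries iteration to convergence against the known conformal extremal. The Mellin idea is not absurd, but it is the one ingredient in your outline that is not classical and would need substantial independent justification; everything else (the Green's function expansion, its $-\frac{1}{2\pi^2}\log d(x,y)$ singularity via conformal covariance, the Hersch normalization) is correct as you state it.
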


\noindent\emph{Remark}: Our statement might appear slightly different from the one in \cite{Beck}. In \cite{Beck} the right-hand side is replaced by $\frac{1}{12\pi^{2}}\sum_{k=0}^\infty c_{k}\sum_{\ell}|u^{\ell}_{k}|^{2},$ where $c_{k}=\frac{k(k+1)(k+2)}{2}$. But since $\lambda_k=k(k+2)$ one sees that $c_k=\frac{\lambda_k\sqrt{\lambda_k+1}}{2}$. Moreover, in \cite{Beck} the volume element $d\xi$ is the renormalized volume on the sphere, i.e. $d\xi= \frac{1}{2\pi^2}dV_0$.

We will also use the following compactness property.

\begin{prop}\label{propcomp}
For every $p\in [1,\infty)$ the map $\exp: u\mapsto e^u$ sends $H^\frac{3}{2}(\mathbb{S}^3)$ into $L^p(\mathbb{S}^3)$ and is compact.
\end{prop}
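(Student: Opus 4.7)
My plan is to establish the proposition in two stages: first, that $\exp$ maps bounded sets of $H^{3/2}(\mathbb{S}^3)$ into bounded sets of $L^p(\mathbb{S}^3)$ (which in particular ensures well-definedness), and then to upgrade this boundedness to compactness via a uniform-integrability / Vitali argument. For the first stage, given $u\in H^{3/2}(\mathbb{S}^3)$ and $p\in[1,\infty)$, I apply Beckner's inequality (Theorem \ref{beck}) to $pu$: since $\|pu\|_{\dot H^{3/2}}=p\|u\|_{\dot H^{3/2}}$, this gives $\int_{\mathbb{S}^3}e^{pu}\,dV_0\le 2\pi^2\exp\bigl(p\bar u+\tfrac{p^2}{24\pi^2}\|u\|_{\dot H^{3/2}}^2\bigr)$, and $|\bar u|\le C\|u\|_{L^2}\le C\|u\|_{H^{3/2}}$ by Cauchy--Schwarz. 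Thus $\|e^u\|_{L^p(\mathbb{S}^3)}$ is controlled by a continuous function of $\|u\|_{H^{3/2}}$, which in particular shows $e^u\in L^p(\mathbb{S}^3)$.

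For compactness, take a bounded sequence $\{u_k\}\subset H^{3/2}(\mathbb{S}^3)$. By reflexivity, pass to a subsequence (not relabelled) with $u_k\rightharpoonup u$ weakly in $H^{3/2}$. On the compact manifold $\mathbb{S}^3$ the embedding $H^{3/2}\hookrightarrow H^{s'}$ is compact for any $s'<3/2$ (Rellich--Kondrachov), and composing with the continuous embedding $H^{s'}\hookrightarrow L^q$ valid for $s'$ sufficiently close to $3/2$ gives a compact embedding $H^{3/2}(\mathbb{S}^3)\hookrightarrow L^q(\mathbb{S}^3)$ for every finite $q$. Therefore $u_k\to u$ strongly in $L^q(\mathbb{S}^3)$ for every $q<\infty$, and, up to a further diagonal extraction, $u_k\to u$ almost everywhere on $\mathbb{S}^3$; in particular $e^{pu_k}\to e^{pu}$ a.e.

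To convert a.e. convergence into $L^1$ convergence of $e^{pu_k}$ I would use a higher-exponent trick: fixing any $q>p$, the first-stage bound applied with exponent $q$ gives $\sup_k \int_{\mathbb{S}^3} e^{qu_k}\,dV_0<\infty$, so $\{e^{pu_k}\}$ is bounded in $L^{q/p}(\mathbb{S}^3)$ with $q/p>1$, hence uniformly integrable. By Vitali's convergence theorem, $e^{pu_k}\to e^{pu}$ in $L^1(\mathbb{S}^3)$, equivalently $e^{u_k}\to e^u$ in $L^p(\mathbb{S}^3)$, which extracts a convergent subsequence of $\{e^{u_k}\}$ in $L^p$ and establishes compactness.

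The argument is essentially routine; there is no substantial obstacle, since the only non-trivial input --- Beckner's inequality --- is already available. The sole point worth emphasising is the higher-exponent trick: in the absence of any $L^\infty$ control on the $u_k$, one exploits the exponential integrability at a slightly higher power $q>p$ to secure uniform integrability of $\{e^{pu_k}\}$, which is what allows a.e. convergence to be promoted to strong $L^p$ convergence.
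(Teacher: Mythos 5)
Your proof is correct, but the compactness step takes a genuinely different route from the paper's. The paper proves a Lipschitz-type continuity estimate by writing $\|e^u-e^v\|_{L^p}^p$ as an integral over the segment $tv+(1-t)u$, applying Cauchy--Schwarz and Beckner to get $\|e^u-e^v\|_{L^p}^p\le C(p,\|u\|_{H^{3/2}},\|v\|_{H^{3/2}})\,\|u-v\|_{L^2}$; and then for compactness it observes $\nabla e^u=(\nabla u)e^u$, so $\|\nabla e^u\|_{L^1}\le\|\nabla u\|_{L^2}\|e^u\|_{L^2}$ is controlled by Beckner, which places $e^u$ in a bounded set of $W^{1,1}(\mathbb{S}^3)$ and yields precompactness in $L^1$ via the Rellich embedding $W^{1,1}\hookrightarrow L^1$ (then replaces $u$ by $pu$). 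You instead invoke weak compactness of $H^{3/2}$, the compact embedding $H^{3/2}\hookrightarrow L^q$ (via $H^{s'}$, $s'<3/2$) to get a.e.\ convergence, and then close with Vitali, using the ``higher exponent'' $q>p$ to secure uniform integrability of $\{e^{pu_k}\}$. Both hinge on the same input (Beckner), but the paper's $W^{1,1}$ observation is a more self-contained and arguably slicker way to get precompactness, while your route is the standard a.e.-convergence-plus-uniform-integrability blueprint and, as a bonus, directly yields weak-to-strong sequential continuity (and hence norm continuity), which the paper achieves via its separate Lipschitz estimate.

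One small informality in your last step: $e^{pu_k}\to e^{pu}$ in $L^1$ is not literally the same as $e^{u_k}\to e^u$ in $L^p$. It does follow, but you should either apply Vitali directly to $|e^{u_k}-e^u|^p$ (which tends to $0$ a.e.\ and is dominated by $2^{p-1}(e^{pu_k}+e^{pu})$, itself uniformly integrable), or note that $\|e^{u_k}\|_{L^p}^p\to\|e^u\|_{L^p}^p$ together with a.e.\ convergence gives $L^p$ convergence by the Riesz--Scheff\'e lemma. This is a one-line repair, not a gap in the approach.
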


\begin{proof}
 For $u,v\in C^\infty(\mathbb{S}^3)$ we can bound
\begin{equation}\label{stimecont} 
\begin{split}
\|e^u-e^v\|_{L^p}^p&= \int_0^1\frac{d}{dt} \int_{\mathbb{S}^3}[e^u-e^{tv+(1-t)u}]^p dV_0 dt\\
&=\int_{\mathbb{S}^3}\left(\int_0^1p[e^u-e^{tv+(1-t)u}]^{p-1}e^{tv+(1-t)u} dt\right)(u-v)dV_0 \\
&\leq \|u-v\|_{L^{2}}\left(\int_{\mathbb{S}^3}\left(\int_0^1 p\left[e^u-e^{tv+(1-t)u}\right]^{p-1}e^{tv+(1-t)u}dt\right)^{2}dV_{0}\right)^{\frac{1}{2}}\\
&\leq C(p)\|u-v\|_{L^{2}} \left( \int_0^1 \int_{\mathbb{S}^3} e^{2(p-1)u+2tv+2(1-t)u}+e^{2p[tv+(1-t)u]}dV_{0}dt\right)^\frac{1}{2}\\
&\leq C(p,\|u\|_{H^{\frac{3}{2}}},\|v\|_{H^{\frac{3}{2}}}) \|u-v\|_{L^{2}},
\end{split}
\end{equation}
where the last inequality follows from Theorem \ref{beck}. Since $C^\infty(\mathbb{S}^3)$ is dense in $H^\frac32 (\mathbb{S}^3)$ (this follows immediately from the spherical harmonics decomposition), it is easy to see that \eqref{stimecont} actually holds for arbitrary $u,v\in H^\frac32 (\mathbb{S}^3)$. Then continuity of $u\mapsto e^u$ from $H^\frac 32(\mathbb{S}^3)$ into $L^p(\mathbb{S}^3)$ follows.

For the compactness we first notice that
$$\|\nabla e^{u}\|_{L^{1}} = \|\nabla u e^{u}\|_{L^{1}} \leq \|\nabla u\|_{L^{2}}\|e^{u}\|_{L^{2}}.$$
Then by Theorem \ref{beck}, the boundedness of $\|u\|_{H^{\frac{3}{2}}}$ implies the boundedness of $\|\nabla e^{u}\|_{L^{1}}$. Now we can conclude that the map is compact from $H^\frac{3}{2}(\mathbb{S}^3)$ into $L^1(\mathbb{S}^3 )$ using the compact embedding of $W^{1,1}(\mathbb{S}^3)$ into $L^{1}(\mathbb{S}^3)$. If we replace $u$ by $pu$, we have the compactness into $L^{p}(\mathbb{S}^3)$.
\end{proof}

\begin{defin}\label{weaksol}A weak solution $u\in H^\frac32(\mathbb{S}^3)$ of $$P^{3}_{g_{0}}u=f$$ for $f\in H^{-\frac{3}{2}}(\mathbb{S}^3)$ the dual of $H^{\frac{3}{2}}(\mathbb{S}^3)$, is a function $u\in H^{\frac{3}{2}}(\mathbb{S}^3)$ satisfying 
\begin{equation}\label{eq:weak soln}
\int_{\mathbb{S}^3}\left(P^{3}_{g_{0}}\right)^{\frac{1}{2}}u\left(P^{3}_{g_{0}}\right)^{\frac{1}{2}}\varphi dV_{0}=\langle f,\varphi \rangle
\end{equation}
for every $\varphi \in H^{\frac{3}{2}}(\mathbb{S}^3)$, where $\langle\cdot,\cdot \rangle$ is the duality bracket.
\end{defin}

\begin{prop}\label{propH3}
Let $u\in H^{\frac{3}{2}}(\mathbb{S}^3)$  be a weak solution of
\begin{equation}\label{eqCN}
P^{3}_{g_{0}}u+f=ge^{3u},
\end{equation}
where $f\in L^2(\mathbb{S}^3)$ and $g\in L^p(\mathbb{S}^3)$ for some $p>2$. Then $u\in H^3(\mathbb{S}^3)$.
\end{prop}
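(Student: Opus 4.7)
The plan is a direct bootstrap argument exploiting the spectral definition of $P^3_{g_0}$ on $\mathbb{S}^3$ together with the exponential integrability provided by Proposition \ref{propcomp}. First, I would observe that since $u\in H^{3/2}(\mathbb{S}^3)$, Proposition \ref{propcomp} (applied to $3u$) gives $e^{3u}\in L^q(\mathbb{S}^3)$ for every finite $q$. Combining with $g\in L^p$ for some $p>2$ via H\"older's inequality (with exponents $p$ and $q=2p/(p-2)$), the right-hand side
$$ h:=ge^{3u}-f $$
of the equation $P^3_{g_0}u=h$ lies in $L^2(\mathbb{S}^3)$.

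Next, I would pass to the spherical-harmonics picture to read off the Fourier coefficients of $u$. Testing the weak formulation \eqref{eq:weak soln} against $\varphi=Y^\ell_k$ for each $k\ge 1$, and using $(P^3_{g_0})^{1/2}Y^\ell_k=(\lambda_k\sqrt{1+\lambda_k})^{1/2}Y^\ell_k$, the left-hand side collapses to $(\lambda_k+1)^{1/2}\lambda_k\,u^\ell_k$, giving
$$ u^\ell_k=\frac{h^\ell_k}{(\lambda_k+1)^{1/2}\lambda_k},\qquad h^\ell_k:=\int_{\mathbb{S}^3}h\,Y^\ell_k\,dV_0. $$
The $k=0$ mode is unconstrained by this computation because $(P^3_{g_0})^{1/2}$ annihilates constants; the corresponding compatibility condition $\int_{\mathbb{S}^3}h\,dV_0=0$ holds automatically by testing the weak formulation with $\varphi\equiv 1$.

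Finally, I would substitute these coefficients into the definition \eqref{def} of $\|u\|_{H^3}^2$: the weights $(\lambda_k+1)\lambda_k^2$ cancel exactly against the squared denominators, yielding
$$ \|u\|_{H^3}^2=\|u\|_{L^2}^2+\sum_{k\ge 1}\sum_{\ell=1}^{N_k}|h^\ell_k|^2\le \|u\|_{L^2}^2+\|h\|_{L^2}^2<\infty, $$
and hence $u\in H^3(\mathbb{S}^3)$, as required. I do not anticipate any genuine obstacle: once the exponential integrability supplied by Proposition \ref{propcomp} is in hand, the argument amounts to inverting $P^3_{g_0}$ on its range through its diagonalization in spherical harmonics.
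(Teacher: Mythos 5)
Your proposal is correct and follows essentially the same route as the paper: exponential integrability of $e^{3u}$ (the paper cites Beckner's inequality \eqref{eq:beck}, you cite Proposition \ref{propcomp}, which rests on it) plus H\"older gives $h:=ge^{3u}-f\in L^2(\mathbb{S}^3)$, and then $P^3_{g_0}u\in L^2$ is equivalent to $u\in H^3(\mathbb{S}^3)$ by the spectral definition \eqref{def}. The only difference is that you unpack the final equivalence explicitly in spherical-harmonics coefficients (including the $k=0$ compatibility check), whereas the paper dismisses it with ``as clear from \eqref{def}''.
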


\begin{proof} By the Beckner's inequality \eqref{eq:beck}, we have $e^{3u}\in L^{\frac{2p}{p-2}}(\mathbb{S}^3)$. Then $ge^{3u}\in L^2(\mathbb{S}^3)$. Hence, $P^3_{g_0}u\in L^2(\mathbb{S}^3)$, which is equivalent to $u\in H^{3}(\mathbb{S}^3)$, as clear from \eqref{def}.
\end{proof}


\section{Existence of non-spherical solutions}\label{s:nonex}
In this section we will prove Theorem \ref{nonstandard}. The proof will follow the ideas in \cite{CC}, and will be a simple consequence of the following theorem about the prescribed $Q$-curvature on $\R{3}$. 

\begin{trm}\label{trmexist}
Assume that $K\in L^\infty(\R{3})$ is positive and satisfies
\begin{equation}\label{assumeK}
K(x)=O(|x|^{-s}) \text{ as }|x|\to\infty\quad \text{for some }s>0.
\end{equation}
Then for
\begin{equation}\label{defmu}
\mu\in \left(0,\min\left\{\frac{s}{6},1\right\}\right),
\end{equation}
 the problem
\begin{equation}\label{eqK}
(-\Delta)^\frac{3}{2}w=Ke^{3w}\ \mbox{in}\ \R{3},\quad \int_{\R{3}}Ke^{3w}dx=2(1-\mu)|\mathbb{S}^3|
\end{equation}
has at least one solution $w$ (in the sense of Definition \ref{deflapl}). Moreover, $w\in H^3_{\loc}(\R{3})$.
\end{trm}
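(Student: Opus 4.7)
The plan is to adapt the variational argument of Chang--Chen \cite{CC} to the present non-local setting, by reducing \eqref{eqK} to a minimization problem on $\mathbb{S}^3$ via stereographic projection and using Beckner's inequality (Theorem \ref{beck}). First I would look for a solution of the form $w = v + (1-\mu)w_0$, where $w_0(x) = \log\frac{2}{1+|x|^2}$. Since $(-\Delta)^{3/2} w_0 = 2 e^{3 w_0}$, equation \eqref{eqK} becomes
\begin{equation*}
(-\Delta)^{3/2} v + 2(1-\mu) e^{3 w_0} = K\,e^{3(1-\mu)w_0}\,e^{3v}\quad\text{on }\R{3}.
\end{equation*}
Pulling back through the stereographic projection $\Phi\colon\mathbb{S}^{3}\setminus\{p\}\to\R{3}$ via the conformal-covariance identity $P^3_{g_0}(v\circ\Phi)(\xi) = e^{-3 w_0(\Phi(\xi))}[(-\Delta)^{3/2} v](\Phi(\xi))$, and setting $V := v\circ\Phi$, the above transforms into
\begin{equation*}
P^3_{g_0}V + 2(1-\mu) = \tilde K_\mu e^{3V}\quad\text{on }\mathbb{S}^3,\quad \tilde K_\mu(\xi) := K(\Phi(\xi))\Bigl(\tfrac{1+|\Phi(\xi)|^2}{2}\Bigr)^{3\mu}.
\end{equation*}
The decay hypothesis \eqref{assumeK} combined with $\mu < s/6$ ensures that $\tilde K_\mu$ extends continuously to all of $\mathbb{S}^3$ (vanishing at the south pole $p$) and hence lies in $L^\infty(\mathbb{S}^3)$.

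Next I would minimize
\begin{equation*}
J(V) := \tfrac{1}{2}\|V\|^2_{\dot H^{3/2}} + 2(1-\mu)\int_{\mathbb{S}^3} V\,dV_0
\end{equation*}
over the constraint set $\mathcal M := \{V \in H^{3/2}(\mathbb{S}^3) : \int_{\mathbb{S}^3}\tilde K_\mu e^{3V}\,dV_0 = 2(1-\mu)|\mathbb{S}^3|\}$. A Lagrange-multiplier computation, integrating the Euler--Lagrange equation and using $\int_{\mathbb{S}^3}P^3_{g_0}V\,dV_0 = 0$, pins the multiplier down to $\lambda = 1/3$, so critical points of $J|_{\mathcal M}$ are precisely the weak solutions of the sphere equation above. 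The key analytic ingredient is Beckner's inequality \eqref{eq:beck} applied to $3V$,
\begin{equation*}
\log\!\int_{\mathbb{S}^3} e^{3V}\,dV_0 \le \log(2\pi^2) + 3\overline V + \tfrac{3}{8\pi^2}\|V\|^2_{\dot H^{3/2}},
\end{equation*}
which, combined with the lower bound $\int e^{3V}\,dV_0 \ge 2(1-\mu)|\mathbb{S}^3|/\|\tilde K_\mu\|_\infty$ from the constraint, yields
\begin{equation*}
J(V)\ge \tfrac{\mu}{2}\|V\|^2_{\dot H^{3/2}} - C \quad\text{on }\mathcal M.
\end{equation*}
The strict deficit $\mu > 0$ is essential here; it is what rules out concentration at the south pole (i.e.\ at infinity in $\R{3}$). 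The same manipulations bound $|\overline V|$ from above and below, so any minimizing sequence is uniformly bounded in $H^{3/2}(\mathbb{S}^3)$.

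The minimizer is then obtained by standard weak compactness: along a subsequence $V_n \rightharpoonup V_*$ in $H^{3/2}(\mathbb{S}^3)$, Proposition \ref{propcomp} yields $e^{3V_n}\to e^{3V_*}$ strongly in $L^p(\mathbb{S}^3)$ for every $p<\infty$, hence $V_*\in\mathcal M$, and weak lower semicontinuity of $\|\cdot\|^2_{\dot H^{3/2}}$ gives $J(V_*) = \inf_{\mathcal M} J$. Proposition \ref{propH3}, applied with $f\equiv 2(1-\mu)\in L^2$ and $g=\tilde K_\mu\in L^\infty$, then promotes $V_*$ to $H^3(\mathbb{S}^3)$. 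Unwinding the substitutions gives $w := V_*\circ\Phi^{-1} + (1-\mu) w_0 \in H^3_{\loc}(\R{3})$ solving \eqref{eqK}, with the prescribed total volume $2(1-\mu)|\mathbb{S}^3|$.

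The principal obstacle I expect is the rigorous transfer between the $\R{3}$-equation in the sense of Definition \ref{deflapl} and its sphere counterpart: since both $P^3_{g_0}$ and $(-\Delta)^{3/2}$ are non-local, one must establish the conformal-covariance identity in a weak/distributional sense and show that weak solutions on one side correspond to weak solutions on the other, matching the Schwartz test functions on $\R{3}$ with the $H^{3/2}$-duality pairing on $\mathbb{S}^3$. In the even-dimensional case of \cite{CC}, where both operators are differential, this step is essentially automatic; in our setting it requires careful use of the spherical-harmonic machinery of Section \ref{GJMS} together with the results on $(-\Delta)^{1/2}$ collected in the appendix.
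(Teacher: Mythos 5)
Your proposal is correct and follows the same high-level strategy as the paper (pass to $\mathbb{S}^3$ by stereographic projection, minimize, invoke Beckner, regularize, unwind), but it implements the minimization differently: you minimize the bare functional
$J(V)=\tfrac12\|V\|^2_{\dot H^{3/2}}+2(1-\mu)\int_{\mathbb{S}^3}V\,dV_0$
subject to the volume constraint $\int\tilde K_\mu e^{3V}\,dV_0=2(1-\mu)|\mathbb{S}^3|$ and then identify the Lagrange multiplier, whereas the paper minimizes the unconstrained, log-penalized functional
$w\mapsto \tfrac12\|w\|^2_{\dot H^{3/2}}+2(1-\mu)\int w\,dV_0-\tfrac{(1-\mu)4\pi^2}{3}\log\bigl(\int\tilde K e^{3w}\,dV_\mu\bigr)$,
which is invariant under adding constants, and fixes the gauge by taking $\overline w_k=0$ along the minimizing sequence, with the desired normalization imposed only \emph{a posteriori} by a constant shift. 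The two routes are equivalent (the paper's functional is precisely yours plus the Lagrange penalty, and on your constraint set they differ by an additive constant), so both yield the same Euler--Lagrange equation. The paper's version is marginally leaner because the gauge slice $\{\overline w=0\}$ is linear and no Lagrange-multiplier theorem is needed; in your version you should pause to check that $V\mapsto\int\tilde K_\mu e^{3V}\,dV_0$ is $C^1$ on $H^{3/2}(\mathbb{S}^3)$ with nonvanishing derivative at the minimizer — both follow easily from Proposition \ref{propcomp} and positivity of the constraint value, but they should be said. The transfer between the $\R{3}$ weak formulation (Definition \ref{deflapl}) and the $\mathbb{S}^3$ one, which you correctly single out as the nontrivial non-local step, is exactly what Lemma \ref{lemmaconf} in the paper supplies, via the density of $C^\infty(\mathbb{S}^3)$ in $H^3(\mathbb{S}^3)$ and the $L_{1/2}(\R{3})$-continuity of $(-\Delta)^{1/2}$; your remaining steps, including the Beckner constant $\tfrac{3}{8\pi^2}$, the coercivity bound $J\ge\tfrac{\mu}{2}\|V\|^2_{\dot H^{3/2}}-C$, and the two-sided control of $\overline V$, all check out.
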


\begin{proof}
Consider
\[
w_0(x)=\log\left(\frac{2}{1+|x|^2}\right),
\]
which is a spherical solution of \eqref{eq0}. Set $d V_\mu= e^{-3\mu w_{0}\circ \pi }dV_{0}$ and $\tilde K=K\circ\pi$, where $\pi$ is the stereographic projection, and consider the functional 
\[
J(w):=\int_{\mathbb{S}^3}\left(\frac{1}{2}|(P^3_{g_0})^{\frac{1}{2}}w|^{2}+ 2(1-\mu)w \right)dV_0-\frac{(1-\mu)4\pi^2}{3}\log\left(\int_{\mathbb{S}^3}\tilde{K}e^{3w} dV_\mu\right).
\]
Notice that $J(w)$ is well-defined on $H^\frac{3}{2}(\mathbb{S}^3)$ since \eqref{defmu} yields
\begin{equation}\label{boundC1}
|\tilde{K}e^{-3\mu w_0\circ\pi }|\leq C_1,
\end{equation}
and thus,
\begin{equation}\label{stimalog}
\begin{split}
&\frac{(1-\mu)4\pi^2}{3}\log\left(\int_{\mathbb{S}^3}\tilde{K}e^{3w} dV_\mu\right)\\
&\quad\le \frac{(1-\mu)4\pi^2}{3}\left(\log\left(\dashint_{\mathbb{S}^3}e^{3(w-\overline w)} dV_0\right)+3\overline w+\log(2\pi^2C_1)\right)\\
&\quad\le \frac{1-\mu}{2}\int_{\mathbb{S}^3}|(P^3_{g_0})^{\frac{1}{2}}w|^{2}dV_0 +(1-\mu)4\pi^2\overline w +C,
\end{split}
\end{equation}
where Beckner's inequality \eqref{eq:beck} is used in the last inequality. Since $J(w+c)=J(w)$ for every $c\in \R{}$,  we can choose a minimizing sequence $\{w_k\}\subset H^\frac{3}{2}(\mathbb{S}^3)$ such that
\begin{equation}\label{zeroav}
\overline w_k=\dashint_{\mathbb{S}^3}w_kdV_0= 0.
\end{equation}
We will show that $\{w_k\}$ is bounded in $H^{\frac 32}(\mathbb{S}^3)$. 
From \eqref{stimalog} and \eqref{zeroav} we obtain
\[
\frac{\mu}{2} \int_{\mathbb{S}^3} \big|(P^{3}_{g_{0}})^\frac{1}{2}w_{k}\big|^2dV_{0} \leq J(w_k)+C.
\]
With the Poincar\'e inequality
$$\|w_k\|_{L^2}\le \|w_k\|_{\dot H^{3/2}}=\|(P^3_{g_0})^{1/2}w_k\|_{L^2},$$
which follows easily from \eqref{zeroav} and the spherical harmonics decomposition, we conclude that $\{w_k\}$ is bounded in $H^\frac{3}{2}(\mathbb{S}^3)$.
Hence, it has a subsequence weakly converging in $H^\frac{3}{2}(\mathbb{S}^3)$ to a minimizer $u$. Indeed, up to a subsequence
$$\lim_{k\to\infty}\log\left(\int_{\mathbb{S}^3}\tilde Ke^{3w_k} dV_\mu\right) =\log\left(\int_{\mathbb{S}^3}\tilde K e^{3u} dV_\mu\right)$$
by \eqref{boundC1} and the compactness of $w\mapsto e^w$ from $H^{\frac{3}{2}}(\mathbb{S}^3)$ to $L^{p}(\mathbb{S}^3)$ for every $p>1$ in Proposition \ref{propcomp}. Moreover, by convexity and weak convergence we have
$$\liminf_{k\to\infty}\int_{\mathbb{S}^3}\left(\frac{1}{2}|(P^3_{g_0})^{\frac{1}{2}}w_k|^{2}+ 2(1-\mu) w_k \right)dV_0 \ge \int_{\mathbb{S}^3}\left(\frac{1}{2}|(P^3_{g_0})^{\frac
{1}{2}}u|^{2}+ 2(1-\mu) u \right)dV_0.$$
This shows that $u$ is a minimizer of $J$.
In particular, $u$ is a weak solution of 
\begin{equation}\label{umin}
P^3_{g_0}u+2(1-\mu)=\frac{(1-\mu)4\pi^2\tilde{K}e^{-3\mu w_0\circ\pi}e^{3u}}{\int_{\mathbb{S}^3}\tilde{K}e^{3u}\ud V_\mu},
\end{equation}
in the sense of Definition \ref{weaksol}.
Choose a constant $C$ such that $\tilde u:=u+C$ satisfies
\[
\int_{\mathbb{S}^3}\tilde{K}e^{3\tilde u} d V_\mu=(1-\mu)4\pi^2.
\]
Then $\tilde u$ solves
\[
P^3_{g_0}\tilde u+2(1-\mu)=\tilde{K}e^{-3\mu w_0\circ\pi}e^{3\tilde u}.
\]
By \eqref{boundC1} we know that $\tilde{K}e^{-3\mu w_0\circ\pi}\in L^\infty(\mathbb{S}^3)$. Hence, $\tilde{u}\in H^3(\mathbb{S}^3)$ by Proposition \ref{propH3}. It follows from Lemma \ref{lemmaconf} below and
$$\int_{\R{3}}Ke^{3w}dx=\int_{\mathbb{S}^3}\tilde K e^{3\tilde u}dV_\mu =(1-\mu)4\pi^2$$
that $w:=\tilde{u}\circ \pi^{-1}+(1-\mu)w_0\in H^3_{\loc}(\R{3})$ is a solution of \eqref{eqK}.
\end{proof}

\begin{lemma}\label{lemmaconf}
If $\pi$ is the stereographic projection from $\mathbb{S}^3\setminus\{p\}$ to $\mathbb{R}^{3}$, then the pull back of the operator $(-\Delta)^{\frac{3}{2}}$ under $\pi$ is the operator $P^{3}_{g_{0}}$. More precisely, if $u\in H^3(\mathbb{S}^3)$, then
\begin{equation}\label{eqpullback}
(P^{3}_{g_{0}}u)\circ \pi^{-1}=e^{-3w_{0}}(-\Delta)^{\frac{3}{2}}(u\circ \pi^{-1}),
\end{equation}
in the sense of tempered distributions.
\end{lemma}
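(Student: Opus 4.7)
The plan is to deduce \eqref{eqpullback} from the conformal covariance of the intertwining operator $P^{3}$ in dimension three, combined with the naturality of intrinsically defined operators under isometries. First I would observe that $\pi$ is an isometry from $(\mathbb{S}^{3}\setminus\{p\},\,g_{0})$ onto $(\mathbb{R}^{3},\,\tilde g)$, where $\tilde g := (\pi^{-1})^{*}g_{0} = e^{2w_{0}}|dx|^{2}$. Since $P^{3}_{g_{0}}$ is, through its spectral definition in terms of $\Delta_{g_{0}}$, intrinsically built from the Riemannian metric, naturality under isometries gives
\begin{equation*}
(P^{3}_{g_{0}}u)\circ \pi^{-1} = P^{3}_{\tilde g}\tilde u, \qquad \tilde u := u\circ \pi^{-1}.
\end{equation*}

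Next I would invoke the conformal covariance of $P^{3}$: for a conformal change $\tilde g = e^{2w}g$ of a $3$-dimensional metric one has $P^{3}_{\tilde g}\varphi = e^{-3w}P^{3}_{g}\varphi$, the defining intertwining property of this fractional GJMS operator with respect to the conformal group (cf.\ \cite{Bra, GZ}). Applied with $g$ the Euclidean metric on $\mathbb{R}^{3}$ (so that $P^{3}_{g} = (-\Delta)^{3/2}$) and $w = w_{0}$, this produces
\begin{equation*}
P^{3}_{\tilde g}\tilde u = e^{-3w_{0}}(-\Delta)^{3/2}\tilde u,
\end{equation*}
and chaining with the previous display yields exactly \eqref{eqpullback}. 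To read this as an equality between tempered distributions on $\mathbb{R}^{3}$, I would verify that $\tilde u,\Delta\tilde u\in L_{1/2}(\mathbb{R}^{3})$: $\tilde u$ is bounded, because $u\in H^{3}(\mathbb{S}^{3})\hookrightarrow C^{0}(\mathbb{S}^{3})$ in dimension three, and $\Delta\tilde u$ decays at infinity thanks to the quadratic decay of the inverse stereographic Jacobian.

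The hard part is justifying the non-local conformal covariance. One cannot reduce it to a product of covariant second-order factors: the paper's own factorization $P^{3}_{g_{0}} = (-\Delta_{g_{0}}+1)^{1/2}(-\Delta_{g_{0}})$ does \emph{not} exhibit $P^{3}_{g_{0}}$ as a product of conformal Laplacians, because the Yamabe shift on $\mathbb{S}^{3}$ is $R_{g_{0}}/8 = 3/4$, not $1$. The cleanest route is to realize both $P^{3}_{g_{0}}$ and $(-\Delta)^{3/2}$ as Dirichlet-to-Neumann maps of degenerate-elliptic extension problems, on the $4$-ball (with its hyperbolic metric) and on $\mathbb{R}^{4}_{+}$ respectively, and to transport the former extension to the latter through the M\"obius extension of $\pi$, as in the scattering approach of Graham-Zworski \cite{GZ}. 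A more elementary alternative is to verify the identity by density on spherical harmonics $Y^{\ell}_{k}$, for which the left-hand side reduces to $k(k+1)(k+2)\,Y^{\ell}_{k}\circ\pi^{-1}$, and then to compute $(-\Delta)^{3/2}$ of the explicit stereographic pull-back; this is self-contained but computationally heavy.
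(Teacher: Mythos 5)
Your conceptual core is the same as the paper's: both ultimately rest on Branson's conformal covariance of the intertwining operator (the paper cites \cite{Bra2}; you cite \cite{Bra, GZ}). Your decomposition into ``naturality under isometries'' plus ``conformal covariance'' is a clean way to phrase the smooth case, and your observation that $P^3_{g_0}=(-\Delta_{g_0}+1)^{1/2}(-\Delta_{g_0})$ is \emph{not} a product of conformal Laplacians is correct and worth noting, though it is a non-issue here since one cites the covariance directly rather than trying to reduce it to second-order factors.

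The gap is in the passage from $u\in C^\infty(\mathbb{S}^3)$ to $u\in H^3(\mathbb{S}^3)$. You only check that both sides of \eqref{eqpullback} are well-defined tempered distributions; but an identity known on a dense subspace does not extend automatically --- one must run a density argument and verify that both sides depend continuously (into $\mathcal{S}'(\mathbb{R}^3)$) on $u\in H^3(\mathbb{S}^3)$. This is precisely the bulk of the paper's proof: it establishes the quantitative estimate $\|\Delta(u\circ\pi^{-1})\|_{L^2(\mathbb{R}^3)}\le C\|u\|_{H^3(\mathbb{S}^3)}$ via H\"older and the Sobolev embedding $\|\nabla^2 u\|_{L^4}\le C\|u\|_{H^3}$, then approximates $u$ by $u_k\in C^\infty(\mathbb{S}^3)$ and tracks convergence of both sides. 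Your well-definedness check is also not quite right in the place it matters: you assert that $\Delta\tilde u$ ``decays at infinity thanks to the quadratic decay of the inverse stereographic Jacobian,'' but $H^3(\mathbb{S}^3)$ does not embed into $C^2(\mathbb{S}^3)$, so $\Delta u$ is not pointwise controlled and one cannot argue pointwise decay of $\Delta\tilde u$. The correct route (taken by the paper) is an integral estimate giving $\Delta\tilde u\in L^2(\mathbb{R}^3)\subset L_{1/2}(\mathbb{R}^3)$. Your two suggested routes to prove the covariance itself (scattering extension, or explicit computation on spherical harmonics) are viable but supererogatory: the paper, like you, ultimately treats that as a citation, and puts its effort into the density argument you omitted.
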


\begin{proof}
We know from \cite{Bra2} that \eqref{eqpullback} holds for $u\in C^{\infty}(\mathbb{S}^3)$. For $u\in H^{3}(\mathbb{S}^3)$ it follows from standard approximations. Notice first that $\Delta (u\circ \pi^{-1}) \in L^{2}(\R{3})$. Indeed, if we set $U=u\circ \pi^{-1}$, then  
$$|\nabla^{2}U|^2\leq C \left(|(\nabla^{2}u)\circ \pi^{-1}|^2 \frac{1}{(1+|x|^{2})^4} + |(\nabla u) \circ \pi^{-1}|^2 \frac{1}{(1+|x|^{2})^{3}}\right).$$
Therefore, if we consider the first term on the right-hand side, we get with H\"older's inequality and Sobolev's embedding
$$\int_{\mathbb{R}^{3}} \left(|(\nabla^{2}u)\circ \pi^{-1}|^2 e^{\frac{3}{2}w_{0}} \right) \left(e^{-\frac{3}{2}w_{0}} (1+|x|^{2})^{-4}\right) dx \leq C \|\nabla^{2} u\|^{2}_{L^{4}}\le C\|u\|_{H^3}^2.$$
A similar inequality holds for the second term and we get 
\begin{equation}\label{stimaU}
\|\Delta (u\circ\pi^{-1})\|_{L^2(\R{3})}\le C\|u\|_{H^{3}(S^3)}.
\end{equation}
Since $\Delta U \in L^{2}(R^{3})\subset L_{1/2}(\R{3})$ (by H\"older's inequality),  $(-\Delta)^{\frac{1}{2}}(-\Delta U)$ is well defined.

Now pick a sequence $\{u_{k}\}\subset C^{\infty}(\mathbb{S}^3)$ such that $u_{k} \to u$ in $H^{3}(\mathbb{S}^3)$. By  \eqref{eqpullback} we have 
$$(P^{3}_{g_{0}}u_{k})\circ\pi^{-1}=e^{-3w_{0}}(-\Delta)^{\frac{3}{2}}(u_{k}\circ \pi^{-1}).$$
The left hand side converges to $(P^{3}_{g_0}u)\circ \pi^{-1}$ in the sense of tempered distribution, since $P^{3}_{g_{0}}u_{k}\to P^{3}_{g_{0}}u$ in $L^2(\mathbb{S}^3)$. On the other hand, \eqref{stimaU} implies that 
$$
\Delta (u_{k}\circ \pi^{-1})\to \Delta (u\circ \pi^{-1})\quad \text{in } L^{3}(\R{3})\text{ and hence in }L_{1/2}(\R{3}),
$$
which implies $(-\Delta)^{\frac{3}{2}}(u_{k}\circ \pi^{-1})\to (-\Delta)^{\frac{3}{2}}(u\circ \pi^{-1})$ in $\mathcal{S}'(\R{3})$. Since $e^{-3w_0}\varphi\in \mathcal{S}(\R{3})$ for every $\varphi\in \mathcal{S}(\R{3})$, we also have  $e^{-3w_0}(-\Delta)^{\frac{3}{2}}(u_{k}\circ \pi^{-1})\to e^{-3w_0} (-\Delta)^{\frac{3}{2}}(u\circ \pi^{-1})$ in  the sense of tempered distributions and the proof is complete.
\end{proof}

\begin{proof}[Proof of Theorem \ref{nonstandard}]
Choose $K(x)=2e^{-3a|x|^2}$ for some $a>0$. It is clear that $K$ satisfies the assumptions in Theorem \ref{trmexist} for any positive $s$. Fix $\mu=1-\frac{V}{|\mathbb{S}^3|}\in (0,1)$ and let $w$ be the corresponding solution of \eqref{eqK}. Since $(-\Delta)^\frac{3}{2}(-a|x|^2)=0$, we have that $u:=w-a|x|^2$ is a solution of \eqref{eq0}. Moreover, 
\[
\int_{\R{3}}e^{3u}dx=\frac{1}{2}\int_{\R{3}}Ke^{3w}dx=(1-\mu)|\mathbb{S}^3|=V.
\]
Thus, \eqref{area} is satisfied. Finally, by noticing that $u\in H^3_{\loc}(\R{3})\hookrightarrow C^{1,\alpha}_{\loc}(\R{3})$ for some $\alpha>0$, we have $(-\Delta)^\frac{3}{2}u=2e^{3u}\in C^{1,\alpha}_{\loc}(\R{3})$. By the Schauder estimates for fractional Laplacian equations (Corollary \ref{corschauder} in the appendix), $\Delta u\in C^{2,\alpha}_{\loc}(\R{n})$, and thus, $u\in C^{4,\alpha}_{\loc}(\R{n})$ by the classical Schauder estimates. In particular, $e^{3u}\in C^{4,\alpha}_{\loc}(\R{n})$. By the bootstrap procedure, we have that $u\in C^\infty(\R{3})$.
\end{proof}

\begin{rem}
In Theorem \ref{trmexist}, if one additionally assumes that $K$ is radially symmetric, 
one can prove the existence of a radially symmetric solution of \eqref{eqK}. Indeed, it suffices to minimize $J$ among rotationally symmetric functions only. 
Since $J$ is invariant under rotations, the minimizer will be a critical point of $J$ in all of $H^\frac32 (\mathbb{S}^3)$, i.e., it solves \eqref{umin}, see e.g. \cite{Sma}. 
Since we chose $K(x)=2e^{-3a|x|^2}$ in the proof of Theorem \ref{nonstandard}, 
it follows that for $V\in (0,2\pi^2)$ we can find a solution to \eqref{eq0}-\eqref{area} which is radially symmetric. 
Taking the results of \cite{HM} and \cite{W-Y} into account, we expect that for each $V\in (0,2\pi^2)$ there are many non-radially symmetric solutions to Problem \eqref{eq0}-\eqref{area}.
\end{rem}

\section{Estimates and technical lemmas}\label{s:estimates}

In this section, we establish some estimates for smooth solutions $u$ of \eqref{eq0}-\eqref{area}.
\begin{lemma}\label{upper bound}
Let $u$ be a smooth solution of \eqref{eq0}-\eqref{area} and $v$ be as in \eqref{eqv}. Then there exists a positive constant $C$ such that for $|x|\ge 4$,
\[
-v(x)\leq \alpha\log|x| +C.
\]
\end{lemma}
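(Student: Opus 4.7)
\medskip

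\noindent\textbf{Proof plan.}
The natural starting point is to write
\begin{equation*}
-v(x)-\alpha\log|x| \;=\; \frac{1}{\pi^2}\int_{\R{3}} \log\frac{|x-y|}{|x|\,|y|}\, e^{3u(y)}\,\ud y,
\end{equation*}
which follows directly from the definitions of $v$ in \eqref{eqv} and $\alpha$ in \eqref{eqalpha}. Thus the lemma reduces to showing that the right-hand side is bounded above by a constant uniformly in $|x|\ge 4$. Since $e^{3u(y)}>0$, only the \emph{positive part} of the logarithmic kernel is relevant, so I would split $\R{3}$ according to the size of $|y|$ and bound $\log\frac{|x-y|}{|x|\,|y|}$ from above in each region by a function of $|y|$ alone.

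First I would handle the far region $\{|y|\ge 1\}$. There, using $|x|\ge 4\ge 1$ one has
\begin{equation*}
|x-y|\le |x|+|y|\le 2\max(|x|,|y|)\le 2\,|x|\,|y|,
\end{equation*}
so $\log\frac{|x-y|}{|x|\,|y|}\le \log 2$, and the contribution is controlled by $(\log 2)\int_{\R{3}}e^{3u}\,\ud y=\pi^2\alpha\log 2<\infty$ by \eqref{area}.

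In the near region $\{|y|\le 1\}$, from $|x|\ge 4$ I get $|x-y|\le |x|+|y|\le 2|x|$, hence
\begin{equation*}
\log\frac{|x-y|}{|x|\,|y|}\le \log\frac{2}{|y|}.
\end{equation*}
Since $u$ is smooth, $e^{3u}$ is bounded on $\overline{B_1}$ by some constant $M$, and the singular factor $\log(2/|y|)$ is integrable near the origin in $\R{3}$. Therefore the contribution of the near region is bounded by $M\int_{|y|\le 1}\log(2/|y|)\,\ud y<\infty$, completing the estimate. Adding the two bounds and dividing by $\pi^2$ gives the constant $C$ claimed in the lemma.

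There is no real obstacle here beyond the bookkeeping: the decay of $-v$ at infinity is governed by the total mass $\alpha$ of $e^{3u}$, and the only delicate issue is the logarithmic singularity of the kernel at $y=0$, which is absorbed using smoothness of $u$ on the compact set $\overline{B_1}$ together with the fact that $\log|y|\in L^1_{\loc}(\R{3})$.
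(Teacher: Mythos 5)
Your proof is correct. The paper dispenses with the details of this lemma by citing Lemma~9 of \cite{mar1} (and Lemma~2.1 of \cite{lin1}), but the elementary argument you give---rewriting $-v(x)-\alpha\log|x|=\frac{1}{\pi^2}\int_{\R{3}}\log\frac{|x-y|}{|x|\,|y|}e^{3u(y)}\,\ud y$, bounding the kernel by $\log 2$ on $\{|y|\ge 1\}$ via $|x-y|\le 2|x||y|$, and by the locally integrable $\log(2/|y|)$ on $\{|y|\le 1\}$ using smoothness of $u$ on $\overline{B_1}$---is precisely the standard proof underlying those references, so you have simply filled in what the paper delegates to a citation.
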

\begin{proof}
This is a special case of Lemma 9 in \cite{mar1} (originally proven in dimension $4$ in \cite[Lemma 2.1]{lin1}).
\end{proof}

\begin{lemma}\label{equal} 
Let $u$ be a smooth solution of \eqref{eq0}-\eqref{area} and $v$ be as in \eqref{eqv}.  Then $\Delta v\in L_{1/2}(\R{3})$  and 
$$(-\Delta)^{\frac32} v=(-\Delta)^{\frac32}u=2e^{3u}.$$
\end{lemma}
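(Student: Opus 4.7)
The plan is to work directly from the integral representation
\begin{equation*}
v(x) = \frac{1}{\pi^2}\int_{\R{3}} \log\frac{|y|}{|x-y|}\, e^{3u(y)}\,dy,
\end{equation*}
compute $\Delta v$ as an $L^1_{\loc}$ function, and then verify the distributional equation against Schwartz test functions. Since $\Delta_x \log(|y|/|x-y|) = -\Delta_x \log|x-y| = -|x-y|^{-2}$ pointwise for $x\neq y$, and since $|x-y|^{-2}$ is locally integrable in $\R{3}$ so that this identity also holds distributionally, passing the Laplacian onto the kernel via Fubini (justified because $e^{3u}\in L^1(\R{3})$ by \eqref{area}) yields
\begin{equation*}
\Delta v(x) = -\frac{1}{\pi^2}\int_{\R{3}} \frac{e^{3u(y)}}{|x-y|^2}\,dy.
\end{equation*}
Standard singular-integral theory upgrades this to $v\in W^{2,1}_{\loc}(\R{3})$, so that Definition \ref{deflapl} applies.

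To verify $\Delta v\in L_{1/2}(\R{3})$, I would apply Tonelli:
\begin{equation*}
\int_{\R{3}}\frac{|\Delta v(x)|}{1+|x|^4}\,dx \le \frac{1}{\pi^2}\int_{\R{3}} e^{3u(y)}\,\Phi(y)\,dy,\qquad \Phi(y):=\int_{\R{3}}\frac{dx}{|x-y|^2(1+|x|^4)}.
\end{equation*}
Splitting the $x$-integral into $\{|x-y|\le 1\}$, $\{|x-y|> 1,\,|x|\le 2|y|\}$, and $\{|x|>2|y|\}$ shows $\Phi$ is bounded uniformly in $y$ (in fact $\Phi(y)=O(|y|^{-2})$ as $|y|\to\infty$). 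Together with \eqref{area}, this gives $\Delta v\in L_{1/2}$.

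The core of the argument is the distributional identity $(-\Delta)^{3/2}v=2e^{3u}$. For any $\varphi\in\mathcal{S}(\R{3})$, Fubini gives
\begin{equation*}
\int_{\R{3}}(-\Delta v)\,(-\Delta)^{1/2}\varphi\,dx = \frac{1}{\pi^2}\int_{\R{3}} e^{3u(y)}\left(\int_{\R{3}}\frac{(-\Delta)^{1/2}\varphi(x)}{|x-y|^2}\,dx\right)dy.
\end{equation*}
The decisive observation is that $|x|^{-2}/(2\pi^2)$ is precisely the Riesz kernel of $(-\Delta)^{-1/2}$ on $\R{3}$: for any $f\in\mathcal{S}(\R{3})$,
\begin{equation*}
(-\Delta)^{-1/2}f(y) = \frac{1}{2\pi^2}\int_{\R{3}}\frac{f(x)}{|x-y|^2}\,dx.
\end{equation*}
Applying this with $f=(-\Delta)^{1/2}\varphi$ (and using $(-\Delta)^{-1/2}(-\Delta)^{1/2}\varphi=\varphi$, checked on the Fourier side since both sides are continuous) collapses the inner bracket to $2\pi^2\varphi(y)$, and the right-hand side reduces to $\int 2e^{3u}\varphi\,dx$. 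Combined with the hypothesis $(-\Delta)^{3/2}u=2e^{3u}$ from \eqref{eq0}, this proves the lemma.

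The main technical obstacle is the Fubini step in the last display: one must verify absolute convergence of the double integral, which reduces to combining the Schwartz-type bound $|(-\Delta)^{1/2}\varphi(x)|\lesssim (1+|x|)^{-4}$ (coming from \eqref{stimaphi}) with the same uniform-in-$y$ estimate already used to bound $\Phi$. The identification of $|x|^{-2}$ with the Riesz kernel of $(-\Delta)^{-1/2}$, while standard, is the only essentially non-routine ingredient.
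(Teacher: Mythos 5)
Your proof is correct and rests on the same pivot as the paper's: $-\Delta v$ is the convolution of $2e^{3u}$ with the Riesz kernel $K(x)=\frac{1}{2\pi^2|x|^2}$ of $(-\Delta)^{-1/2}$ on $\R{3}$, so that applying $(-\Delta)^{1/2}$ recovers $2e^{3u}$. The implementations differ, though. The paper packages the hard step into Lemma \ref{lemmafund}: it proves $(-\Delta)^{1/2}(K*f)=f$ first for $f\in C^\infty_c$ (by citing Lieb--Loss), shows $K*f\in L_{1/2}(\R{3})$ for general $f\in L^1$ by splitting $K=K_1+K_2$ with $K_1\in L^{3/2-\ve}$, $K_2\in L^{3/2+\ve}$ and applying Young plus H\"older, and then passes to the limit along $f_k\to f$ in $L^1$ using \eqref{stimaphi}. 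You instead verify the duality pairing \eqref{deffrac} directly by Fubini, with the decay $|(-\Delta)^{1/2}\varphi(x)|\lesssim(1+|x|)^{-4}$ from \eqref{stimaphi} against your uniform bound on $\Phi$ supplying absolute convergence; your Tonelli estimate for $\Delta v\in L_{1/2}(\R{3})$ similarly replaces the paper's Young-inequality decomposition. Both routes are valid, yours being a bit more self-contained and the paper's isolating a reusable statement about $L^1$ data. One small repair is needed in your write-up: you state the Riesz representation of $(-\Delta)^{-1/2}$ ``for any $f\in\mathcal{S}(\R{3})$'' but then apply it to $f=(-\Delta)^{1/2}\varphi$, which is \emph{not} Schwartz (it only decays like $|x|^{-4}$). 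The identity does hold for this $f$ because $(-\Delta)^{1/2}\varphi\in L^1\cap L^\infty$, which is enough both for the Riesz integral to converge and for the Fourier-side cancellation $|\xi|^{-1}\cdot|\xi|\hat\varphi(\xi)=\hat\varphi(\xi)$ to make sense (recall $|\xi|\hat\varphi$ vanishes at the origin and $|\xi|^{-1}\in L^1_{\loc}(\R{3})$); but as written the hypothesis on $f$ is stated too narrowly to cover the very application you make of it.
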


\begin{proof} Differentiating under the integral in \eqref{eqv} we obtain
$$-\Delta v(x)=-\frac{1}{2\pi^2}\int_{\R{3}}\frac{f(y)}{|x-y|^2}dy,$$
where $f:= 2e^{3u}\in L^1(\R{3})$. 
Then the conclusion follows at once from Lemma \ref{lemmafund}.
\end{proof}

\begin{lemma}\label{lem:constant} 
Let $w\in L_{1/2}(\R{3})$ satisfy $(-\Delta)^{\frac12} w=0$ in $\R{3}$. Then $w$ is a constant.
\end{lemma}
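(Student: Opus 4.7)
The plan is to prove that $w$ must be a polynomial via a Fourier-analytic argument in the tempered-distribution framework, and then to use the weighted integrability encoded in $L_{1/2}$ to force that polynomial to have degree zero.

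First note that any $w\in L_{1/2}(\R{3})$ is a tempered distribution, since for every $\varphi\in\mathcal{S}(\R{3})$
$$
|\langle w,\varphi\rangle|\le \|(1+|x|^4)\varphi\|_{L^\infty}\int_{\R{3}}\frac{|w(x)|}{1+|x|^4}\,dx,
$$
so $\hat w\in\mathcal{S}'(\R{3})$. For $\varphi\in\mathcal{S}(\R{3})$ the function $(-\Delta)^{1/2}\varphi$ decays like $|x|^{-4}$ at infinity (cf.\ \cite{Sil}), which is exactly the decay compatible with pairing against $L_{1/2}$. The equation $(-\Delta)^{1/2}w=0$ in the distributional sense (the natural analogue of Definition~\ref{deflapl} for the half-Laplacian), combined with Parseval, yields
$$
\langle \hat w,|\xi|\hat\varphi\rangle = \int_{\R{3}} w\,(-\Delta)^{1/2}\varphi\,dx = 0\qquad\forall\,\varphi\in\mathcal{S}(\R{3}).
$$
Since $\varphi\mapsto\hat\varphi$ is a bijection of $\mathcal{S}(\R{3})$, equivalently $\langle\hat w,|\xi|\psi\rangle=0$ for every $\psi\in\mathcal{S}(\R{3})$.

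Next, given $\chi\in C_c^\infty(\R{3}\setminus\{0\})$, the function $\psi:=\chi/|\xi|$ still belongs to $C_c^\infty(\R{3}\setminus\{0\})\subset\mathcal{S}(\R{3})$, so $\langle \hat w,\chi\rangle=\langle\hat w,|\xi|\psi\rangle=0$. Hence $\mathrm{supp}\,\hat w\subseteq\{0\}$, and by the classical structure theorem for distributions supported at a point, $\hat w$ is a finite linear combination of $\delta_0$ and its derivatives; taking the inverse Fourier transform, $w$ is a polynomial on $\R{3}$. If $w$ had degree $d\ge 1$, its leading homogeneous part would give $\int_{2^k\le|x|\le 2^{k+1}}|w|\,dx\gtrsim 2^{k(d+3)}$ for large $k$, so
$$
\int_{\R{3}}\frac{|w(x)|}{1+|x|^4}\,dx \gtrsim \sum_{k\ge k_0} 2^{k(d-1)}=+\infty,
$$
contradicting $w\in L_{1/2}(\R{3})$. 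Therefore $d=0$ and $w$ is a constant.

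The main technical point is the Parseval identity in the first step: the symbol $|\xi|\hat\varphi$ is not Schwartz because $|\xi|$ fails to be smooth at the origin, so one cannot directly invoke the $\mathcal{S}',\mathcal{S}$ duality. This is bypassed either by approximating $w$ by the smooth functions $w_\varepsilon:=w*\rho_\varepsilon$, for which $(-\Delta)^{1/2}w_\varepsilon\equiv 0$ pointwise and classical Parseval applies after a further cutoff at infinity, or by reinterpreting the argument through the Caffarelli--Silvestre extension of $w$ to $\R{3}\times(0,\infty)$: the harmonic extension $W$ satisfies $\partial_t W|_{t=0}\equiv 0$, hence extends by Schwarz reflection to a harmonic function on $\R{4}$ with polynomial growth (inherited from the $L_{1/2}$ bound on $w$ via the Poisson integral), which is then a harmonic polynomial and so $w=W(\cdot,0)$ is a polynomial; the degree-zero conclusion then follows exactly as above.
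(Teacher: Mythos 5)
Your argument is correct but follows a genuinely different route from the paper's. The paper stays in physical space: it applies the interior Schauder estimate of Proposition~\ref{propschauder} to the rescalings $w_r(y)=w(ry)$, which still satisfy $(-\Delta)^{1/2}w_r=0$, and lets $r\to\infty$ to force $\nabla w$ to be constant; thus $w$ is affine, and the only affine functions in $L_{1/2}(\R{3})$ are constants. You instead work on the Fourier side, showing $\mathrm{supp}\,\hat w\subseteq\{0\}$, invoking the structure theorem for distributions supported at a point to conclude that $w$ is a polynomial, and then ruling out degree $\ge 1$ by the same $L_{1/2}$ weight computation. Your own worry about the Parseval step is in fact moot for the argument as you actually run it: you only ever pair $\hat w$ against $\chi=|\xi|\psi$ with $\psi\in C_c^\infty(\R{3}\setminus\{0\})$, and for such $\psi$ both $\psi$ and $|\xi|\psi$ lie in $C^\infty_c(\R{3}\setminus\{0\})\subset\mathcal{S}(\R{3})$; hence, taking $\varphi\in\mathcal{S}(\R{3})$ with $\hat\varphi=\psi$, the function $(-\Delta)^{1/2}\varphi=\mathcal{F}^{-1}\chi$ is Schwartz and the chain $\langle\hat w,\chi\rangle=\int_{\R{3}}w\,(-\Delta)^{1/2}\varphi\,dx=\langle(-\Delta)^{1/2}w,\varphi\rangle=0$ uses only \eqref{lapldual} and the definition of the Fourier transform on $\mathcal{S}'$, with no mollification or Caffarelli--Silvestre extension needed (though both of your sketched alternatives would also work). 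In summary, the paper's route avoids the Fourier transform entirely and reuses the Schauder machinery it must develop in the appendix anyway, while yours is shorter modulo the structure theorem for point-supported distributions and is the more classical Liouville-type argument.
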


\begin{proof}
The lemma follows from the estimates for $w$ and a scaling argument.
By Proposition \ref{propschauder} in the appendix, we have 
\[
\|\nabla ^2 w\|_{L^\infty(B_1)}\le C\int_{\R{3}}\frac{|w(x)|}{1+|x|^4} dx,
\]
where $C>0$ is a universal constant. Given $x\in\R{3}$, we choose $r>|x|$ and set $w_r(y):=w(ry)$. Then,
\[
|\nabla w(x)-\nabla w(0)|=\frac {1}{r}\left|\nabla w_r\left(\frac{x}{r}\right)-\nabla w_r(0)\right|\le \frac{|x|}{r^{2}}\|\nabla^2 w_r\|_{L^\infty(B_1)}.
\]
Since $(-\Delta)^\frac{1}{2} w_r=0$ in $\R{3}$, we have
\[
\|\nabla^2 w_r\|_{L^\infty(B_1)}\le C\int_{\R{3}}\frac{|w_r(x)|}{1+|x|^4} dx=C r\int_{\R{3}}\frac{|w(x)|}{r^4+|x|^4} dx.
\]
Thus,
\[
|\nabla w(x)-\nabla w(0)|\le C \frac{|x|}{r}\int_{\R{3}}\frac{|w(x)|}{r^4+|x|^4} dx\to 0\quad \text{as }r\to\infty.
\]
Then $\nabla w(x)=\nabla w(0)$. Since $x$ was arbitrary, $\nabla w$ is a constant and $w$ is an affine function. On the other hand, it is clear that the only affine functions in $L_{1/2}(\R{3})$ are the constant functions.
\end{proof}


\begin{prop}\label{liou} 
Let $u$ be a smooth solution of \eqref{eq0}-\eqref{area} and $v$ be as \eqref{eqv}. Let $p= u-v$. Then $p$ is a polynomial of degree $0$ or $2$. Moreover, $\Delta p\le 0$ and $\sup_{\R{3}}p<\infty$.
\end{prop}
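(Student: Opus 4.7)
The plan is to first identify $\Delta p$ as a constant via Lemma~\ref{lem:constant}, then pass to tempered distributions to conclude $p$ is a polynomial of degree at most $2$, and finally use the finite-volume hypothesis \eqref{area} to rule out the degree-$1$ case and to derive $\Delta p\le 0$ and $\sup_{\R{3}}p<\infty$. For the first step, Lemma~\ref{equal} together with $u=v+p$ gives $(-\Delta)^{3/2}p=0$. Writing $(-\Delta)^{3/2}=(-\Delta)^{1/2}\circ(-\Delta)$, and noting that $\Delta u,\Delta v\in L_{1/2}(\R{3})$ (the former by the standing hypothesis on $u$, the latter by Lemma~\ref{equal}), this identity amounts to $(-\Delta)^{1/2}(-\Delta p)=0$ in the tempered-distributional sense. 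Applying Lemma~\ref{lem:constant} with $w=-\Delta p$ then yields $\Delta p\equiv -a$ for some constant $a\in\mathbb{R}$, so that $h:=p+\tfrac{a}{6}|x|^2$ is smooth and harmonic on $\R{3}$.

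To conclude that $p$ is a polynomial of degree at most $2$, I would verify that $p\in\mathcal{S}'(\R{3})$ and pass to the Fourier side. The function $u$ is tempered since $u\in L_{1/2}(\R{3})$; for $v$, the lower bound $v(x)\ge -\alpha\log|x|-C$ from Lemma~\ref{upper bound} is complemented by a matching upper bound $v(x)\le C(1+\log|x|)$ at infinity, obtained by splitting the integral in \eqref{eqv} into the regions $|x-y|\le 1$, $1\le|x-y|\le 2|x|$, and $|x-y|\ge 2|x|$ and using $e^{3u}\in L^1$ together with smoothness of $u$. Hence $p\in\mathcal{S}'(\R{3})$, and Fourier transforming $(-\Delta)^{3/2}p=0$ gives $|\xi|^3\hat p=0$. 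Since $|\xi|^3>0$ off the origin, $\mathrm{supp}\,\hat p\subseteq\{0\}$; and since $|\xi|^3$ vanishes at $0$ classically to order $2$ but not $3$, a direct check shows $|\xi|^3\partial^\alpha\delta_0=0$ exactly for $|\alpha|\le 2$. Hence $\hat p$ is a finite linear combination of $\{\partial^\alpha\delta_0:|\alpha|\le 2\}$, and $p$ is a polynomial of degree at most $2$.

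Write $p(x)=b+c\cdot x+\tfrac{1}{2}x^T A x$ with $A$ symmetric. Using the lower bound of Lemma~\ref{upper bound}, for $|x|\ge 4$ we have $e^{3u(x)}\ge e^{-3C}|x|^{-3\alpha}e^{3p(x)}$, so \eqref{area} forces $\int_{|x|\ge 4}e^{3p(x)}|x|^{-3\alpha}\,dx<\infty$. If $A$ had a positive eigenvalue $\lambda$ with eigenvector $\omega$, then $p(x)\ge\tfrac{\lambda}{4}|x|^2$ in a small cone around $\pm\omega$ for $|x|$ large, making the integral diverge; hence $A\preceq 0$, which gives $\Delta p=\mathrm{tr}(A)\le 0$. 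Similarly, if $c\cdot\omega\ne 0$ for some $\omega\in\ker A$, then $p$ grows linearly and unboundedly along one of $\pm\omega$, again contradicting the integrability. Therefore $c$ is orthogonal to $\ker A$, which (after completing the square in the range of $A$) is equivalent to $p$ being bounded above on $\R{3}$. Finally, if $A=0$ then necessarily $c=0$ and $p\equiv b$ has degree $0$; if $A\ne 0$, then $p$ has degree exactly $2$.

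The main obstacle is the middle step: verifying $p\in\mathcal{S}'(\R{3})$ via the matching upper bound on $v$, and extracting from the distributional identity $|\xi|^3\hat p=0$ the precise polynomial structure using the classical order of vanishing of $|\xi|^3$ at the origin. Once $p$ is known to be a polynomial of degree at most $2$, the constant-Laplacian reduction in the first step is a direct application of Lemma~\ref{lem:constant}, and the integrability-based exclusions in the last step are routine.
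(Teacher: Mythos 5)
Your first and last steps are sound and, modulo a cosmetic difference ("small cone" should be a tube $\{t\omega+z: t>0, z\perp\omega, |z|\le 1\}$ in the degree-one case, since the quadratic part can overwhelm the linear term in a genuine cone), the concluding reductions from "polynomial of degree $\le 2$" to $\Delta p\le 0$, $\sup p<\infty$, $\deg p\ne 1$ are correct and mirror Lin's approach. The paper reaches these conclusions slightly differently — it invokes Lemma 11 of \cite{mar1} for $\sup p<\infty$ and uses Pizzetti's formula plus Jensen to show $\Delta p\le 0$ — but your finite-dimensional linear-algebra argument works once $p$ is known to be a quadratic polynomial. The real problem is the middle step.

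The Fourier-transform route to "$p$ is a polynomial of degree at most $2$" does not close, for three related reasons. First, you assert $u\in L_{1/2}(\R{3})$, but Definition \ref{deflapl} only requires $\Delta u\in L_{1/2}(\R{3})$; smoothness and $\int e^{3u}<\infty$ give no control on $u$ from below, so $u$ (equivalently $p$, since $v\in\mathcal{S}'$ by a Fubini argument) need not be tempered a priori — a smooth harmonic $h$ on $\R{3}$ can grow faster than any polynomial, and ruling this out is precisely the content of the step. Second, your claimed upper bound $v(x)\le C(1+\log|x|)$ requires controlling $\int_{B_1(x)}(-\log|x-y|)e^{3u(y)}\,dy$ uniformly in $x$, which needs $u$ bounded above; that is Lemma \ref{lem: upper bd}, which is proved using this very proposition, so the argument is circular. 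Third, and most seriously: in this paper $(-\Delta)^{3/2}p=0$ is meant in the sense of \eqref{deffrac}, i.e. $\int(-\Delta p)(-\Delta)^{1/2}\varphi\,dx=0$ for all Schwartz $\varphi$, which (via Lemma \ref{lem:constant}) is exactly the statement that $\Delta p$ is constant and nothing more — it is satisfied, for example, by $p(x)=x_1^3-3x_1x_2^2$. Reading it as the multiplier identity $|\xi|^3\hat p=0$ is a different and stronger statement; moreover $|\xi|^3$ is $C^2$ but not $C^3$ at the origin, so $|\xi|^3\partial^\alpha\delta_0$ is simply undefined (not "nonzero") for $|\alpha|\ge 3$, and "$|\xi|^3\hat p=0$" is meaningless unless you have already shown $\hat p$ has order $\le 2$. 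The input that actually bounds $\deg p$ is the combination of \eqref{area} with Lemma \ref{upper bound}, which gives $\int e^{3p(x)}|x|^{-3\alpha}\,dx<\infty$; this plus "$\Delta p$ constant" forces polynomial growth of the harmonic part of $p$ by a mean-value–Jensen argument (this is the role played by Theorem 6 of \cite{mar1} in the paper's proof). You use that integrability only in your last step, after having already assumed $p$ polynomial — the order must be reversed.
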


\begin{proof}
It follows from Lemma \ref{equal} that $(-\Delta)^\frac{3}{2} p=0$ in $\R{3}$. By Lemma \ref{lem:constant}, $\Delta p$ is a constant, and in particular $\Delta^2 p\equiv 0$.  Taking Lemma \ref{upper bound} and \eqref{area} into account it then follows from a generalization of Liouville's theorem (see e.g. Theorem 6 in \cite{mar1}) that $p$ is a polynomial of degree at most $2$. Since $u$ satisfies \eqref{area}, then $p$ can not be of degree $1$, particularly in view of Lemma \ref{upper bound}. The claim $\sup_{\R{3}} p<\infty$ follows from Lemma 11 in \cite{mar1}.

It remains to show that $\Delta p\le 0$ in $\R3$. We shall adapt some arguments from the proof of Lemma 2.2 in \cite{lin1}. By Pizzetti's formula (see e.g. (10) in \cite{mar1}) we have for any $x_0\in \R{3}$ and $r>0$,
\[
 \frac{r^2}{6}\Delta p(x_0)=\dashint_{\de B_r(x_0)}p d\sigma-p(x_0),
\]
where $\dashint$ denotes the average.
Hence by Jensen's inequality,
\[
 \begin{split}
  \exp\left(\frac{r^2}{2}\Delta p(x_0)\right)
&\leq e^{-3p(x_0)} \dashint_{\de B_r(x_0)}e^{3p}d\sigma\\
&\leq C e^{-3p(x_0)} r^{3C} \dashint_{\de B_r(x_0)}e^{3u}d\sigma\\
&\leq C e^{-3p(x_0)} r^{3C-2} \int_{\de B_r(x_0)}e^{3u}d\sigma,
 \end{split}
\]
where the estimate of $v$ is used in the second inequality and $C$ is a constant independent of $r$. Integrating with respect to $r$ it follows that
$$r^{2-3C}\exp\left(\frac{r^2}{2}\Delta p(x_0)\right)\in L^1([0,\infty)).$$
Hence, $\Delta p(x_0)\leq 0$.  
\end{proof}






A consequence of Proposition \ref{liou} is the following.
\begin{cor}\label{cor:lapu}
Let $u$ be a smooth solution of \eqref{eq0}-\eqref{area}. Then
\begin{equation}\label{eq: int}
-\Delta u(x)=\frac{1}{\pi^2}\int_{\R{3}} \frac{e^{3u(y)}}{|x-y|^2}\,dy+a,
\end{equation}
for some constant $a\ge 0$. 
\end{cor}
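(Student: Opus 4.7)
The plan is to combine the decomposition $u=v+p$ from Proposition \ref{liou} with the explicit computation of $-\Delta v$ already carried out (implicitly) in the proof of Lemma \ref{equal}.

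First, I would invoke Proposition \ref{liou} to write $u=v+p$, where $v$ is given by \eqref{eqv} and $p$ is a polynomial of degree $0$ or $2$ with $\Delta p\le 0$ in $\R{3}$. Since $p$ has degree at most $2$, its Laplacian is a constant; together with $\Delta p\le 0$ this constant has the form $-a$ for some $a\ge 0$. Thus $\Delta u=\Delta v-a$ pointwise in $\R{3}$.

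Next, I would compute $-\Delta v$ by differentiating under the integral in \eqref{eqv}. Using $\Delta_x \log|x-y|=|x-y|^{-2}$ in $\R{3}$ (for $x\ne y$) together with the fact that $e^{3u}\in L^1(\R{3})$ by \eqref{area}, one obtains
\begin{equation*}
-\Delta v(x)=\frac{1}{\pi^2}\int_{\R{3}}\frac{e^{3u(y)}}{|x-y|^2}\,dy,
\end{equation*}
which is exactly the computation performed in the proof of Lemma \ref{equal} (the factor is consistent since $f=2e^{3u}$ there cancels the $\tfrac{1}{2\pi^2}$ to give $\tfrac{1}{\pi^2}$). Substituting this into $-\Delta u=-\Delta v+a$ yields the identity \eqref{eq: int}.

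There is no real obstacle here: differentiation under the integral is justified by the local integrability of $|x-y|^{-2}$ in $\R{3}$ and the integrability of $e^{3u}$, while the sign $a\ge 0$ is the direct translation of the inequality $\Delta p\le 0$ established in Proposition \ref{liou}. The only mild point to check is the legitimacy of passing the Laplacian inside the integral uniformly on compact sets of $\R{3}$, which follows from splitting the integral over $B_1(x)$ and its complement and using the smoothness of $u$ together with the decay encoded in \eqref{area}.
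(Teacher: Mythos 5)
Your proof is correct and follows exactly the route the paper intends: the corollary is stated as ``a consequence of Proposition \ref{liou}'' with no further detail, and your argument fills that in by writing $u=v+p$, computing $-\Delta v$ by differentiating under the integral in \eqref{eqv} (as in Lemma \ref{equal}), and using $\Delta p\equiv$ const $\le 0$ from Proposition \ref{liou} to identify $a=-\Delta p\ge 0$. Your sign bookkeeping is also right (so you have, in passing, corrected the stray minus sign in the displayed formula for $-\Delta v$ in the proof of Lemma \ref{equal}).
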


\begin{lemma} \label{lem: upper bd}
Let $u$ be a smooth solution of \eqref{eq0}-\eqref{area}. Then $0\le-\Delta u(x)\leq A$ in $\R{3}$, where $A>0$ is a constant depending on $u$. Consequently, there exists a constant $B>0$ depending only on $A$ and $V$ such that $u\le B$ in $\R{3}$.
\end{lemma}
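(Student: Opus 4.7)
The lower bound $-\Delta u \ge 0$ is immediate from Corollary \ref{cor:lapu}: the Riesz integrand $e^{3u(y)}|x-y|^{-2}$ is nonnegative and $a \ge 0$. For the upper bound $-\Delta u \le A$, my plan is first to establish $M := \sup_{\R{3}}u < \infty$, and then to deduce the pointwise bound on $-\Delta u$ by a direct split. Indeed, from Corollary \ref{cor:lapu}, once $M$ is finite,
\[
-\Delta u(x) \le \tfrac{1}{\pi^2}\int_{|x-y|\ge 1} e^{3u}\,dy + \tfrac{1}{\pi^2}e^{3M}\int_{|z|<1}|z|^{-2}\,dz + a \le \tfrac{V}{\pi^2} + \tfrac{4 e^{3M}}{\pi} + a =: A.
\]
For the uniform bound on $u$ I would use the decomposition $u = v + p$ of Proposition \ref{liou} with $M_0 := \sup p < \infty$, reducing the task to bounding $v$ uniformly above. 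Since $v$ is continuous on $\R{3}$ (as the logarithmic potential of the smooth $L^1$ density $e^{3u}$), only the behavior at infinity is at issue, and I would pursue a strategy parallel to that of Lemma \ref{upper bound}: split $v(x) = \pi^{-2}\int \log(|y|/|x-y|) e^{3u(y)}\,dy$ into $\{|y|\le |x|/2\}$ (where the integrand is $\le 0$) and $\{|y|>|x|/2\}$, and on the latter estimate $\log(|y|/|x-y|) \le \log(1+|x|/|x-y|)$, further dividing into $\{|x-y|\ge 1\}$ and $\{|x-y|<1\}$ and using the smallness of $\int_{|y|>|x|/2} e^{3u}$ for large $|x|$.

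Given the upper bound $-\Delta u \le A$, the statement $u \le B(A,V)$ follows from a local Poisson decomposition. Fix $x_0 \in \R{3}$; on $B_1(x_0)$ let $h$ be the harmonic extension of $u|_{\partial B_1(x_0)}$ and set $w:=u-h$. Then $w$ solves $-\Delta w \in [0,A]$ with $w=0$ on $\partial B_1(x_0)$; comparing with the explicit supersolution $\phi(y) := \tfrac{A}{6}(1-|y-x_0|^2)$ via the maximum principle yields $0 \le w \le A/6$. In particular $u \ge h$, so $\int_{B_1(x_0)} e^{3h}\,dy \le V$; since $h$ is harmonic, $e^{3h}$ is subharmonic (because $\Delta e^{3h} = 9 e^{3h}|\nabla h|^2 \ge 0$), and the sub-mean-value inequality gives
\[
e^{3h(x_0)} \le \dashint_{B_1(x_0)} e^{3h}\,dy \le \tfrac{V}{|B_1|},
\]
so $h(x_0) \le \tfrac{1}{3}\log(V/|B_1|)$ and hence $u(x_0) \le \tfrac{1}{3}\log(V/|B_1|) + \tfrac{A}{6} =: B$.

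I expect the main obstacle to be the uniform upper bound on $v$ at infinity, more precisely controlling the local logarithmic integral $\int_{B_1(x)}(-\log|x-y|) e^{3u(y)}\,dy$ uniformly in $x$. Unlike in Lemma \ref{upper bound}, where the corresponding local piece is a fixed integral around the origin and hence a constant $C(u)$, here the integration region moves with $x$: the tail smallness $\int_{B_1(x)} e^{3u}\to 0$ does not by itself exclude high narrow peaks of $e^{3u}$ near $x$, and to conclude one needs finer information on the local concentration of $e^{3u}$, for instance via continuity of $v$ combined with some form of $\epsilon$-regularity for the equation $(-\Delta)^{3/2} u = 2e^{3u}$.
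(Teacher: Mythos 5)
Your derivation of $u \le B(A,V)$ from $0 \le -\Delta u \le A$ is correct and self-contained: the harmonic replacement $h$ on $B_1(x_0)$, the comparison with $\frac{A}{6}(1-|y-x_0|^2)$ giving $h \le u \le h + A/6$, and the subharmonicity of $e^{3h}$ combined with $\int_{B_1(x_0)}e^{3h}\le V$ indeed yield the bound. The paper instead quotes Lemma 3.2 of \cite{Xu} for exactly this step. Likewise, $-\Delta u \ge 0$ is immediate from Corollary \ref{cor:lapu}, and the passage from $\sup_{\R{3}} u \le M$ to $-\Delta u \le \frac{V}{\pi^2}+\frac{4e^{3M}}{\pi}+a$ is a correct split of the Riesz integral.

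The gap, which you correctly anticipate, is the claim $\sup_{\R{3}} u < \infty$; this is precisely the content the paper delegates to Lemma 3.1 of \cite{Xu}, with the small modification of inserting the polynomial $p$ from Proposition \ref{liou}. Your sketch for bounding $v$ from above does not close for two reasons. First, as you observe, tail smallness $\int_{B_1(x)} e^{3u}\to 0$ cannot by itself control the local piece $\int_{B_1(x)}\log\frac{1}{|x-y|}e^{3u(y)}\,dy$, since mass could concentrate arbitrarily close to $x$. What is needed is a Brezis--Merle type estimate: the logarithmic potential of $e^{3u}\chi_{B_2(x)}$, whose mass is at most $V/2$ everywhere and is small for $|x|$ large, lies in $L^q(B_1(x))$ for every $q$ with a uniform bound; combining this with $\sup p<\infty$ and the boundedness of the complementary part of $v$ on $B_1(x)$ gives $e^{3u}\in L^q(B_1(x))$ uniformly, after which the local logarithmic integral (or, more directly, the local Riesz integral in \eqref{eq: int}) is controlled by H\"older. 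Second, even with the local piece handled, the mid-range contribution $\int_{\{|y|>|x|/2,\ |x-y|\ge 1\}}\log\frac{|y|}{|x-y|}e^{3u}\,dy \le \log(1+|x|)\int_{|y|>|x|/2}e^{3u}\,dy$ is only $o(\log|x|)$ and not manifestly $O(1)$, so a crude split of $v$ gives at best $v(x)\le o(\log|x|)$ rather than $\sup v<\infty$. The route in \cite{lin1} and \cite{Xu} avoids both issues by establishing the uniform local $L^q$ bound on $e^{3u}$ first and reading off $-\Delta u \le A$ directly from \eqref{eq: int}, then concluding $u\le B$ exactly as in your final paragraph.
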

\begin{proof}
It follows from Corollary \ref{cor:lapu} that $u$ satisfies \eqref{eq: int}. Then the conclusion follows from Lemmas 3.1 and 3.2 in \cite{Xu}. Note that although the statement of Lemma 3.1 in \cite{Xu} is for solutions of \eqref{eq: int} with $a=0$, its proof still works for solutions of \eqref{eq: int} with the following mild changes. The function $q(x)$ defined after (3.8) of \cite{Xu} is replaced by
$$q(x)=w(x)-h(x) -p(x),$$
where $p(x)$ is the polynomial of degree $2$ defined in Proposition \ref{liou}.
The bound  (3.9) of \cite{Xu}, now is replaced by
$$0\le  -\Delta q \le V+a.$$
The bound $w(y)=q(y)+h(y)\le C+h(y)$ on page 10 of \cite{Xu} is replaced by
$$w(y)=q(y)+h(y) +p(x)\le C+h(y),$$
where we use that $\sup_{\R{3}}p(x)\le C$.
\end{proof}

\begin{lemma} \label{lower bound}
Let $u$ be a smooth solution of \eqref{eq0}-\eqref{area} and $v$ be as in \eqref{eqv}. Then for any $\varepsilon>0$ there exists $R>0$ such that for all $|x|\ge R$,
\[
 -v(x)\geq (\alpha -\varepsilon)\log|x|.
\]
Moreover, \eqref{limDeltav} holds. 
\end{lemma}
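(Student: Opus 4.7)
The plan is to split the integral
\[
-\pi^2 v(x) = \int_{\R{3}} \log\frac{|x-y|}{|y|}\, e^{3u(y)}\, dy
\]
according to the relative sizes of $|y|$, $|x|$, and $|x-y|$. The main contribution will come from the region $|y|\le R$ and will produce the $\alpha\log|x|$ term; the remaining regions will contribute either bounded or $o(\log|x|)$ amounts. Throughout, the $L^\infty$ bound $e^{3u}\le e^{3B}$ from Lemma \ref{lem: upper bd} and the integrability $\int_{\R{3}} e^{3u}<\infty$ are the key inputs.

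Fix $\varepsilon>0$, and choose $R$ large so that $\alpha_R:=\pi^{-2}\int_{|y|\le R}e^{3u}\,dy>\alpha-\varepsilon/4$. For $|x|\ge 2R$, I would split $\R{3}=B_R(0)\cup\{R<|y|\le 2|x|\}\cup\{|y|>2|x|\}$. On $B_R(0)$, $|x-y|\ge |x|/2$ gives $\log(|x-y|/|y|)\ge \log|x|-\log 2-\log|y|$; integrating, and using that $\log|y|$ is integrable on $B_R(0)$ against the bounded density $e^{3u}$, yields a contribution $\ge (\log|x|)\pi^2\alpha_R-C(R)$. On $\{|y|>2|x|\}$, $|x-y|\ge |y|/2$ gives $\log(|x-y|/|y|)\ge -\log 2$, so the contribution is bounded. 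On the annulus $\{R<|y|\le 2|x|\}$, I would further split according to whether $|x-y|\ge |x|/\log|x|$ or not: in the first case, $\log(|x-y|/|y|)\ge -\log(2\log|x|)$ and the contribution is $\ge -(\log\log|x|+\log 2)\pi^2\alpha=o(\log|x|)$; in the second case, the region lies in $B_{|x|/\log|x|}(x)\subset\{|y|>|x|/2\}$, so $\int e^{3u}$ over it is $o(1)$ as $|x|\to\infty$, while the singularity in $\log|x-y|$ is absorbed by combining $e^{3u}\le e^{3B}$ with $\int_{|z|<1}|\log|z||\,dz<\infty$, giving a contribution $\ge -C-o(\log|x|)$. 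Summing the four pieces, $-v(x)\ge \alpha_R\log|x|-O(\log\log|x|)\ge(\alpha-\varepsilon)\log|x|$ for $|x|$ large.

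For the decay in \eqref{limDeltav}, I would differentiate $v$ under the integral (legitimate since $e^{3u}\in L^1\cap L^\infty$ and the differentiated kernels $|x-y|^{-1}$, $|x-y|^{-2}$ lie in $L^1_{\loc}(\R{3})$). Using $\Delta_x\log|x-y|=|x-y|^{-2}$ in $\R{3}$, this gives $\Delta v(x)=-\pi^{-2}\int |x-y|^{-2}e^{3u(y)}\,dy$ and $|\nabla v(x)|\le\pi^{-2}\int|x-y|^{-1}e^{3u(y)}\,dy$. I would split each integral into $\{|y|\le R,\,|x-y|\ge 1\}$, $\{|y|>R,\,|x-y|\ge 1\}$, and $B_1(x)$. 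The first piece is $\le C(|x|-R)^{-k}\pi^2\alpha\to 0$ as $|x|\to\infty$ (with $k=1,2$); the second is $\le\int_{|y|>R}e^{3u}\to 0$ as $R\to\infty$; and for the third, H\"older's inequality provides a bound $C_p\|e^{3u}\|_{L^p(B_1(x))}$ for any $p>3/2$ (resp. $p>3$), and this vanishes as $|x|\to\infty$ since $B_1(x)\subset\{|y|>|x|-1\}$ eventually while $e^{3u}\in L^1\cap L^\infty$.

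The main obstacle in both arguments is controlling the regions where $y$ is close to $x$, since the kernels $\log|x-y|$ and $|x-y|^{-k}$ are singular there. In each instance I would combine the pointwise $L^\infty$ bound $e^{3u}\le e^{3B}$ from Lemma \ref{lem: upper bd} with the local integrability of the singular kernel and with the smallness of $\int_{|y|>\rho}e^{3u}$ for $\rho$ large; the cutoff $|x-y|\sim |x|/\log|x|$ in the lower-bound argument is chosen precisely so that both sub-pieces of the annulus contribute only $o(\log|x|)$.
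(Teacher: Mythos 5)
Your proposal is correct and follows essentially the same strategy as the paper: a region splitting of the logarithmic kernel for the lower bound on $-v$ (the paper outsources this step to Lemma 2.4 of Lin \cite{lin1}, which you reproduce directly), and a decomposition of $\R{3}$ into $B_1(x)$ and its complement, handled via H\"older and dominated convergence, for the decay of $\nabla^\ell v$. The one minor difference is in bounding $\int_{B_1(x)}|x-y|^{-\ell}e^{3u(y)}\,dy$: the paper applies H\"older to produce $\bigl(\int_{B_1(x)}e^{12u}\bigr)^{1/4}=\bigl(\int_{B_1(x)}e^{12v+12p}\bigr)^{1/4}$ and then invokes the just-proven estimate $v\le(-\alpha+\varepsilon)\log|x|$ together with $\sup p<\infty$, whereas you use only $e^{3u}\le e^{3B}$ and $\int_{B_1(x)}e^{3u}\to 0$; both are valid, and yours is slightly more self-contained since it decouples the gradient-decay conclusion from the first half of the lemma.
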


\begin{proof}
 As in the proof of Lemma 2.4 of \cite{lin1}, we can show that for any $\ve>0$
there exists $R=R(\ve)>0$ such that
\[
 -v(x)\geq (\alpha -\frac{\ve}{2})\log|x|+\frac{1}{\pi^2}\int_{B_1(x)}\log|x-y| e^{3u(y)}\,\ud y,
\]
where $B_1(x)$ denotes the ball with center $x$ and radius $1$. 
By Lemma \ref{lem: upper bd}, the last term is bounded from below independently of $x$, which implies that $ -v(x)\geq (\alpha -\varepsilon)\log|x|$ for large $|x|$. 

Meanwhile, for $\ell =1,2$
\[
\begin{split}
|\nabla^\ell v(x)|&\le C\int_{\R{3}} \frac{e^{3u(y)}}{|x-y|^\ell}\,d y\\
&=C\int_{B_1(x)} \frac{e^{3u(y)}}{|x-y|^\ell}\,d y+C\int_{\R{3}\setminus B_1(x)} \frac{e^{3u(y)}}{|x-y|^\ell}\,d y
\end{split}
\]
Then we bound
\[
\begin{split}
\int_{B_1(x)} \frac{e^{3u(y)}}{|x-y|}\,d y&\le \int_{B_1(x)} \frac{e^{3u(y)}}{|x-y|^2}\, d y\\
&\le \left(\int_{B_1(x)} \frac{1}{|x-y|^\frac{8}{3}}\,d y\right)^{3/4}\left(\int_{B_1(x)} e^{12u(y)}\,d y\right)^{1/4}\\
&\leq C\left(\int_{B_1(x)} e^{12v(y)+12p(y)}\,d y\right)^{1/4}\to 0 \quad \mbox{as }|x|\to \infty,
\end{split}
\]
 since $ v(x)\leq (-\alpha +\varepsilon)\log|x|$ and $p(x)$ is bounded from above by Proposition \ref{liou}. On the other hand, by the dominated convergence theorem,
\[
\int_{\R{3}\setminus B_1(x)} \frac{e^{3u(y)}}{|x-y|^\ell}\,d y\to 0 \quad \mbox{as }|x|\to \infty,\quad \ell=1,2,
\] 
and \eqref{limDeltav} follows.
\end{proof}




In the proof of Theorem \ref{trmvolume} we shall also use the following Pohozaev-type identity, whose proof can be found in \cite[Theorem 2.1]{Xu}.

\begin{lemma}[\cite{Xu}]\label{lemmaxu} Let $w\in C^{1}(\R{3})$ solve the integral equation
\begin{equation}\label{eqxu}
w(x)=\frac{1}{2\pi^2}\int_{\R{3}}\log\left(\frac{|y|}{|x-y|}\right) K(y)e^{3w(y)}dy,
\end{equation}
where $K\in C^1(\R{3})$ and $Ke^{3w}\in L^1(\R{3})$.
Then, setting
$$\alpha:=\frac{1}{2\pi^2}\int_{\R{3}}Ke^{3w}dx,$$
we have
\begin{equation}\label{eqpoho}
\alpha(\alpha-2)=\frac{1}{3\pi^2}\int_{\R{3}}x\cdot\nabla K(x)e^{3w(x)}dx.
\end{equation}
\end{lemma}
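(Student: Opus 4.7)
My plan is to derive \eqref{eqpoho} by the standard Pohozaev-style maneuver: multiply the integral equation by $x\cdot\nabla w$ against the measure $Ke^{3w}\,dx$ and compute the resulting quantity two ways, one giving $\alpha^2$ via a symmetrization trick and the other giving $\alpha$ plus the desired $x\cdot\nabla K$ term via integration by parts.

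First I would differentiate \eqref{eqxu} under the integral sign to obtain
$$\nabla w(x)=-\frac{1}{2\pi^2}\int_{\R{3}}\frac{x-y}{|x-y|^2}K(y)e^{3w(y)}\,dy,$$
then form the integral
$$I:=\int_{\R{3}} x\cdot\nabla w(x)\,K(x)e^{3w(x)}\,dx=-\frac{1}{2\pi^2}\iint_{\R{3}\times\R{3}}\frac{x\cdot(x-y)}{|x-y|^2}K(x)e^{3w(x)}K(y)e^{3w(y)}\,dx\,dy.$$
Using the symmetry of the product measure under $(x,y)\leftrightarrow(y,x)$ and averaging with the swapped version $-y\cdot(x-y)/|x-y|^2$, the kernel becomes $\tfrac12(x-y)\cdot(x-y)/|x-y|^2=\tfrac12$, so
$$I=-\frac{1}{4\pi^2}\left(\int_{\R{3}}Ke^{3w}\,dx\right)^{\!2}=-\pi^2\alpha^2.$$

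Next I would compute $I$ a second way through integration by parts. Since $x\cdot\nabla(e^{3w})=3e^{3w}\,x\cdot\nabla w$,
$$I=\tfrac13\int_{\R{3}}K\,x\cdot\nabla(e^{3w})\,dx=-\tfrac13\int_{\R{3}}\operatorname{div}(xK)e^{3w}\,dx=-\int_{\R{3}}Ke^{3w}\,dx-\tfrac13\int_{\R{3}}x\cdot\nabla K\,e^{3w}\,dx,$$
which equals $-2\pi^2\alpha-\tfrac13\int x\cdot\nabla K\,e^{3w}\,dx$. Equating the two expressions for $I$ and dividing by $\pi^2$ produces $\alpha(\alpha-2)=\frac{1}{3\pi^2}\int x\cdot\nabla K\,e^{3w}\,dx$, which is exactly \eqref{eqpoho}.

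The main obstacle is rigorously justifying the two computations under the very mild hypothesis $Ke^{3w}\in L^1$: the symmetrization requires Fubini, hence absolute integrability of the double integral with the kernel $x\cdot(x-y)/|x-y|^2$, and the integration by parts requires that the boundary contributions at infinity from $\operatorname{div}(xKe^{3w})$ actually vanish. The standard workaround, which I would follow, is to introduce a smooth radial cutoff $\eta_R$ supported in $B_{2R}$ and equal to $1$ on $B_R$, carry out both computations with $\eta_R(x)\eta_R(y)$ (respectively $\eta_R(x)$) inserted, and then let $R\to\infty$; the kernel estimate $|x\cdot(x-y)|/|x-y|^2\le |x-y|^{-1}(|x|+|y|)/2+\mathrm{bounded}$ combined with $Ke^{3w}\in L^1$ and the $L^\infty$ bound on $K$ lets one control the cross and boundary terms, while the identity $\nabla w\in L^\infty$ (inherited from the smoothness of $w$) lets one pass to the limit in $I$. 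For the details of this cutoff argument, one can simply refer to \cite[Theorem 2.1]{Xu}.
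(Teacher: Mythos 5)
The paper does not give its own argument for this lemma but simply refers to \cite[Theorem 2.1]{Xu}; your outline is exactly the Pohozaev computation underlying that reference, and the arithmetic is correct: symmetrizing the double integral for $I=\int_{\R{3}} x\cdot\nabla w\,Ke^{3w}\,dx$ gives $-\pi^2\alpha^2$, while rewriting $I=\tfrac13\int_{\R{3}} K\,x\cdot\nabla(e^{3w})\,dx$ and integrating by parts (with $\operatorname{div}(xK)=3K+x\cdot\nabla K$ in $\R{3}$) gives $-2\pi^2\alpha-\tfrac13\int_{\R{3}} x\cdot\nabla K\,e^{3w}\,dx$, and equating yields \eqref{eqpoho}. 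One caveat: in your closing paragraph you invoke $K\in L^\infty(\R{3})$ and $\nabla w\in L^\infty(\R{3})$ to control the cutoff errors, but neither is among the stated hypotheses --- $K\in C^1$, $w\in C^1$ and $Ke^{3w}\in L^1$ do not force them --- so the rigorous justification of Fubini and of the vanishing boundary contributions really requires the finer growth/decay estimates on $w$ that one extracts from \eqref{eqxu} itself; since you, like the paper, ultimately defer these technicalities to Xu's proof, the sketch is acceptable as a whole.
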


\section{Proof of Theorems \ref{clas1}, \ref{clas2} and \ref{trmvolume}}

\begin{proof}[Proof of Theorem \ref{clas1}]
It follows from Lemma \ref{upper bound}, Proposition \ref{liou}, and Lemma \ref{lower bound}.
\end{proof}

\begin{proof}[Proof of Theorem \ref{clas2}]
Clearly \emph{(i)} implies \emph{(ii)}-\emph{(vi)}. In view of \eqref{repr}, \eqref{limDeltav} and \eqref{limv} it is also easy to see that \emph{(ii)}, \emph{(iii)} and \emph{(iv)} are equivalent. 
Now if \emph{(ii)} holds, then $u=v+C$, i.e. $u$ solves an integral equation and Theorem 4.1 in \cite{Xu} implies that $u$ is spherical.

To prove that either \emph{(v)} or \emph{(vi)} imply \emph{(i)} we assume that \emph{(i)} does not hold. Then \emph{(ii)} does not hold and $\deg p=2$. Hence $|\nabla p|^2$ is a polynomial of degree $2$.
Then
$$
R_{g_u}=-2e^{-2u}(2\Delta u+ |\nabla u|^2)= -2e^{-2(v+p)}(2\Delta v + 2\Delta p +|\nabla p|^2 +2 \nabla p\cdot\nabla v +|\nabla v|^2).
$$
It follows from \eqref{limDeltav} at once that $\liminf_{|x|\to \infty} R_{g_u}=-\infty$, so \emph{(vi)} does not hold.
As for \emph{(v)}, if \emph{(i)} fails to hold, then $\deg p =2$ and from \eqref{limv} we infer
$$\liminf_{|x|\to\infty}\frac{u(x)}{|x|^2} < 0.$$
This implies that $\pi^{*}(e^{2u}|dx|^2)$ is either discontinuous or vanishes at the point $(0,0,0,-1)\in \mathbb{S}^3$, and therefore \emph{(vi)} also fails to hold (see \cite{mar1} for more details).

Finally, assuming that $u$ is non-spherical one has that \emph{(ii)} does not hold. So $\Delta p =const \ne 0$ (the case $\Delta p\equiv 0$, together with $\sup_{\R{3}}p<\infty$ would yield $p\equiv const$ by Liouville's theorem), and \eqref{deltaa} follows at once from $\Delta u=\Delta v+\Delta p$ and \eqref{limDeltav}.
\end{proof}

\begin{proof}[Proof of Theorem \ref{trmvolume}] The function $v(x)$ satisfies the integral equation \eqref{eqv}, which can be written as 
\[
v(x)=\frac{1}{2\pi^2}\int_{\R{3}}\log\left(\frac{|y|}{|x-y|}\right) K(y)e^{3v(y)}dy,
\]
where
$$K(x)=2e^{3(u-v)}=2e^{3p(x)}$$
and $p$ is the polynomial given by Theorem \ref{clas1}. Since $u$ is non-spherical, we have that $p$ is not a constant and, up to a translation
$$p(x)=-\sum_{i=1}^3 a_i x_i^2 +c_0$$
for some coefficients $a_i\ge 0$ not all vanishing. In particular
$$x\cdot \nabla p(x)\le 0,\quad x\cdot \nabla p(x)\not\equiv 0.$$
This of course implies
$$x\cdot \nabla K(x)\le 0,\quad x\cdot \nabla K(x)\not\equiv 0.$$
It follows from \eqref{eqpoho} that $\alpha<2$, i.e.
$$2>\frac{1}{|\mathbb{S}^3|}\int_{\R{3}}2e^{3p}e^{3v}dx=\frac{2}{|\mathbb{S}^3|}\int_{R^3}e^{3u}dx=\frac{2V}{|\mathbb{S}^3|}.$$
\end{proof}

\appendix
\section{The fractional Laplacian in $\R{n}$}\label{s:frac}

If $\sigma\in (0,1)$ and $u$ belongs to the Schwarz space $\mathcal S$ of rapidly decreasing smooth functions in $\R{n}$, then $(-\Delta)^\sigma u$ is defined by
$$\widehat{(-\Delta)^\sigma u}(\xi)=|\xi|^{2\sigma}\hat u(\xi),$$
where
$$\hat{f}(\xi)=\M{F}(f)(\xi):=\frac{1}{(2\pi)^{n/2}}\int_{\R{n}}f(x)e^{- ix\cdot \xi}dx$$
denotes the Fourier transform.
An equivalent definition is the following:
\begin{equation}\label{fraclapl}
(-\Delta)^\sigma u(x):=C_{n,\sigma} P.V.\int_{\R{n}}\frac{u(x)-u(y)}{|x-y|^{n+2\sigma}}dy,
\end{equation}
where the right-hand side is defined in the sense of the principal value. One can see that \eqref{fraclapl} makes sense for classes of functions larger than the Schwarz space, for instance for functions in $C^{2\sigma+\alpha}_{\loc}(\R{n})\cap L_{\sigma}(\R{n})$ for some $\alpha>0$, where
$$L_{\sigma}(\R{n}):=\left\{u\in L^1_{\loc}(\R{n}):\int_{\R{n}}\frac{|u(x)|}{1+|x|^{n+2\sigma}}dx<\infty   \right\},$$
and $C^{2\sigma+\alpha}_{\loc}(\R{n}):=C^{0,2\sigma+\alpha}_{\loc}(\R{n})$ for $2\sigma+\alpha\le1$ and  $C^{2\sigma+\alpha}_{\loc}(\R{n}):=C^{1,2\sigma-1+\alpha}_{\loc}(\R{n})$ for $2\sigma+\alpha>1$. We denote $\|u\|_{L_{\sigma}(\R{n})}=\int_{\R{n}}\frac{|u(x)|}{1+|x|^{n+2\sigma}}dx$. 
Observing that
\begin{equation}\label{stimaphi}
\sup_{\R{n}}(1+|x|^{n+2\sigma})|(-\Delta)^\sigma \varphi(x)|< +\infty\quad \text{for }\varphi\in \mathcal{S},
\end{equation}
and that $(-\Delta)^{\sigma}:\mathcal{S}\to\mathcal{S}$ is symmetric, 
as shown in \cite{Sil}, one can define $(-\Delta)^{\sigma} u$ by duality for functions $u\in L_{\sigma}(\R{n})$ as a tempered distribution via the relation
\begin{equation}\label{lapldual}
\langle (-\Delta)^{\sigma} u,\varphi\rangle =\int_{\R{n}} u(x) (-\Delta)^{\sigma} \varphi(x)dx\quad\text{for every }\varphi\in\mathcal{S}.
\end{equation}
That for $u\in C^{2\sigma+\alpha}_{\loc}(\R{n})\cap L_\sigma (\R{n})$ the definitions \eqref{fraclapl} and \eqref{lapldual} coincide is shown in \cite[Proposition 2.4]{Sil}.

The following lemma is well-known, but we include a proof here for convenience and completeness.
\begin{lemma}\label{lemmafund} The function $K(x):=\frac{1}{2\pi^2|x|^2}$ is a fundamental solution of $(-\Delta)^\frac{1}{2}$ in $\R{3}$ in the sense that for every $f\in L^1(\R{3})$ we have $K*f\in L_{1/2}(\R{3})$ and
\begin{equation}\label{K*f}
(-\Delta)^\frac{1}{2} (K*f)=f,
\end{equation}
in the sense of \eqref{lapldual}.
\end{lemma}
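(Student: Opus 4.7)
The plan is to reduce the statement to the classical Fourier identity expressing $K$ as the Riesz potential kernel $I_{1}$ in $\R{3}$, and then to extend the identity from Schwarz test functions to $f\in L^{1}(\R{3})$ by Fubini.

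First, to prove $K*f\in L_{1/2}(\R{3})$, the key is the uniform estimate
\begin{equation*}
I(y):=\int_{\R{3}}\frac{dx}{|x-y|^{2}(1+|x|^{4})} \le C, \quad \text{for every } y\in\R{3}.
\end{equation*}
I would prove this by splitting $\R{3}$ into the three regions (a) $|x-y|\le 1$ (where $|x-y|^{-2}$ is integrable over a ball), (b) $|x-y|\ge 1$ and $|x|\le 2|y|$ (where $|x-y|^{-2}\le 1$ and $(1+|x|^{4})^{-1}$ is integrable on $\R{3}$), and (c) $|x|\ge 2|y|$ (so $|x-y|\ge|x|/2$ and the integrand decays like $|x|^{-6}$ at infinity). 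Tonelli's theorem then gives $\int_{\R{3}}|K*f|(1+|x|^{4})^{-1}\,dx\le C\|f\|_{L^{1}}$.

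Second, I would verify the pointwise identity
\begin{equation*}
K*\bigl((-\Delta)^{1/2}\varphi\bigr)=\varphi \quad\text{for every }\varphi\in\M{S}(\R{3}).
\end{equation*}
By \eqref{stimaphi}, $|(-\Delta)^{1/2}\varphi(x)|\le C(1+|x|^{4})^{-1}$, so together with $K(x)=O(|x|^{-2})$ the convolution is absolutely convergent. Passing to Fourier transforms with the appendix convention, the classical formula $\M{F}(|x|^{-2})(\xi)=\sqrt{\pi/2}\,|\xi|^{-1}$ in $\R{3}$ yields $\hat K(\xi)=(2\pi)^{-3/2}|\xi|^{-1}$; the constant $1/(2\pi^{2})$ in the definition of $K$ is chosen precisely so that this normalization holds. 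Combined with $\widehat{(-\Delta)^{1/2}\varphi}(\xi)=|\xi|\hat\varphi(\xi)$ and the convolution rule $\widehat{K*\psi}=(2\pi)^{3/2}\hat K\hat\psi$, this gives $\widehat{K*(-\Delta)^{1/2}\varphi}=\hat\varphi$, whence the claim.

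Finally, for $\varphi\in\M{S}(\R{3})$, the bound \eqref{stimaphi} and the uniform estimate on $I(y)$ show that $K(x-y)|f(y)||(-\Delta)^{1/2}\varphi(x)|$ is integrable on $\R{3}\times\R{3}$. Fubini then yields
\begin{equation*}
\int_{\R{3}}(K*f)(x)(-\Delta)^{1/2}\varphi(x)\,dx = \int_{\R{3}}f(y)\bigl(K*(-\Delta)^{1/2}\varphi\bigr)(y)\,dy = \int_{\R{3}}f\varphi\,dy = \langle f,\varphi\rangle,
\end{equation*}
which is precisely \eqref{K*f} in the sense of \eqref{lapldual}. The main obstacle is the uniform bound on $I(y)$: it drives both the $L_{1/2}$ membership and the applicability of Fubini. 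The Fourier identity itself is classical, and once the joint integrability is established the distributional identity follows in one step.
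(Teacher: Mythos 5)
Your proof is correct, and it takes a genuinely different route from the paper's. The paper (a) cites Lieb--Loss, Theorem 5.9, to get $(-\Delta)^{1/2}(K*f)=f$ when $f\in C_c^\infty(\R3)$, (b) proves $K*f\in L_{1/2}(\R3)$ by splitting $K=K\chi_{B_1}+K\chi_{B_1^c}\in L^{3/2-\varepsilon}+L^{3/2+\varepsilon}$ and applying Young's and H\"older's inequalities, and (c) handles general $f\in L^1$ by approximating with $f_k\in C_c^\infty$, $f_k\to f$ in $L^1$, and passing to the limit in the duality pairing. You instead (a) establish the dual identity $K*(-\Delta)^{1/2}\varphi=\varphi$ for $\varphi\in\mathcal S$ by a direct Fourier computation (the constants check out: $\hat K=(2\pi)^{-3/2}|\xi|^{-1}$ with the paper's normalization, giving $\widehat{K*(-\Delta)^{1/2}\varphi}=\hat\varphi$), and (b) prove the uniform pointwise bound $I(y)=\int_{\R3}|x-y|^{-2}(1+|x|^4)^{-1}\,dx\le C$, which feeds both the $L_{1/2}$ membership of $K*f$ \emph{and} the Tonelli justification for the Fubini swap that transfers the identity to arbitrary $f\in L^1$. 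The advantage of your version is that it avoids the density/limiting argument entirely: once the uniform kernel bound is in hand, the distributional identity for general $f$ drops out in one Fubini step, and the same bound drives both parts of the statement. The paper's version is shorter to write because it leans on Lieb--Loss as a black box for the Schwartz-level identity, but structurally your approach is cleaner. One small point of care: when you invoke the convolution theorem for $K*(-\Delta)^{1/2}\varphi$, neither factor is in $L^1\cap L^2$, so the identity $\widehat{f*g}=(2\pi)^{n/2}\hat f\hat g$ is being used in the tempered-distribution sense (with $K\in\mathcal S'$ locally integrable and $(-\Delta)^{1/2}\varphi\in L^1$); this is standard but worth a sentence of justification, and it is exactly the content of the Lieb--Loss theorem the paper cites.
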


\begin{proof} First of all, it follows easily from Theorem 5.9 in \cite{LL} that \eqref{K*f} holds if we assume $f\in \mathcal C^\infty_c(\R{3})$. 

Secondly, we notice that, if $f\in L^{1}$ then $K*f\in L_{1/2}(\R{3})$. Indeed,
$$K(x)=\frac{1}{2\pi^2|x|^{2}}\chi_{B_{1}} + \frac{1}{2\pi^2|x|^{2}}\chi_{\mathbb{R}^{3}\setminus B_{1}}=: K_1(x)+K_2(x),$$
and $K_{1}\in L^{\frac{3}{2}-\varepsilon}(\mathbb{R}^{3})$,  $K_{2}\in L^{\frac{3}{2}+\varepsilon}(\mathbb{R}^{3})$ for any $\varepsilon>0$. Hence, by Young's inequality
$$K*f\in L^{\frac{3}{2}-\varepsilon}(\mathbb{R}^{3})+L^{\frac{3}{2}+\varepsilon}(\mathbb{R}^{3})\subset L_{1/2}(\R{3}),$$
where the last inclusion follows from H\"older's inequality.

Lastly, if $f\in L^1(\R{3})$ we take a sequence $(f_k)\subset C^\infty_c(\R{3})$ with $f_k\to f$ in $L^1(\R{3})$. Then for every $\varphi\in\mathcal S$ we have
$$(I)_k:=\langle (-\Delta)^\frac{1}{2}(K*f_k),\varphi\rangle =\langle f_k,\varphi\rangle \to \langle f,\varphi\rangle\quad \text{as }k\to\infty.$$
Since $K*f=K_{1}*f+K_{2}*f\in L^{\frac{3}{2}-\varepsilon}+L^{\frac{3}{2}+\varepsilon}$, we have $K_{1}*f_{k}\to K_{1}*f$ in $L^{\frac{3}{2}-\varepsilon}$, and thus, in $L_{1/2}(\R{3})$ by H\"older's inequality. Similarly, $K_{2}*f_{k}\to K_{2}*f$ in $L_{1/2}(\R{3})$, and  $K*f_k\to K*f$ in $L_{1/2}(\R{3})$. By \eqref{stimaphi}, we find
$$(I)_k=\langle K*f_k,(-\Delta)^\frac{1}{2}\varphi\rangle \to \langle K*f,(-\Delta)^\frac{1}{2}\varphi\rangle.$$
Hence, we conclude that $(-\Delta)^\frac{1}{2} (K*f)= f$ in the sense of \eqref{lapldual}.
\end{proof}

\subsection{Schauder estimates}

The following proposition should be well-known, but we include here an elementary proof of the estimate \eqref{schaud2} which was used in Section \ref{s:estimates}.

Let $\Omega$ be a domain in $\R{n}$ and $f\in L^1(\Omega)$. We say that $u\in L_{\sigma}(\R{n})$ is a solution of $(-\Delta)^\sigma u=f$ in $\Omega$ if
\[
\int_{\R{n}}  u\,  (-\Delta)^\sigma \varphi  dx=\int_{\R{n}}  f\,   \varphi  dx\quad \text{for every }\varphi\in C_c^{\infty}(\Omega).
\]

\begin{prop}\label{propschauder} If $u\in L_{\sigma}(\R{n})$ for some $\sigma\in (0,1)$ and $(-\Delta)^\sigma u=0$ in $B_{2r}$ for some $r>0$, then $u\in C^\infty(B_{2r})$. Moreover, for every $k\in \mathbb{N}$ the following estimate holds:
\begin{equation}\label{schaud2}
\|\nabla^k u\|_{L^\infty(B_r)}\le \frac{C_{n,\sigma,k}}{r^{k}} \left( r^{2\sigma}\int_{\R{n}\setminus B_{2r}}\frac{|u(x)|}{|x|^{n+2\sigma}}dx + \frac{\|u\|_{L^1(B_{2r})}}{r^n}\right),
\end{equation}
where $C_{n,\sigma,k}$ is a positive constant depending only on $n,\sigma$ and $k$.
\end{prop}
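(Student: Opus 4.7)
The approach is to first reduce to radius $r=1$ via the rescaling $v(x)=u(rx)$, which turns $(-\Delta)^\sigma u=0$ in $B_{2r}$ into $(-\Delta)^\sigma v=0$ in $B_2$ and converts \eqref{schaud2} (with its natural scaling of each integral under dilations) into the model estimate
\[
\|\nabla^k v\|_{L^\infty(B_1)} \le C_k\left(\|v\|_{L^1(B_2)} + \int_{\R{n}\setminus B_2}\frac{|v(y)|}{|y|^{n+2\sigma}}dy\right).
\]
So I may assume throughout that $r=1$.

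To obtain $u \in C^\infty(B_{2})$, I would pick a cutoff $\phi \in C_c^\infty(B_{7/4})$ with $\phi \equiv 1$ on $B_{3/2}$ and split $u=\phi u+(1-\phi)u$. Since $(1-\phi)u \equiv 0$ on $B_{3/2}$, its fractional Laplacian at $x \in B_1$ reduces to $-c_{n,\sigma}\int_{|y|>3/2}(1-\phi(y))u(y)|x-y|^{-n-2\sigma}dy$, which is $C^\infty$ in $x$ because the kernel is smooth and bounded in that region. Hence $(-\Delta)^\sigma(\phi u)$ is smooth on $B_1$, and interior regularity for the fractional Laplacian applied to the compactly supported $L^1$ function $\phi u$ (via the Riesz potential representation, since $\phi u = I_{2\sigma} * (-\Delta)^\sigma(\phi u)$ up to at worst a harmonic additive) yields $u \in C^\infty(B_1)$; iterating on concentric balls covers $B_2$.

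The quantitative bound would come from the Poisson representation for $\sigma$-harmonic functions on a ball: for $R \in (1,2)$ and $x \in B_R$,
\[
u(x) = c_{n,\sigma}\int_{\R{n}\setminus B_R}\left(\frac{R^2-|x|^2}{|y|^2-R^2}\right)^\sigma \frac{u(y)}{|x-y|^n}dy,
\]
which follows because both sides are $\sigma$-harmonic in $B_R$, agree outside $B_R$, and the maximum principle (available once smoothness is in hand) ensures uniqueness. The main technical difficulty is the boundary singularity $(|y|^2-R^2)^{-\sigma}$ at $|y|\to R^+$, which obstructs estimating the kernel directly against $\|u\|_{L^1}$. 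To neutralize it I would average the identity over $R \in (R_1,R_2):=(5/4,3/2)$ and swap the order of integration (Fubini), obtaining
\[
u(x) = \frac{c_{n,\sigma}}{R_2-R_1}\int_{|y|>R_1}\frac{u(y)}{|x-y|^n}G(|x|,|y|)dy,\quad G(|x|,|y|) := \int_{R_1}^{\min(|y|,R_2)}\left(\frac{R^2-|x|^2}{|y|^2-R^2}\right)^\sigma dR.
\]
A change of variable $s=R^2$ shows that $G$ is uniformly bounded for $x \in B_1$, $|y|>R_1$, and satisfies $G(|x|,|y|) \le C |y|^{-2\sigma}$ for large $|y|$, since $\sigma<1$ makes $(|y|^2-R^2)^{-\sigma}$ integrable in $R$.

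Finally, for $x \in B_1$ and $|y|\ge R_1$ one has $|x-y|\ge 1/4$, so $|x-y|^{-n}$ and $(R^2-|x|^2)^\sigma$ have $x$-derivatives of any order bounded uniformly in $R \in (R_1,R_2)$ and $|x|\le 1$. Differentiating the averaged representation $k$ times under the integral and splitting the $y$-integration into $|y|\in(R_1,2)$ (bounded by $\|u\|_{L^1(B_2)}$) and $|y|>2$ (bounded by the weighted tail $\int_{|y|>2}|u(y)||y|^{-n-2\sigma}dy$) yields the desired estimate. The critical step to get right is the averaging: one must verify that integrating over $R$ truly smooths out the Poisson-kernel singularity uniformly in $x \in B_1$, which reduces to the explicit bound on the elementary integral $G$ described above.
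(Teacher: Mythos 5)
Your argument is correct in substance but takes a genuinely different route from the paper's. The paper works directly with Silvestre's mollified fundamental solution: it quotes his Proposition 2.22 (Proposition \ref{2.2.13} here) to write $u=u*\gamma_\lambda$ with $\lambda=r/2$ on the $\sigma$-harmonicity set, where $\gamma_\lambda=(-\Delta)^\sigma\Gamma_\lambda$ is a fixed $C^\infty$ kernel whose far-field decay $|\nabla^k\gamma_\lambda(z)|\le C\lambda^{2\sigma}|z|^{-n-2\sigma-k}$ is read off from the representation in his Proposition 2.12 (Proposition \ref{2.2.3} here); differentiating the convolution and splitting near/far then gives \eqref{schaud2} with essentially no further work, and the scaling $\gamma_\lambda(x)=\lambda^{-n}\gamma_1(x/\lambda)$ handles the near part. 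You instead start from the fractional Poisson kernel for the ball, and your central device --- averaging the Poisson identity over the radius $R\in(5/4,3/2)$ and applying Fubini so that the boundary singularity $(|y|^2-R^2)^{-\sigma}$ is absorbed into an averaged kernel $G(|x|,|y|)\,|x-y|^{-n}$ that is uniformly bounded near the ring and decays like $|y|^{-n-2\sigma}$ at infinity --- is a clean and self-contained substitute for Silvestre's mollification, and plays exactly the same structural role. The trade-off is that you must first justify the Poisson representation for a distributional solution in $L_\sigma$, which requires continuity of $u$ and a nonlocal maximum-principle uniqueness step, whereas the paper just quotes two ready-made propositions. One genuine soft spot in your write-up is the qualitative $C^\infty$ argument: the identity $\phi u=I_{2\sigma}*(-\Delta)^\sigma(\phi u)$ involves $(-\Delta)^\sigma(\phi u)$ globally, but your cutoff computation only identifies it on $B_1$, so the tail of the Riesz potential is not controlled and the smoothness of $u$ does not follow directly as stated. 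The simplest repair is to quote Silvestre's continuity/mean-value result to start the bootstrap (as the paper does), after which your averaged Poisson formula delivers both the higher differentiability and the quantitative bound, and the rescaling reduction to $r=1$ is exactly right.
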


Notice that the right-hand sides of \eqref{schaud2} are equivalent to $C_{n,\sigma,k,\alpha,r}\|u\|_{L_\sigma}$ for every fixed $r$ and, although this term is more compact, it is not scale invariant with respect to $r$.

For the proof of this proposition we will use a couple of results from \cite{Sil}. Following the notations of Silvestre \cite{Sil} we set $\Phi(x)= \frac{C_{n,\sigma}}{|x|^{n-2\sigma}}$ the fundamental solution of $(-\Delta)^{\sigma}$ and we construct $\Gamma$ from $\Phi$ by modifying $\Phi$ only in $B_1$ so that $\Gamma\in C^{\infty}(\R{n})$. Via a rescaling, we consider for $\lambda>0$ the function 
$$\Gamma_{\lambda}(x)=\frac{1}{\lambda^{n-2\sigma}}\Gamma\left(\frac{x}{\lambda}\right),$$
and also define $\gamma_{\lambda}(x):=(-\Delta)^{\sigma} \Gamma_{\lambda}(x)$. Notice that
\begin{equation}\label{gamma1}
\gamma_\lambda(x)=\frac{1}{\lambda^n}\gamma_1\left(\frac{x}{\lambda}\right).
\end{equation}
By \cite[Prop. 2.7]{Sil} $\gamma_\lambda\in C^\infty(\R{n})$. 
We will need the following two results:
\begin{prop}[\cite{Sil}, Prop. 2.12]\label{2.2.3}
For $|x|>\lambda$, we have
\begin{equation}\label{gammalambda}
\gamma_{\lambda}(x)=\int_{B_{\lambda}(0)}\frac{\Phi(y)-\Gamma_{\lambda}(y)}{|x-y|^{n+2\sigma}}dy.
\end{equation}
\end{prop}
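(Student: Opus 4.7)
My plan is to exploit the fact that $\Gamma_\lambda$ coincides with $\Phi$ outside the ball $B_\lambda$. First I would verify this from the construction: since $\Gamma=\Phi$ on $\R{n}\setminus B_1$ by definition and $\Phi$ is homogeneous of degree $-(n-2\sigma)$, the scaling $\Gamma_\lambda(x)=\lambda^{-(n-2\sigma)}\Gamma(x/\lambda)$ together with $|x/\lambda|>1$ yields $\Gamma_\lambda(x)=\lambda^{-(n-2\sigma)}\Phi(x/\lambda)=\Phi(x)$ whenever $|x|>\lambda$. In particular the defect $h_\lambda:=\Phi-\Gamma_\lambda$ is locally integrable and supported in $\overline{B_\lambda}$.

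Next I would compute $\gamma_\lambda(x)=(-\Delta)^\sigma \Gamma_\lambda(x)$ pointwise for $|x|>\lambda$ via the singular integral representation \eqref{fraclapl}, which applies since $\Gamma_\lambda\in C^\infty(\R{n})\cap L_\sigma(\R{n})$ (smoothness is built into the construction of $\Gamma$ and the $L_\sigma$-decay at infinity is inherited from that of $\Phi$). Substituting $\Gamma_\lambda(x)=\Phi(x)$ into the constant term inside the principal value, adding and subtracting $\Phi(y)$ in the numerator, I would split
\[
\gamma_\lambda(x)=C_{n,\sigma}\,\mathrm{P.V.}\!\int_{\R{n}}\frac{\Phi(x)-\Phi(y)}{|x-y|^{n+2\sigma}}\,dy\;+\;C_{n,\sigma}\int_{B_\lambda}\frac{\Phi(y)-\Gamma_\lambda(y)}{|x-y|^{n+2\sigma}}\,dy.
\]
The first integral is precisely $(-\Delta)^\sigma\Phi(x)$, which vanishes pointwise for $x\neq 0$ because $\Phi$ is the fundamental solution of $(-\Delta)^\sigma$ and its distributional source $\delta_0$ is concentrated at the origin. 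The second integral is already restricted to $B_\lambda$ since $h_\lambda$ vanishes outside, and no principal value is needed there because $|x-y|\ge |x|-\lambda>0$. This yields \eqref{gammalambda}, up to absorbing $C_{n,\sigma}$ into the normalization of $\Phi$ used in \cite{Sil}.

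The main delicate point will be to justify that the two pieces above converge separately, so that the algebraic rearrangement is legitimate. I would check that the mild singularity of $\Phi$ at $y=0$ is locally integrable thanks to $\sigma>0$ (the integrand is of order $|y|^{-(n-2\sigma)}$ near the origin), that the decay at infinity is of order $|y|^{-2n}$ and hence integrable, and that the singularity at $y=x$ is handled by the principal value in the first integral and is simply absent from the second since $x\notin \overline{B_\lambda}$. With these absolute-integrability checks in place the splitting is valid and the identity follows.
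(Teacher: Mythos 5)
Your proof is correct and follows the natural route; the paper does not reproduce a proof (it only cites Silvestre's Prop.\ 2.12), but the expected argument is exactly yours: write $\Gamma_\lambda = \Phi + (\Gamma_\lambda - \Phi)$, observe that the correction term is locally integrable, supported in $\overline{B_\lambda}$, and bounded away from $x$ when $|x|>\lambda$, and use that $(-\Delta)^\sigma\Phi$ vanishes pointwise away from the origin (since $\Phi$ is smooth there and $(-\Delta)^\sigma\Phi=\delta_0$ distributionally). Two small remarks. First, your derivation produces an extra multiplicative constant $C_{n,\sigma}$ (the normalization from \eqref{fraclapl}) in front of the right-hand side of \eqref{gammalambda}, whereas the statement as quoted has none; you acknowledge this normalization discrepancy, and it is harmless in this paper since the only downstream use of \eqref{gammalambda}, in the proof of Proposition \ref{propschauder}, passes through an unspecified constant $C_{n,\sigma,k}$ anyway. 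Second, a minor slip in the convergence check: at infinity the integrand of the $(-\Delta)^\sigma\Phi(x)$ piece decays like $|y|^{-(n+2\sigma)}$, driven by the $\Phi(x)/|x-y|^{n+2\sigma}$ contribution, rather than $|y|^{-2n}$ as you state (which is the decay of the $\Phi(y)/|x-y|^{n+2\sigma}$ contribution); both are integrable for $\sigma>0$, so the splitting is still legitimate and the conclusion stands.
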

\begin{prop}[\cite{Sil}, Prop. 2.22]\label{2.2.13}
Assume that $u\in L_{\sigma}(\R{n})$ such that $(-\Delta)^{\sigma}u=0$ in $\Omega\subset\R{n}$. Then $u\in C^0(\Omega)$ and $u(x)=u*\gamma_{\lambda}(x)$ for every $x\in \Omega$ and $\lambda\in (0,\mathrm{dist}(x,\de\Omega))$.
\end{prop}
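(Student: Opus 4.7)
The statement has two parts, continuity $u \in C^0(\Omega)$ and the reproducing identity $u = u*\gamma_\lambda$; I would deduce the former from the latter and focus the argument on the identity. First note that $\gamma_\lambda \in C^\infty(\R{n})$ (since $\Gamma_\lambda \in C^\infty$) and that Proposition \ref{2.2.3} yields the decay $|\gamma_\lambda(x)| \lesssim |x|^{-(n+2\sigma)}$ for $|x| \geq 2\lambda$: bound $|x-y| \geq |x|/2$ in the integrand for $y \in B_\lambda$ and use $\Phi - \Gamma_\lambda \in L^1(B_\lambda)$. Hence for $u \in L_\sigma(\R{n})$ the convolution $(u * \gamma_\lambda)(x_0)$ is absolutely convergent and continuous in $x_0$, so once $u(x_0) = (u*\gamma_\lambda)(x_0)$ is established on $\Omega_\lambda := \{x \in \Omega : \mathrm{dist}(x, \partial\Omega) > \lambda\}$ for each admissible $\lambda$, continuity of $u$ on $\Omega = \bigcup_{\lambda>0} \Omega_\lambda$ follows automatically.

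For the reproducing identity, fix $x_0 \in \Omega$ and $\lambda \in (0, \mathrm{dist}(x_0, \partial\Omega))$, and introduce the auxiliary singular ``test function''
$$H(y) := \Phi(x_0 - y) - \Gamma_\lambda(x_0 - y),$$
which is supported in $\overline{B_\lambda(x_0)} \subset \Omega$ (because $\Gamma \equiv \Phi$ outside $B_1$), locally integrable, and has the $\Phi$-singularity at $x_0$. Since $\Phi$ is the fundamental solution of $(-\Delta)^\sigma$ and $\gamma_\lambda = (-\Delta)^\sigma \Gamma_\lambda$ by definition, we have the distributional identity
$$(-\Delta)^\sigma H = \delta_{x_0} - \gamma_\lambda(x_0 - \cdot).$$
If the duality $\int u\cdot(-\Delta)^\sigma H = \int (-\Delta)^\sigma u \cdot H$ were directly applicable, we would read off $u(x_0) - (u*\gamma_\lambda)(x_0) = 0$ immediately, since $H$ is supported where $(-\Delta)^\sigma u = 0$. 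The entire task is to make this formal argument rigorous.

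I would do this through two regularization layers. First, mollify $u$ by $u_\delta := u * \rho_\delta$, so that $u_\delta$ is $C^\infty$ and $(-\Delta)^\sigma u_\delta = 0$ in $\Omega_\delta$ (convolution against a smooth compactly supported kernel commutes with $(-\Delta)^\sigma$ on tempered distributions, as defined by \eqref{lapldual}). Second, truncate the singularity of $H$ at $x_0$ by setting $H^\epsilon(y) := \chi_\epsilon(y - x_0) H(y) \in C_c^\infty(\Omega)$, where $\chi_\epsilon \in C^\infty(\R{n})$ vanishes on $B_\epsilon$ and equals $1$ outside $B_{2\epsilon}$. The distributional equation for $u_\delta$ gives
$$\int_{\R{n}} u_\delta(y) \, (-\Delta)^\sigma H^\epsilon(y)\,dy = 0 \quad \text{for every } \epsilon > 0.$$
Splitting $(-\Delta)^\sigma H^\epsilon$ into the $\Phi$-part and the $\Gamma_\lambda$-part and letting $\epsilon \to 0$: the $\Gamma_\lambda$-piece converges pointwise on $\R{n}\setminus\{x_0\}$ and with integrable majorant to $-\gamma_\lambda(x_0 - \cdot)$; the $\Phi$-piece converges to $0$ pointwise off $x_0$ but concentrates near $x_0$ with total mass tending to $1$ (by $(-\Delta)^\sigma \Phi = \delta_0$), thus acting as an approximate identity and producing $u_\delta(x_0)$ upon pairing with the continuous $u_\delta$. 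Combining yields $u_\delta(x_0) = (u_\delta * \gamma_\lambda)(x_0)$ on $\Omega_\delta$. Finally, letting $\delta \to 0$, the LHS tends to $u(x_0)$ for a.e.\ $x_0$ (Lebesgue differentiation) and the RHS to $(u*\gamma_\lambda)(x_0)$ by dominated convergence, using $u \in L_\sigma$ and the decay of $\gamma_\lambda$; continuity of the RHS then extends the identity to all of $\Omega_\lambda$.

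The principal obstacle is the limit analysis of $(-\Delta)^\sigma H^\epsilon$ as $\epsilon \to 0$: securing a dominating integrable majorant for the $\Gamma_\lambda$-piece over all $\epsilon$, and establishing the approximate-identity behavior of the $\Phi$-piece near $x_0$. This is precisely where the normalization $(-\Delta)^\sigma \Phi = \delta_0$ is used together with the local smoothness of $u_\delta$, which guarantees that the $\delta_{x_0}$-like contribution of the $\Phi$-piece evaluates $u_\delta$ at $x_0$.
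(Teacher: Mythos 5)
The paper does not reprove this statement; it is cited as Proposition~2.22 of Silvestre~\cite{Sil}, with the remark that the proof there carries over with only cosmetic changes to the modified $\Gamma$. So the comparison has to be with Silvestre's argument, whose essence your proposal does capture: use the compactly supported, singular kernel $H(y)=\Phi(x_0-y)-\Gamma_\lambda(x_0-y)$ as a ``test function'' so that $(-\Delta)^\sigma H=\delta_{x_0}-\gamma_\lambda(x_0-\cdot)$ and the duality pairing against $(-\Delta)^\sigma u=0$ yields $u(x_0)=(u*\gamma_\lambda)(x_0)$. That is the right skeleton.

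However, your choice of regularization introduces a genuine gap. You regularize $H$ by \emph{truncation} near $x_0$ ($H^\epsilon=\chi_\epsilon H$) and then claim that the ``$\Phi$-piece'' $(-\Delta)^\sigma\bigl(\chi_\epsilon\Phi(x_0-\cdot)\bigr)$ acts as an approximate identity with mass tending to~$1$ near $x_0$. This is not established and is far from obvious. The function $\chi_\epsilon\Phi(x_0-\cdot)$ has $C^2$-norm of order $\epsilon^{2\sigma-n-2}$ on the annulus $\{\epsilon\le|y-x_0|\le 2\epsilon\}$, so the pointwise fractional Laplacian there can be as large as $\epsilon^{-n}$, and it need not be of one sign; on the other hand, for $|y-x_0|>3\epsilon$ one gets a nonnegative tail of size $\epsilon^{2\sigma}|y-x_0|^{-n-2\sigma}$ whose integral over that region is $O(1)$ but \emph{does not} tend to~$0$. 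So the mass is spread between a scale-$\epsilon$ ``core'' of indeterminate sign and a scale-$\epsilon$ positive tail, and the convergence $\int u_\delta(-\Delta)^\sigma(\chi_\epsilon\Phi(x_0-\cdot))\,dy\to u_\delta(x_0)$, which is precisely what you need, is the content of a nontrivial lemma that your sketch assumes rather than proves. Without it, the equation $\int u_\delta(-\Delta)^\sigma H^\epsilon=0$ only gives $\int u_\delta A_\epsilon=\int u_\delta B_\epsilon$, which by itself does not identify either limit as $u_\delta(x_0)$.

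The fix is to regularize $H$ by \emph{mollification} rather than truncation, which also makes the outer mollification of $u$ unnecessary. Since $H\in L^1(\R{n})$ is compactly supported in $\overline{B_\lambda(x_0)}\subset\Omega$, set $H_\tau:=H*\rho_\tau\in C_c^\infty(\Omega)$ for $\tau$ small. Then, because convolution commutes with $(-\Delta)^\sigma$ on tempered distributions,
$$(-\Delta)^\sigma H_\tau=\bigl((-\Delta)^\sigma H\bigr)*\rho_\tau=\rho_\tau(\cdot-x_0)-\bigl(\gamma_\lambda(x_0-\cdot)\bigr)*\rho_\tau,$$
an explicit smooth function. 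Plugging $\varphi=H_\tau$ directly into the distributional equation for $u$ gives
$$0=\int_{\R{n}} u\,(-\Delta)^\sigma H_\tau\,dy=(u*\rho_\tau)(x_0)-\bigl(u*\gamma_\lambda*\rho_\tau\bigr)(x_0).$$
Letting $\tau\to 0$ yields $u(x_0)=(u*\gamma_\lambda)(x_0)$ at every Lebesgue point $x_0$ (using that $u*\gamma_\lambda$ is continuous, by the smoothness and decay of $\gamma_\lambda$ from Proposition~\ref{2.2.3}); since $u*\gamma_\lambda\in C^0$, this identifies the continuous representative. This avoids both the approximate-identity estimate and the $\delta$-layer entirely, and is essentially Silvestre's route. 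I would rewrite the proof this way; as it stands the $\epsilon\to 0$ step is not a proof.
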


We remark that, although our definition of $\Gamma$ (hence of $\Gamma_\lambda$ and $\gamma_\lambda$) is slightly different from the one in \cite{Sil}, the proofs of the above propositions go through with almost no change.

\begin{proof}[Proof of Proposition \ref{propschauder}] 
The proof uses Proposition \ref{2.2.13} and a standard convolution argument. For every $k\in\mathbb{N}\cup\{0\}$, we have from Proposition \ref{2.2.13} that $\nabla^k u=u*\nabla^k \gamma_\lambda$ (we use the notation that $\nabla^0$ is the identity operator) in $B_r$ for $\lambda= r/2$. Hence, for $x\in B_r$,
$$|\nabla^k u(x)|\leq \int_{\mathbb{R}^{n}\setminus B_{2r}}|u(y)| |\nabla^k\gamma_{\lambda}(x-y)|dy+\int_{B_{2r}}|u(y)| |\nabla^k\gamma_{\lambda}(x-y)|dy=:I+II.$$
Notice that
\[
\frac{1}{|x-y-z|^{n+2\sigma+k}}\le \frac{1}{(|y|-r-\lambda)^{n+2\sigma+k}}\le \frac{C_{n,\sigma,k}}{|y|^{n+2\sigma+k}}, \quad |y|>2r, |x|<r,|z|<\lambda= \frac{r}{2}.
\]
Then we have, by differentiating \eqref{gammalambda},
$$|\nabla^k\gamma_\lambda(x-y)|\le C_{n,\sigma,k}\int_{B_\lambda}\frac{|\Phi(z)-\Gamma_\lambda(z)|}{|x-y-z|^{n+2\sigma+k}}dz\le \frac{C_{n,\sigma,k} \lambda^{2\sigma}}{|y|^{n+2\sigma+k}},\quad |y|>2r, |x|<r,\lambda= \frac{r}{2}.$$
It follows that
$$I\le C_{n,\sigma} r^{2\sigma-k} \int_{\R{n}\setminus B_{2r}}\frac{|u(y)|}{|y|^{n+2\sigma}} dy.$$
As for $II$, notice that \eqref{gamma1} implies $\nabla^k \gamma_\lambda =\lambda^{-n-k}\nabla^k\gamma_1\big(\frac{x}{\lambda}\big)$, from which one bounds
$$II\le C_{n,\sigma,k}\|\nabla^k\gamma_{r/2}\|_{L^\infty}\int_{B_{2r}} |u(y)|dy \le \frac{C_{n,\sigma,k}}{r^{n+k}} \|u\|_{L^1(B_{2r})}.$$
The proof of \eqref{schaud2} is completed.
\end{proof}

\begin{cor}\label{corschauder} Suppose $u\in L_\sigma(\R{n})$ for some $\sigma\in (0,1)$ and $(-\Delta)^\sigma u=f$ in $B_2$ for some $f\in C^{k,\alpha}(B_2)$, where $\alpha\in (0,1), k\in \mathbb{N}\cup\{0\}$ and $\alpha+2\sigma$ is not an integer. Then $u\in C^{k,\alpha+2\sigma}(B_1)$ ($C^{k,\beta}(B_1)= C^{k+1, \beta-1}(B_1)$ if $\beta>1$). Moreover, 
\[
\|u\|_{C^{k,\alpha+2\sigma}(B_1)}\le C_{n,\sigma,k}\left( \int_{\R{n}}\frac{|u(x)|}{1+|x|^{n+2\sigma}}dx+\|f\|_{C^{k,\alpha}(B_2)}\right),
\]
where $C_{n,\sigma,k}$ is a positive constant depending only on $n,\sigma$ and $k$.
\end{cor}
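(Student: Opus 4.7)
The plan is to decompose $u=v+w$, where $w$ is a particular solution carrying the inhomogeneity $f$ and $v$ is $(-\Delta)^\sigma$-harmonic on a ball slightly larger than $B_1$, so that Proposition \ref{propschauder} can be applied directly to $v$.

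Fix a smooth cutoff $\eta\in C^\infty_c(B_{7/4})$ with $\eta\equiv 1$ on $B_{3/2}$ and set $\tilde f:=\eta f$, extended by zero; then $\|\tilde f\|_{C^{k,\alpha}(\R{n})}\le C\|f\|_{C^{k,\alpha}(B_2)}$. Define the Riesz-potential particular solution
\[
w(x):=c_{n,\sigma}\int_{\R{n}}\frac{\tilde f(y)}{|x-y|^{n-2\sigma}}\,\ud y,
\]
with the normalization chosen so that $(-\Delta)^\sigma w=\tilde f$ on $\R{n}$ (the higher-dimensional analogue of Lemma \ref{lemmafund}). Since $\tilde f$ has compact support, $w$ is bounded and decays like $|x|^{-(n-2\sigma)}$ at infinity, so in particular $w\in L_\sigma(\R{n})$. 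The classical H\"older-regularity estimate for Riesz potentials of compactly supported H\"older functions (see, e.g., \cite{Sil} or Stein's monograph) then gives $w\in C^{k,\alpha+2\sigma}(\R{n})$ with
\[
\|w\|_{C^{k,\alpha+2\sigma}(\R{n})}+\|w\|_{L_\sigma(\R{n})}\le C_{n,\sigma,k}\|f\|_{C^{k,\alpha}(B_2)}.
\]

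Now put $v:=u-w\in L_\sigma(\R{n})$; by construction $(-\Delta)^\sigma v=f-\tilde f=0$ in $B_{3/2}$. Covering $\overline{B_1}$ by balls $B_{1/4}(x_0)\subset B_{3/2}$ and applying Proposition \ref{propschauder} at each such $x_0$ (with $r=1/8$) shows that $v\in C^\infty(B_1)$ and
\[
\|\nabla^j v\|_{L^\infty(B_1)}\le C_{n,\sigma,j}\bigl(\|v\|_{L_\sigma(\R{n})}+\|v\|_{L^1(B_{3/2})}\bigr),\quad j\ge 0.
\]
Both right-hand side terms are bounded by $\|u\|_{L_\sigma(\R{n})}+\|w\|_{L^\infty(\R{n})}$, which is in turn dominated by the right-hand side of the claimed estimate. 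Adding this to the bound on $w$ and writing $u=v+w$ yields the corollary; the convention $C^{k,\beta}=C^{k+1,\beta-1}$ for $\beta>1$ merely allows the regularity gain to be read as an additional classical derivative.

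The hard part is the H\"older-gain estimate for the Riesz potential: for $\alpha+2\sigma\le 1$ one directly estimates the modulus of continuity of the Riesz potential $I_{2\sigma}\tilde f$, while for $\alpha+2\sigma>1$ one must differentiate under the integral sign and control the resulting principal-value integrals via cancellation. The borderline case $\alpha+2\sigma\in\mathbb{N}$ produces a logarithmic loss, which is precisely why that value is excluded from the hypothesis. Once this classical smoothing estimate is in hand, the rest of the argument is a routine combination of cutoff, Riesz-potential inversion, and the interior regularity already established in Proposition \ref{propschauder}.
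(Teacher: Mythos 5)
Your decomposition $u = v + w$, with $w = \Phi * (\eta f)$ the Riesz potential of the cut-off data (carrying the H\"older gain, via the classical fact that $\Phi*(\cdot)$ maps $C^{k,\alpha}_c$ into $C^{k,\alpha+2\sigma}_{\loc}$ when $\alpha+2\sigma\notin\mathbb{N}$) and $v$ an $(-\Delta)^\sigma$-harmonic remainder handled by Proposition~\ref{propschauder}, is precisely the strategy of Proposition~2.8 in \cite{Sil}, which is exactly what the paper invokes for this corollary. The argument is correct and complete modulo that cited Riesz-potential smoothing estimate, so it is essentially the same proof the paper has in mind.
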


\begin{proof} This can be proven similarly as in Proposition 2.8 of \cite{Sil}, by using the estimates in Proposition \ref{propschauder}.
\end{proof}

\noindent T. Jin

{\small
\noindent Department of Mathematics, The University of Chicago\\
5734 S. University Avenue, Chicago, IL 60637, USA\\
Email: \textsf{tj@math.uchicago.edu}
}
\medskip

\noindent A. Maalaoui

{\small
\noindent  Department of Mathematics and Computer Science, University of Basel\\
Rheinsprung 21 CH-4051, Basel, Switzerland \\
Email: \textsf{ali.maalaoui@unibas.ch}
}
\medskip

\noindent L. Martinazzi

{\small
\noindent  Department of Mathematics and Computer Science, University of Basel\\
Rheinsprung 21 CH-4051, Basel, Switzerland \\
Email: \textsf{luca.martinazzi@unibas.ch}
}

\medskip

\noindent J. Xiong

\small{
\noindent Beijing International Center for Mathematical Research, Peking University\\
Beijing 100871, China\\
Email: \textsf{jxiong@math.pku.edu.cn}
}
\end{document}